\documentclass[12pt]{amsart}


\oddsidemargin=17pt \evensidemargin=17pt
\headheight=9pt     \topmargin=26pt
\textheight=576pt   \textwidth=433.8pt

\usepackage{amsfonts,amssymb,graphicx,bm,enumerate}
\usepackage[small,heads=LaTeX,PS,nohug]{diagrams}

\newcommand{\excise}[1]{}

\numberwithin{equation}{section}
\pushQED{\qed}

\newtheorem{thm}{Theorem}[section]
\newtheorem{lemma}[thm]{Lemma}
\newtheorem{prop}[thm]{Proposition}
\newtheorem{cor}[thm]{Corollary}

\theoremstyle{definition}
\newtheorem{defn}[thm]{Definition}
\newtheorem{remark}[thm]{Remark}
\newtheorem{example}[thm]{Example}

\newcommand\BB{\mathbb{B}}
\newcommand\CC{\mathbb{C}}
\newcommand\EE{\mathbb{E}}
\newcommand\GG{\mathbb{G}}
\newcommand\II{\mathcal{I}}

\newcommand\NN{\mathbb{N}}
\newcommand\OO{\mathcal{O}}
\newcommand\PP{\mathbb{P}}
\newcommand\QQ{\mathbb{Q}}
\newcommand\RR{\mathbf{R}}
\newcommand\TT{\mathbb{T}}
\newcommand\UU{\mathbb{U}}
\newcommand\WW{\mathcal{W}}
\newcommand\XX{\mathcal{X}}
\newcommand\YY{\mathcal{Y}}
\newcommand\ZZ{\mathbb{Z}}

\newcommand\cE{\mathcal{E}}
\newcommand\cF{\mathcal{F}}
\newcommand\cG{\mathcal{G}}
\newcommand\cL{\mathcal{L}}
\newcommand\cM{\mathcal{M}}

\newcommand\fg{\mathfrak{g}}

\newcommand\PPJ{{\mathbb{P}_{\!J}}}
\newcommand\bbb{\mathfrak{b}}
\newcommand\del{\partial}
\newcommand\too{\longrightarrow}
\newcommand\ttt{\mathfrak{t}}
\newcommand\isom{\cong}
\newcommand\ldot{\,.\,}
\newcommand\minus{\smallsetminus}
\newcommand\goesto{\rightsquigarrow}

\newcommand\mapstoo{\longmapsto}

\newcommand\<{\langle}
\renewcommand\>{\rangle}

\renewcommand\th{\mathrm{th}}
\renewcommand\phi{\varphi}
\renewcommand\hat{\widehat}
\renewcommand\iff{\Leftrightarrow}

\newcommand\ol[1]{{\overline{#1}}{}}
\newcommand\wt[1]{{\widetilde{#1}}{}}


\DeclareMathOperator\Hom{Hom}
\DeclareMathOperator\Tor{\mathcal{T}\hspace{-.8ex}\mathit{or}}
\DeclareMathOperator\shfHom{\mathcal{H}\hspace{-.2ex}\mathit{om}}

\DeclareMathOperator\codim{codim}

\begin{document}

\title[Positivity and Kleiman transversality in equivariant $K$-theory]{Positivity and
       Kleiman transversality in equivariant $\bm{K}$-theory of homogeneous spaces}
\author{Dave Anderson}
\address{Department of Mathematics\\University of Michigan\\Ann Arbor, MI 48109}
\email{dandersn@umich.edu}
\author{Stephen Griffeth}
\address{School of Mathematics\\University of Minnesota\\Minneapolis, MN 55455}
\email{griffeth@math.umn.edu}
\author{Ezra Miller}
\address{School of Mathematics\\University of Minnesota\\Minneapolis, MN 55455}
\email{ezra@math.umn.edu}

\begin{abstract}
We prove the conjectures of Graham--Kumar \cite{GrKu} and
Griffeth--Ram \cite{GrRa} concerning the alternation of signs in the
structure constants for torus-equivariant $K$-theory of generalized
flag varieties $G/P$.  These results are immediate consequences of an
equivariant homological Kleiman transversality principle for the Borel
mixing spaces of homogeneous spaces, and their subvarieties, under a
natural group action with finitely many orbits.  The computation of
the coefficients in the expansion of the equivariant $K$-class of a
subvariety in terms of Schubert classes is reduced to an Euler
characteristic using the homological transversality theorem for
non-transitive group actions due to S.~Sierra.  A vanishing theorem,
when the subvariety has rational singularities, shows that the Euler
characteristic is a sum of at most one term---the top one---with a
well-defined sign.  The vanishing is proved by suitably modifying a
geometric argument due to M.~Brion in ordinary $K$-theory that brings
Kawamata--Viehweg vanishing to bear.
\end{abstract}

\keywords{flag variety, equivariant $K$-theory, Kleiman
transversality, homological transversality, Schubert variety, Borel
mixing space, rational singularities, Bott--Samelson resolution}

\date{10 March 2017}

\maketitle

\section{Introduction}\label{s:intro}

The structure constants of cohomology rings of homogeneous spaces tend
to exhibit positivity properties.  Combinatorics often enters, through
attempts to interpret positive quantities as enumerators, but it is by
geometric means that the positivity is often first---or most
easily---verified (a notable exception being the order of events
relating \cite{buchLR} and \cite{brion}).  In the typical setup, going back to Ehresmann
\cite{Ehr}, the cohomology ring in question possesses an additive
basis of classes carried by algebraic subvarieties.  Using the
transitive group action, as pioneered by Kleiman \cite{Kle}, these
Schubert subvarieties can be translated generically; subsequently
intersecting them yields cycles whose multiplicities are positive by
virtue of being~algebraic.  These multiplicities are the structure
constants, which are hence~positive.

That positivity extends beyond ordinary cohomology has recently been
demonstrated in two instances.  Graham generalized it to
torus-equivariant cohomology of the homogeneous spaces $G/P$
\cite{graham}, confirming and extending conjectures of Billey and
Peterson (cf.\ \cite[Section~4]{graham}).  At about the same time,
Brion proved it for ordinary $K$-theory \cite{brion}, after it had
been conjectured by Buch \cite{buchLR}.  The very notion of positivity
depends on the context, of course.  In $K$-theory, positivity means
sign alternation: if the dimension of a subvariety differs from the
expected dimension by~$i$ in a given intersection product, then the
sign of the coefficient on its class is~$(-1)^i$.  For equivariant
cohomology, the coefficients are polynomials; positivity means that,
expressed as polynomials in the simple roots, the coefficients are
nonnegative integers.

\raggedbottom%
In view of the developments for ordinary $K$-theory and equivariant
cohomology, it should come as no surprise that conjectures for
equivariant $K$-theory, posed by Griffeth--Ram \cite{GrRa} and
Graham--Kumar \cite{GrKu}, predict sign alternation in equivariant
$K$-theory.  These conjectures again make precise the notion of
positivity for polynomials in terms of which the alternation is
phrased.  The aim of this paper is to derive these conjectures
(Corollaries~\ref{c:GrKu}, \ref{c:GrKu'}, and~\ref{c:GrRa}) from an
appropriate generalization of Kleiman transversality
(Theorem~\ref{t:strong}).

Several special cases of our main results have been proved over the
last two decades.  The first steps in this direction concerned
multiplication by the class of a line bundle: positive ``Pieri''
formulas in equivariant $K$-theory were given by Fulton--Lascoux in
type $A$ \cite{fulton-lascoux}, and by Pittie--Ram \cite{pittie-ram}
and Mathieu \cite{Mat} for general $G/B$.  Griffeth--Ram verified
their conjecture for all rank $2$ groups, and Graham--Kumar proved
their conjecture for projective spaces, as well as Schubert expansions
of opposite Schubert varieties in any $G/B$.

Ordinary Kleiman transversality concerns the movement of subvarieties
of a homogeneous space $G/P$ into general position using the
transitive group action.  This has positivity consequences for
non-equivariant cohomology theories, including ordinary $K$-theory
of~$G/P$, because translation preserves rational equivalence.
Equivariantly, on the other hand, translation alters the classes of
cycles.  Our equivariant homological Kleiman transversality principle,
Theorem~\ref{t:kleiman}, and its strong version for subvarieties with
rational singularities, Theorem~\ref{t:strong}, therefore take place
on a Borel mixing space of~$G/P$---or, more precisely, a
finite-dimensional approximation~$\XX$---whose ordinary (i.e.,
non-equivariant) homological invariants are the equivariant invariants
of~$G/P$.

Roughly speaking, Theorems~\ref{t:strong} and~\ref{t:kleiman} say that
$\XX$ has a large group action, large enough so that the general
translate of the mixing space~$\YY$ of a torus-stable subvariety~$Y$
(or arbitrary coherent sheaf on~$G/P$) is homologically transverse
in~$\XX$ to the mixing spaces~$\XX^w$ of the opposite Schubert
subvarieties $X^w \subseteq G/P$.  Consequently, each coefficient in
the Schubert basis expansion of the torus-equivariant $K$-class
$[\OO_Y]$ can be computed as the Euler characteristic of a certain
``boundary'' divisor on the intersection $\YY \cap \gamma.\XX^w_J$
of the mixing space~$\YY$ with a general translate of another mixing
space~$\XX^w_J$ (Theorem~\ref{t:kleiman}).  To be more precise, mixing
spaces are bundles over a product~$\PP$ of large projective spaces
(Section~\ref{s:flag}), and $\XX^w_J$ is the restriction to a certain
product of projective subspaces, indexed by~$J$, of the mixing space
$\XX^w$.  The strong version in Theorem~\ref{t:strong} says that when
$Y$ has rational singularities, the Euler characteristic is an
alternating sum of terms in which only the last can be nonzero.

The action of the \emph{mixing group}~$\Gamma$, introduced in
Section~\ref{s:group}, is derived from the structure of the mixing
space~$\XX$ as a bundle with fiber $G/P$ over~$\PP$.  The base~$\PP$
has a transitive automorphism group.  The fibers $G/P$ obviously also
have transitive automorphism groups, but a~priori these only guarantee
automorphisms of~$\XX$ over open subsets of~$\PP$.  Constructing
global automorphisms depends on constructing sections of a group
scheme related to~$\XX$.  This, in turn, ultimately relies on a
certain positivity hypothesis on the torus action
(Section~\ref{sub:full}) that pervades all of our main results.

In view of the applications in Section~\ref{s:applications}, the
statements of our main results, particularly Theorems~\ref{t:main}
and~\ref{t:strong}, as derived from Theorems~\ref{t:kleiman}
and~\ref{t:vanishing}, contain always two flavors: one expands
equivariant classes in terms of equivariant Schubert structure sheaves
$\OO_w = [\OO_{X_w}]$, while the other expands in terms of Schubert
interiors $\xi_w = [\OO_{X_w}(-\del X_w)]$.  The boundary divisor
$\del X_w$ is the union of the Schubert varieties properly contained
in~$X_w$.  For positivity in the latter case, it does not suffice to
start with the structure sheaf~$\OO_Y$ of a subvariety with rational
singularities; only a twist $\OO_Y(-\del Y)$ by the ideal sheaf of an
effective boundary divisor supporting an ample line bundle will do.
The two flavors have nearly identical proofs: the nuanced differences
in the statements result from a symmetry between the opposite Schubert
variety $X^w$ and and arbitrary subvariety~$Y$ with rational
singularities; see the proof of Theorem~\ref{t:vanishing}.

The outline of our method comes from a combination of Anderson's proof
\cite{and} of Graham's equivariant cohomological positivity
\cite{graham} and Brion's proof of sign alternation for ordinary
$K$-theory \cite{brion}.  First, we translate equivariant statements
on $G/P$ into non-equivariant ones on the finite-dimensional
approximations of Borel mixing spaces in Section~\ref{sub:borel} and
Section~\ref{s:approx}.  After stating our main results and their
previously conjectured corollaries in Sections~\ref{s:main}
and~\ref{s:applications}, we construct the ``sufficiently transitive''
group action on the mixing space in Section~\ref{s:group}.  This
results in the weak Kleiman transversality principle in
Section~\ref{s:transverse}.  The difference between the weak version
and its strong one is the vanishing result in
Section~\ref{s:vanishing}, particularly Theorem~\ref{t:vanishing}.
The proof requires a result on lifting rational singularities under
smooth morphisms in Section~\ref{s:rat-sings}, along with explicit
constructions of such smooth morphisms, based on Bott--Samelson
resolutions of singularities, in Section~\ref{s:bs}.

What makes things simpler in cohomology, as opposed to $K$-theory, is
that cohomology only requires knowledge on a Zariski dense open
subset.  Each of the relevant cohomology computations \cite{and} is
carried by an intersection that occurs in one cell of a paving of the
mixing space by bundles of affine spaces.  As the group action is
transitive on each such bundle, ordinary Kleiman transversality
suffices.  One must then push down to the base of the mixing space,
but this operation transfers the positivity to the resulting class.

What fails in $K$-theory?  First, unable to restrict to an open cell,
we must instead attend to coherent sheaves on closed subvarieties,
where the group action is not transitive.  Second, pushing forward to
the base can have higher direct images, a~priori causing mixed
negative and positive coefficients.

Getting around the second obstacle is simple, in principle: impose
vanishing of the higher direct images.  In practice, this is
accomplished by stipulating rational singularities, which the
Graham--Kumar conjecture \cite[Conjecture~7.1]{GrKu} does explicitly,
taking the cue from Brion's phrasing of the result in ordinary
$K$-theory \cite[Theorem~1]{brion}.  We proceed by suitably modifying
Brion's geometric argument that brings Kawamata--Viehweg vanishing to
bear.

Dealing with the obstacle of non-transitivity is harder in principle,
but it has been made simple in practice by the happy circumstance of
recent developments.  After Kleiman transversality was generalized to
non-transitive group actions in cohomology by Speiser \cite{speiser},
it was recently generalized to homological transversality in
$K$-theory for transitive actions by Miller and Speyer
\cite{torVanish}.  More recently still, Sierra formulated and proved a
$K$-theoretic version for non-transitive group actions \cite{sierra},
and this (Theorem~\ref{t:sierra}) is the version we use in the proof
of (weak) equivariant homological Kleiman transversality,
Theorem~\ref{t:kleiman}.

Looking to the future, it seems the next step would be to combine the
equivariant positivity statements here with Mihalcea's in the
equivariant quantum setting \cite{mihalcea}.  In our (non-quantum)
setup we can use functoriality to reduce to the full (generalized)
flag variety $G/B$; this is one of many things that will have to
change to deal with the quantum situation.

\subsection*{Acknowledgements}

The authors are grateful to Michel Brion for his work on $K$-theory of
homogeneous spaces \cite{brion}, which has been indispensable in our
investigations; for personally providing additional insight into the
subtleties of that work; and for sending helpful comments on a draft
of this article.  Hearty thanks go to Shrawan Kumar for numerous
detailed comments and significant corrections on an earlier draft.  We
also wish to thank Seth Baldwin and Hiroshi Naruse for valuable corrections, and Susan
Sierra for discussions regarding $K$-theory and Cohen--Macaulayness.
Substantial parts of this work were completed during two visits by DA
to the University of Minnesota.  DA was partially funded by an RTG
graduate fellowship, NSF Grant 0502170.  SG and EM were partially
funded by NSF Career Grant DMS-0449102.

\section{Flag varieties, mixing spaces, and $K$-theory}\label{s:flag}%

Write $\NN \subset \ZZ \subset \CC$ for the monoid of nonnegative
integers, the ring of integers, and the field of complex numbers.  All
of our schemes are separated and of finite type over~$\CC$.  A variety
is assumed to be reduced and equidimensional, but not necessarily
irreducible.  If a group~$G$ acts on $Y$ on the right and on $Z$ on
the left, then $Y \times^G Z$ is defined to be the quotient of the
product $Y \times Z$ by the relation $(y\ldot g, z)\sim (y,g\ldot z)$.

\subsection{Lie theory}\label{sub:lie}\flushbottom

We refer to Borel \cite{Bor} for the following standard facts and
notation.  Let $G$ be a complex semisimple algebraic group of adjoint
type, and fix a choice $T \subseteq B \subseteq G$ of a maximal torus
and Borel subgroup.  These have Lie algebras $\ttt \subseteq \bbb
\subseteq \fg$.  The weight lattice $\Hom(T,\CC^*)$ of~$T$ contains
the set~$R$ of roots.  Write $R^+$ and $R^-$ for the positive and
negative roots, respectively (so $R^+$ is the set of nonzero weights 
for the action of $T$ on $\bbb$).  Write $\Delta = \{\alpha_1,
\ldots, \alpha_n\} \subseteq R^+$ for the simple roots; thus every 
positive root $\alpha\in R^+$ can be written as $\alpha = \sum_i k_i 
\alpha_i$ with $k_i\in \NN$.

Since $G$ is adjoint, the root and weight lattices are the same, and
$\Delta$ is a basis for the weight lattice.  For a weight $\lambda$,
write $e^\lambda\colon T \to \CC^*$ for the corresponding character.

The normalizer $N(T)$ of the torus~$T$ in~$G$ has the Weyl group $W =
N(T)/T$ as its quotient.  (Following a common abuse of notation, we
sometimes identify $w\in W$ with a chosen representative in
$N(T)\subseteq G$; the choice will never matter.)  The simple roots
$\alpha_i$ determine simple reflections $s_i \in W$, and these
generate~$W$.  The length~$\ell(w)$ of an element $w \in W$ is the
smallest number $\ell$ such that $w$ has an expression $w = s_{i_1}
\cdots s_{i_\ell}$ as a product of simple reflections.  When $\ell =
\ell(w)$, such an expression is a reduced word for~$w$.  Let $w_\circ
\in W$ be the (unique) longest element.  The (strong) Bruhat partial order
on~$W$ is defined by setting $v \leq w$ if $v$ has a reduced word that
occurs as a subword (not necessarily consecutive) of a reduced word for~$w$.

\subsection{Flag varieties and Schubert varieties}\label{sub:flag}

The central object of this paper is the quotient $X = G/B$, known as
the (complete) flag variety of~$G$.  Let $B^- = w_\circ B w_\circ$ be
the opposite Borel subgroup, so $T = B \cap B^-$.  The flag variety is
paved by the Schubert cells $C_w = BwB/B \isom \CC^{\ell(w)}$ and also
by the opposite Schubert cells $C^w = B^-wB/B \isom \CC^{\dim X -
\ell(w)}$:
$$%
  X = \coprod_{w \in W} C_w = \coprod_{w \in W} C^w.
$$
The Schubert varieties $X_w$ and opposite Schubert varieties $X^w$ are
the closures in~$X$ of the cells~$C_w$ and~$C^w$, respectively.
Bruhat order encodes containments among them:
$$%
  X_v \subseteq X_w \ \iff\ v \leq w \ \iff\ X^v \supseteq X^w.
$$

More generally, if $P \subseteq G$ is a parabolic subgroup, the
partial flag variety $G/P$ corresponding to~$P$ has a cell
decomposition
$$%
  G/P = \coprod_{w \in W^P} C_w,
$$
where $C_w = BwP$ and $W^P$ is the set of minimal length
representatives for the cosets of~$W$ modulo its parabolic subgroup
corresponding to~$P$.  Again write $X_w$ and $X^w$ for the Schubert
and opposite Schubert varieties, the closures of~$C_w$ and~$C^w$
in~$G/P$.

The Schubert varieties $X_v$ and $X^w$ intersect properly and
generically transversally in the \emph{Richardson variety}~$X_v^w$.
In particular, $X_v^w$ is empty unless $v\geq w$, and the intersection
$X_w\cap X^w$ is transverse at the point~$wB$; moreover, Richardson 
varieties are irreducible.  Schubert varieties are
Cohen--Macaulay and have rational singularities \cite{ramanathan}, and
the same is true of Richardson varieties, using \cite[Lemmas~1
and~2]{brion}.

\subsection{Borel mixing spaces and approximations}\label{sub:borel}

We recall some basic notions concerning the Borel mixing space
construction; for details, see \cite{eq} or \cite{EdGr}.  Let $S$ be
an algebraic torus of dimension~$r$.  Fix a basis
$\{\beta_1,\ldots,\beta_r\}$ for the weight lattice of~$S$, and use the 
negatives of these weights to identify $S$ with~$(\CC^*)^r$.  
In our applications, $S$ will be a
subtorus of the maximal torus $T \subseteq G$ fixed in
Section~\ref{sub:lie}.

The universal principal $S$-bundle $\EE S \to \BB S$ is the union of
finite-dimensional algebraic approximations $\EE_m S \to \BB_m S$,
which may be constructed as
$$%
  \EE_m S = (\CC^{m+1} \minus 0)^{\times r} \to (\PP^m)^{\times r} =
  \BB_m S,
$$
where $S \isom (\CC^*)^r$ acts on $(\CC^{m+1} \minus 0)^{\times r}$
diagonally by the standard action; 
that is, the action of $S$ on the $i$th 
factor of $(\CC^{m+1} \minus 0)^{\times r}$ is by $-\beta_i$.  
We write
$$%
  \PP = \EE_m S / S = \BB_m S
$$
for some fixed sufficiently large $m \gg 0$.

If $Y$ is a scheme with a left $S$-action, the \emph{Borel mixing
space} is $\EE S \times^S Y$.  As with the universal principal
$S$-bundle, we will only use algebraic approximations $\YY = \EE_m S
\times^S Y$, for some fixed $m \gg 0$.  Thus $\YY$ is a
Zariski-locally trivial fiber bundle over~$\PP$ with fiber~$Y$.  We
view the transition $Y \goesto \YY$ from an $S$-scheme to its
approximate mixing space as a functor on $S$-schemes, and we always
indicate it by changing from roman to calligraphic font.  When $X =
G/P$, we denote by~$p$ the projection $\XX \to \PP$.

A section $\PP \to \YY$ is equivalent to an $S$-equivariant map $\EE_m
S \to Y$, as one sees from the following fiber diagram, where the
horizontal maps are principal $S$-bundles.
\begin{diagram}
  \EE_m S \times Y &     \rTo    & \YY \\
         \dTo      &\boxtimes\ \ & \dTo\\
        \EE_m S    &     \rTo    & \PP
\end{diagram}

\subsection{Positivity of subtori}\label{sub:full}

The basis $\{\beta_1,\ldots,\beta_r\}$ in Section~\ref{sub:borel} for
the weight lattice of the subtorus $S \subseteq T$ is
\begin{itemize}
\item
\emph{positive} if the restrictions $\alpha_1|_S, \ldots, \alpha_n|_S$
of the simple roots $\alpha_1,\ldots,\alpha_n$ are all nonnegative
integer combinations of $\beta_1,\ldots,\beta_r$; and
\item
\emph{full} if it is positive, and each $\beta_i$ equals the
restriction of some simple root.
\end{itemize}

The positivity hypothesis will arise systematically, as it is
essential to the geometry in our proof of Theorem~\ref{t:strong}.
Notably, it guarantees that the mixing group $\Gamma$ is big enough:
positivity begets sections.  On the other hand, fullness arises as an
essential hypothesis only once in this paper: we mention it in
Corollary~\ref{c:GrKu}, with regard to the diagonal subtorus $S
\subseteq T \times T$ inside $G \times G$, so that it can be applied
in Corollary~\ref{c:GrKu'}.

\subsection{Restrictions and boundary divisors}\label{sub:restriction}

For each $j \in \{0,\ldots,m\}$, fix a subspace $\PP^j \subseteq
\PP^m$.  Then, for any $r$-tuple $J = (j_1,\ldots,j_r)$ of integers
with $0 \leq j_i \leq m$, write
$$%
  \PPJ = \PP^{j_1} \times \cdots \times \PP^{j_r} \subseteq \PP
$$
and similarly
$$%
  \PP^J = \PP^{m-j_1} \times \cdots \times \PP^{m-j_r} \subseteq \PP.
$$
Set $|J| = j_1+\cdots+j_r = \dim \PPJ = \codim \PP^J$.  The subvariety
$\PPJ$ has \emph{boundary divisor}
$$%
  \del\PPJ = \PP_{(j_1-1,j_2,\ldots,j_n)} \cup
  \PP_{(j_1,j_2-1,\ldots,j_n)} \cup \cdots \cup
  \PP_{(j_1,j_2,\ldots,j_n-1)}.
$$

Our reason for defining $\PPJ$ is that, for $S$-invariant subschemes
$Y \subseteq X = G/B$, we will need to consider the restrictions
$$%
  \YY_J = p^{-1}(\PPJ) \cap \YY \subseteq \XX \quad \text{and} \quad
  \YY^J = p^{-1}(\PP^J) \cap \YY \subseteq \XX
$$
of the bundles $\YY$ to~$\PPJ$ and $\PP^J$.  In particular, when $Y =
X^w$ is an opposite Schubert variety, so $\YY_J = \XX^w_J =
\XX^w|_{\PPJ}$, we will additionally need to consider the
\mbox{\emph{boundary divisor}}
$$%
  \del\XX^w_J = \big(\XX^w|_{\del\PPJ}\big) \cup \big(\bigcup_{v>w}
  \XX^v_J\big).
$$
This variety is Cohen--Macaulay, by \cite[Lemma~4]{brion}.

We will on many occasions need sheaves of the form $\OO_Y(-\del Y)$
for which a Weil divisor $\del Y$ has been defined.  When $\del Y$ is
effective, $\OO_Y(-\del Y) = \II(\del Y)$ is the ideal sheaf of~$\del
Y$ in~$\OO_Y$.  Once $\del Y$ has been defined, we write $\del\YY$ for
the corresponding Weil divisor on the mixing space.  In what follows,
we will often write $\OO_Y(-\del)$ instead of $\OO_Y(-\del Y)$ because
our boundary divisors can be notationally complicated varieties.  When
$\OO_Y(-\del Z)$ is written, it serves to emphasize that $Z \neq Y$.

\subsection{Line bundles and canonical sheaves}\label{sub:bundles}

A character $\lambda\colon  S \to \CC^*$ defines a geometric line bundle
$$%
  \OO(\lambda) = \EE_m S \times^S \CC_{\lambda}
$$
on~$\PP$, where $\CC_{\lambda}$ is the one-dimensional representation
in which $z \ldot v = \lambda(z) v$ for \mbox{$v \in
\CC_\lambda$} and $z \in S$.  Using the basis $\{\beta_1,\ldots,\beta_r\}$ 
to identify $S\isom(\CC^*)^r$ as in \S\ref{sub:borel}, we have
$$%
  \OO(\beta_i) \isom p_i^*\OO(1),
$$
where $p_i$ is the projection on the $i$th factor of $\PP = (\PP^m)^{\times r}$.  
(This explains why we use the negative weights to define the isomorphism $S\isom(\CC^*)^r$.)  
Positivity for line bundles defines a partial order on
the weight lattice of~$S$, in which $\lambda \geq 0$ if and only if
$\OO(\lambda)$ possesses nonzero global sections; equivalently,
\mbox{$\lambda = c_1\beta_1 + \cdots + c_r\beta_r \geq 0$} if and only
if $c_i \geq 0$ for all~$i$.

Similarly, $\lambda$ also defines a line 
bundle
$$%
  \cL_\lambda = G \times^B \CC_{-\lambda},
$$
on $X=G/B$, by extending the character to $B$.  The line bundle
$\cL_\lambda$ is to be distinguished from the equivariantly nontrivial
but non-equivariantly trivial line bundle $e^\lambda = X \times
\CC_\lambda$.

\begin{example}\label{ex:rho}
When the character is $2\rho = \sum_{\alpha \in R^+} \alpha$, the line
bundle $\cL_{2\rho}$ is very ample.  By considering the
simply-connected form of~$G$, we find that $\cL_{2\rho}$ has a square
root~$\cL_\rho$, which is also a $B$-equivariant very ample line
bundle.  In general, this line bundle is not equivariant for the
adjoint torus, but the bundle $e^\rho\cL_\rho$ is.
\end{example}

For $X=G/B$, we have $\omega_X \isom \cL_{-2\rho}$,
$\omega_{X_w} \isom e^{-\rho}\cL_{-\rho}\otimes\OO_{X_w}(-\del)$, and 
$\omega_{X^w} \isom e^{\rho}\cL_{-\rho}\otimes\OO_{X^w}(-\del)$ as
equivariant sheaves \cite[Proposition~2.2(a-b)]{GrKu}.  Similarly,
for the Bott--Samelson varieties $\wt{X}_w \xrightarrow{\phi} X_w$, we
have $\omega_{\wt{X}_w} \isom \phi^*e^{-\rho}\cL_{-\rho}\otimes
\OO_{\wt{X}_w}(-\del)$ using \cite[Proposition~2]{ramanathan}.

\subsection{Equivariant $K$-theory}\label{sub:Ktheory}

For this subsection, let $X$ be any smooth variety with a left action
of the torus~$S$.  (Shortly, we will return to $X = G/P$ and $S
\subseteq T$, a torus in~$G$.)  Denote by $K_S(X)$ the Grothendieck
ring of $S$-equivariant vector bundles on~$X$.  The representation
ring equals the group algebra
$$%
   R(S) = \ZZ[\Lambda]
        = \bigoplus_{\lambda \in \Lambda} \ZZ \cdot e^\lambda
$$
of the weight lattice $\Lambda = \Hom(S,\CC^*)$ of~$S$.  It coincides
with the equivariant Grothen\-dieck ring of a point.  Writing $\pi$
for the projection to a point, the pullback $\pi^*$ therefore makes
$K_S(X)$ into an $R(S)$-module.

Since $X$ is smooth, the natural $R(S)$-module homomorphism $K_S(X)
\to K^S(X)$ to the Grothendieck group of $S$-equivariant coherent
sheaves on~$X$ is an isomorphism.  The product of the classes of two
coherent sheaves $\cE$ and~$\cF$ is the alternating sum
$$%
  [\cE] \cdot [\cF] = \sum_{i \geq 0} (-1)^i [\Tor_i^X(\cE,\cF)]
$$
of their Tor sheaves.

The $K$-homology group $K^S$ pushes forward along proper morphisms: $X
\xrightarrow{q} Y$ yields
$$%
  q_*[\cF] = \sum_{i \geq 0} (-1)^i [R^i q_*\cF],
$$
the point being that all higher direct images are coherent.  In
particular, if $X$ is smooth and proper, there is an $R(S)$-bilinear
pairing on $K_S(X)$ given by
$$%
  \<[\cE],[\cF]\>_S = \pi_*([\cE]\cdot[\cF]),
$$
where $\pi$ is the projection to a point.

\subsection{Equivariant $K$-theory of flag varieties}\label{sub:Kflag}

Resume the case $X = G/P$ acted on by a torus $S \subseteq T$.  Since
the subvarieties $X_w$ and $X^w$ are $S$-stable, their structure
sheaves are quotients of $\OO_X$ by $S$-stable ideal sheaves and hence
$S$-equivariant.  Let
$$%
  \OO_w = [\OO_{X_w}] \quad \text{and} \quad \OO^w = [\OO_{X^w}]
$$
be the classes of the structure sheaves of the Schubert varieties and
opposite Schubert varieties in~$K_S(X)$.  Because of the cell
decompositions in Section~\ref{sub:flag}, the sets $\{\OO_w\}_{w \in
W^P}$ and $\{\OO^w\}_{w \in W^P}$ indexed by the minimal length coset
representatives are bases for~$K_S(X)$ as an $R(S)$-module.  Let
$\xi^w = \big[\OO_{X^w}(-\del)\big]$, where $\del = \del X^w =
\bigcup_{v>w} X^v$ is the boundary of~$X^w$, and $\xi_w =
\big[\OO_{X_w}(-\del)\big]$, where $\del = \del X_w = \bigcup_{v<w}
X_v$.  Then $\{\xi^w\}_{w\in W^P}$ and $\{\xi_w\}_{w\in W^P}$ are two
more bases for~$K_S(X)$.

\begin{lemma}[{\cite[Proposition~2.1]{GrKu}}]\label{l:duality}
The $\OO$ and $\xi$ bases of~$K_S(G/P)$ are dual:
$$%
  \<\OO_w, \xi^v\>_S = \delta_{w,v} \in R(S)
  \quad\text{and}\quad
  \<\OO^w, \xi_v\>_S = \delta_{w,v} \in R(S).
$$
\end{lemma}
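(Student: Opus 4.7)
The two pairings are interchanged by the involution $B \leftrightarrow B^-$ on $G/P$ (equivalently $w \mapsto w_\circ w$), so it suffices to prove $\langle \OO_w, \xi^v\rangle_S = \delta_{v,w}$. The strategy is to reduce the pairing to an Euler characteristic on the Richardson variety $R := X_w^v$ and then invoke a Kawamata--Viehweg-type vanishing argument of the same flavor as the one developed later in this paper.

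\textbf{Step 1 (reduction to a Richardson variety).} From the short exact sequence $0 \to \OO_{X^v}(-\del X^v) \to \OO_{X^v} \to \OO_{\del X^v} \to 0$ I obtain $\xi^v = [\OO^v] - [\OO_{\del X^v}]$ in $K_S(G/P)$. By Richardson's theorem, $X_w \cap X^v$ equals the Richardson variety $X_w^v$, which is of expected dimension $\ell(w) - \ell(v)$ and is empty unless $v \leq w$; likewise $X_w \cap \del X^v = \bigcup_{v < u \leq w} X_w^u =: \del^- X_w^v$ has expected codimension one in $X_w^v$. All three subvarieties $X_w$, $X^v$, $\del X^v$ are Cohen--Macaulay (the last by \cite[Lemma~4]{brion}), so proper Cohen--Macaulay intersection in the smooth ambient $G/P$ forces the higher Tor-sheaves to vanish; hence $\OO_w \cdot \OO^v = [\OO_{X_w^v}]$ and $\OO_w \cdot [\OO_{\del X^v}] = [\OO_{\del^- X_w^v}]$. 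Reassembling via $0 \to \OO_{X_w^v}(-\del^- X_w^v) \to \OO_{X_w^v} \to \OO_{\del^- X_w^v} \to 0$ gives
\[
\OO_w \cdot \xi^v = [\OO_{X_w^v}(-\del^- X_w^v)],
\]
so $\langle \OO_w, \xi^v\rangle_S = \chi^S\bigl(R,\, \OO_R(-\del^- R)\bigr)$. When $v \not\leq w$ this is zero (empty $R$); when $v = w$ the Richardson variety is the single reduced $T$-fixed point $wP$ with empty boundary, giving $\chi^S = 1 \in R(S)$.

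\textbf{Step 2 (vanishing for $v < w$, the main obstacle).} Combining the Schubert canonical-sheaf formulas of \cite[Proposition~2.2]{GrKu} and applying adjunction across the proper Cohen--Macaulay intersection ($\omega_R \cong \omega_{X_w}|_R \otimes \omega_{X^v}|_R \otimes \omega_X^{-1}|_R$) yields, on $G/B$,
\[
\omega_R \cong \OO_R(-\del^- R - \del^+ R),
\]
where $\del^+ R := \bigcup_{v \leq u < w} X_u^v = \del X_w \cap X^v$ is the complementary part of the Richardson boundary (the $G/P$ case carries only an additional line-bundle and character twist that does not affect the argument). Hence $\OO_R(-\del^- R) \cong \omega_R \otimes \OO_R(\del^+ R)$, where $\del^+ R$ is nonempty for $v < w$ (it contains $X_v^v = \{vP\}$) and supports a big, semi-ample divisor on $R$ (the restriction of an ample line bundle associated with $\cL_\rho$). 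Kawamata--Viehweg vanishing, applied on a Bott--Samelson-type resolution $\pi\colon \tilde R \to R$ and descended via the rational singularities of $R$, therefore kills $H^i(R, \omega_R \otimes \OO_R(\del^+ R))$ for $i > 0$; and $H^0(R, \OO_R(-\del^- R)) = 0$ because the only global sections of $\OO_R$ on the connected projective $R$ are constants, which must vanish on the nonempty $\del^- R$. Thus the pairing is zero for $v < w$. The technical heart lies here: the canonical-sheaf formula on a Richardson variety together with the descent of Kawamata--Viehweg vanishing from a resolution of singularities---precisely the circle of ideas developed systematically in Sections~\ref{s:vanishing} and~\ref{s:bs} of the paper.
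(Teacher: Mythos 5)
The paper does not prove this lemma itself; it simply cites \cite[Proposition~2.1]{GrKu}, so there is no in-paper argument to compare against. Your proof is a correct and standard route to the result. Step~1 is sound: the proper Cohen--Macaulay intersection $X_w\cap X^v=X_w^v$ (and of $X_w$ with $\del X^v$) kills all higher Tors, so $\OO_w\cdot\xi^v=[\OO_{X_w^v}(-\del^-)]$, reducing the pairing to a twisted Euler characteristic on the Richardson variety, with the cases $v\not\le w$ and $v=w$ handled trivially. Step~2 combines the dualizing-sheaf adjunction formula with Kawamata--Viehweg vanishing on a resolution, descended through rational singularities, and finishes with the easy $H^0$ argument; this is exactly the circle of ideas in Brion's proof that Richardson varieties have rational singularities, so you are implicitly recreating part of \cite[Lemma~4]{brion}, which you could instead cite outright for the higher vanishing and shorten the argument considerably.

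A few technical points deserve more care than the sketch allows. First, the line $\omega_R\cong\omega_{X_w}|_R\otimes\omega_{X^v}|_R\otimes\omega_X^{-1}|_R$ is the adjunction formula for a proper CM intersection in smooth ambient, and it tacitly uses that the restriction $\OO_{X_w}(-\del X_w)|_R$ really is $\OO_R(-\del^+R)$ (and similarly for $\del^-$), which again relies on the Tor-vanishing from Step~1 and on $\del X_w$, $\del X^v$ meeting the Richardson variety properly in each component---worth stating. Second, ``$\del^+R$ supports a big, semi-ample divisor'' is the crux and is not automatic: $\del X_w$ is generally not Cartier on $X_w$, so the statement should be phrased at the level of the Bott--Samelson resolution (or via a suitable multiple, as in \cite[Lemma~4]{brion}), where the pullback of the boundary is a normal-crossings divisor supporting a nef and big line bundle. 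Third, to conclude $\chi^S=0$ in $R(S)$ rather than just the non-equivariant Euler characteristic, you should observe that the entire argument is $S$-equivariant, so each cohomology group vanishes as an $S$-representation; this is implicit but should be said. None of these are fatal, but as written the heavy lifting is outsourced to ``the circle of ideas'' in Sections~\ref{s:vanishing} and~\ref{s:bs}, whereas \cite[Lemma~4]{brion} already packages the needed vanishing and would make the argument self-contained and shorter.
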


Further basic information and notation concerning the equivariant
$K$-theory of flag varieties must wait until Section~\ref{s:approx},
where it shown how the ordinary $K$-theory of mixing spaces
approximates it.

\subsection{Homological transversality}\label{sub:transversality}

Our results depend on a certain kind of transversality that simplifies
the $K$-theoretic product of two coherent sheaves.  This
simplification arises separately a couple of times, in the proof of
Conjecture~\ref{c:GrRa}, and in Section~\ref{s:transverse} as part of
the proof of our main result, Theorem~\ref{t:main}.

Two quasicoherent sheaves $\cE$ and $\cF$ on an arbitrary variety~$X$
are \emph{homologically transverse} if all of their higher Tor sheaves
vanish:
$$%
  \Tor_j^X(\cE,\cF) = 0 \quad \text{for all } j \geq 1.
$$
If $\cE = \OO_Y$ is the structure sheaf of a subvariety $Y \subseteq
X$, we say that $\cF$ is homologically transverse to~$Y$.  If $X$ is
complete and nonsingular, and $Y,Z \subseteq X$ are homologically
transverse subvarieties, then in $K(X)$,
$$%
  [\OO_Y] \cdot [\OO_Z] = [\OO_Y \otimes \OO_Z] = [\OO_{Y \cap Z}],
$$
where $Y \cap Z$ is the scheme-theoretic intersection.  When $Y$ and
$Z$ intersect properly (i.e., the sum of their codimensions equals the
codimension of every component of their intersection) and both are
Cohen--Macaulay, then they are homologically transverse, and their
intersection is Cohen--Macaulay; this is the content of
\cite[Lemma~1]{brion}.

We shall need the following special case of a theorem due to Sierra
\cite{sierra}.

\begin{thm}\label{t:sierra}
Let $X$ be a variety with a left action of an algebraic group $G$ and
let $\mathcal{F}$ be a coherent sheaf on $X$.  Suppose that
$\mathcal{F}$ is homologically transverse to the closures of the
$G$-orbits on $X$.  Then for each coherent sheaf $\mathcal{E}$ on $X$
there is a Zariski-dense open set $U \subseteq G$ such that
$\Tor_j^X(\cE, g\ldot\cF) = 0$ for all $j \geq 1$ and all $g \in U$.
\end{thm}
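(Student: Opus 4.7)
The plan is to reduce to the transitive case, covered by Miller--Speyer \cite{torVanish}, via noetherian induction on the $G$-orbit stratification of $X$; the hypothesis on $\cF$ serves precisely to collapse the change-of-rings spectral sequences that let us pass between $\Tor$ on $X$ and $\Tor$ on a $G$-invariant closed subscheme.

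First I would carry out a standard devissage. A coherent sheaf $\cE$ admits a finite filtration whose successive quotients are structure sheaves $\OO_{Y_i}$ of integral closed subschemes $Y_i \subseteq X$, and the long exact sequence of $\Tor$ reduces the problem to the case $\cE = \OO_Y$ with $Y$ integral, the required dense open being the intersection of the finitely many dense opens produced for the~$\OO_{Y_i}$.

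Next I would localize to a single orbit closure. Let $O$ be the $G$-orbit containing the generic point of~$Y$ and set $Z := \ol{O}$, so that $Y \subseteq Z$ and $Z$ is $G$-invariant. By hypothesis $\Tor_q^X(\OO_Z, \cF) = 0$ for $q \geq 1$; because $Z$ is $G$-invariant this survives translation, yielding $\Tor_q^X(\OO_Z, g\cF) = 0$ for every $g \in G$ and $q \geq 1$. The change-of-rings spectral sequence
\[
  E_2^{p,q} = \Tor_p^Z\bigl(\OO_Y,\, \Tor_q^X(\OO_Z,\, g\cF)\bigr)
  \;\Longrightarrow\; \Tor_{p+q}^X(\OO_Y,\, g\cF)
\]
therefore collapses to the identification $\Tor_j^X(\OO_Y, g\cF) \isom \Tor_j^Z(\OO_Y, (g\cF)|_Z)$. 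Applied in the reverse direction, the same collapse shows that $\cF|_Z$ remains homologically transverse to each $G$-orbit closure contained in~$Z$, so I may replace $X$ by $Z$ and $\cF$ by $\cF|_Z$ and assume that $X$ has a dense open $G$-orbit~$O$ with proper $G$-invariant complement $\del X$.

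At this point I would induct on $\dim X$. On $O$ the action is transitive, and Miller--Speyer's $\Tor$-transversality \cite{torVanish} produces a dense open $U_O \subseteq G$ with $\Tor_j^O(\OO_Y|_O, g\cF|_O) = 0$ for $j \geq 1$ and $g \in U_O$; the inductive hypothesis on the strictly lower-dimensional $G$-variety $\del X$ supplies a dense open $U_\del \subseteq G$ controlling the pieces of $\OO_Y$ supported on~$\del X$. The main obstacle I expect is the final gluing: $\OO_Y$ does not split into open and closed parts, so one must analyze a short exact sequence separating the subsheaf $\OO_Y^\del \subseteq \OO_Y$ of sections set-theoretically supported on~$\del X$ from its quotient, and track $\Tor_j^X$ through it. This in turn requires $g\cF$ to be transverse not only to the individual orbit closures in $\del X$ but to $\OO_{\del X}$ itself, and more generally to unions and intersections of those orbit closures. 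Establishing this extended transversality via iterated Mayer--Vietoris sequences (again feeding the orbit-closure hypothesis into long exact $\Tor$ sequences) is the technical heart of the argument and fixes the final choice $U = U_O \cap U_\del$.
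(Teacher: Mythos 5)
The paper does not prove this statement: Theorem~\ref{t:sierra} is quoted verbatim as a special case of a theorem of Sierra \cite{sierra}, with no internal argument to compare against. Your attempt is therefore a blind reconstruction of Sierra's proof, and I will assess it on its own terms.

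Your reductions are sound as far as they go: devissage to $\cE = \OO_Y$ with $Y$ integral; passage from $\Tor^X$ to $\Tor^Z$ for $Z = \ol O$ the orbit closure through the generic point of~$Y$, via the base-change spectral sequence (which does collapse, since $Z$ is $G$-invariant so $\Tor_q^X(\OO_Z,g\cF)=0$ for all $g$); and the observation that $\cF|_Z$ inherits transversality to orbit closures inside~$Z$, by the same collapse. Invoking Miller--Speyer \cite{torVanish} on the dense orbit $O\subseteq Z$ is also legitimate and gives $\Tor_j^Z(\OO_Y,g\cF|_Z)|_O=0$ for $j\geq 1$ and general~$g$.

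The gap is the gluing step, which you flag as ``the technical heart'' but do not close, and the device you propose cannot close it. After your reductions $Y$ is integral with generic point in~$O$, so the subsheaf $\OO_Y^\del\subseteq\OO_Y$ of sections set-theoretically supported on $\del Z$ is \emph{zero}; your short exact sequence degenerates to $0\to 0\to\OO_Y\to\OO_Y\to 0$ and carries no information. What actually remains to be shown is that the $\Tor$-sheaf $\Tor_j^Z(\OO_Y,g\cF|_Z)$, which you have shown to vanish over $O$, has no support along $Y\cap\del Z$ either. The inductive hypothesis for $\del Z$ does not reach this, since it concerns sheaves $\cE$ supported inside~$\del Z$, and $\OO_Y$ is not such a sheaf. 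The extended transversality you hope to establish by Mayer--Vietoris is also problematic: the hypothesis gives transversality of $g\cF$ to each orbit closure, but a long exact $\Tor$ sequence for $Z_1\cup Z_2$ brings in $Z_1\cap Z_2$, and intersections of orbit closures are generally not themselves orbit closures, so you have no handle on them. As it stands, the noetherian-induction scheme does not reduce the statement to Miller--Speyer; a different mechanism (in \cite{sierra}, a direct analysis of the action morphism $G\times X\to X$ together with generic flatness over~$G$, using the orbit-closure transversality to substitute for the flatness that transitivity gave for free in \cite{torVanish}) is needed.
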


\subsection{Relative Kawamata--Viehweg vanishing}\label{sub:kv}

Our proof of Theorem~\ref{t:vanishing} relies on a relative form of
Kawamata--Viehweg vanishing, the relevant version of which we extract
from \cite[Corollary~6.11]{EsVi}.  In the statement, $f$-nef means
\emph{$f$-numerically effective}: the line bundle~$\cM$ on $\wt Z$ has
nonnegative intersection with every curve contained in a fiber of~$f$.
In addition, \emph{$f$-big} means that the powers of~$\cM$ give rise
to projective morphisms that preserve the dimension of every general
fiber of~$f$.

\begin{thm}\label{t:kv}
Let $f\colon  \wt Z \to Z$ be a proper surjective morphism of varieties,
with $Z$ nonsingular.  Let $\cM$ be a line bundle on~$\wt Z$ such that
$\cM^N(-D)$ is $f$-nef and $f$-big for a normal crossing divisor $D =
\sum_{j=1}^r a_j D_j$, where $0 < a_j < N$ for all~$j$.  Then
$$%
  R^i f_*(\cM \otimes \omega_\wt Z) = 0 \text{ for all } i > 0.
$$
\end{thm}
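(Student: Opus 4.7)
\medskip

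\textbf{Proof plan for Theorem~\ref{t:kv}.}
The statement is a $\QQ$-divisor-flavored relative Kawamata--Viehweg vanishing, and the plan is to reduce it to the classical integral relative vanishing via the cyclic covering trick. First I would recast the hypothesis: writing $\cM^N(-D) = \cM^N \otimes \cO(-D)$, the condition ``$f$-nef and $f$-big'' for this integral line bundle is exactly the assertion that the $\QQ$-line-bundle $\cM - \tfrac{1}{N}D$ is $f$-nef and $f$-big. The requirement $0 < a_j < N$ guarantees that the fractional part of $\tfrac{1}{N}D$ has support equal to all of $D_{\mathrm{red}}$, while $\lfloor \tfrac{1}{N}D\rfloor = 0$, so the ``round-down'' term does not interfere.

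Next I would build a cyclic $N$-fold cover $\pi\colon Y \to \wt Z$ ramified along $D$. Concretely, using the isomorphism $\cM^N \cong \cO(D) \otimes \cM^N(-D)$ to get a section of $\cM^N \otimes \cO(-D)^{\!-1}$, one takes
$$
  Y \;=\; \spec_{\wt Z}\Bigl(\bigoplus_{i=0}^{N-1} \cM^{-i}\bigl(\lfloor iD/N\rfloor\bigr)\Bigr).
$$
Because $D$ is normal crossing and the coefficients $a_j$ lie strictly between $0$ and $N$, the variety $Y$ has only quotient singularities, hence rational singularities. Let $\mu\colon \wt Y \to Y$ be a log resolution, and set $g = f \circ \pi \circ \mu \colon \wt Y \to Z$.

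On $\wt Y$ the pulled-back line bundle $(\pi\mu)^*\cM$ is $g$-nef and $g$-big: its $N$-th power is the pullback of $\cO(D) \otimes \cM^N(-D)$, and on the cover the divisor $\pi^*D$ becomes divisible by $N$, so removing this integral perturbation does not change nefness or bigness. Classical relative Kawamata--Viehweg vanishing for integral $g$-nef-and-big line bundles on the smooth variety $\wt Y$ then yields
$$
  R^i g_*\bigl(\omega_{\wt Y} \otimes (\pi\mu)^*\cM\bigr) \;=\; 0 \quad \text{for all } i > 0.
$$

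Finally I would descend. Since $Y$ has rational singularities, $R\mu_*\omega_{\wt Y} \cong \omega_Y$, and projection formula plus the Leray spectral sequence for $\mu$ give
$R^i(f\pi)_*(\omega_Y \otimes \pi^*\cM) = 0$ for $i>0$. The cyclic-cover formula for the dualizing sheaf,
$$
  \pi_* \omega_Y \;\cong\; \omega_{\wt Z} \otimes \bigoplus_{i=0}^{N-1} \cM^{\,i}\bigl(-\lceil iD/N\rceil\bigr),
$$
combined with the projection formula, identifies $R^i f_*(\omega_{\wt Z} \otimes \cM)$ with the $i=N-1$ eigenspace summand of $R^i(f\pi)_*(\omega_Y \otimes \pi^*\cM)$ (the round-up $\lceil (N-1)D/N\rceil$ equals the reduced divisor $D_{\mathrm{red}}$, and tensoring with $\pi^*\cM$ shifts the decomposition by one). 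The condition $0 < a_j < N$ is precisely what makes this summand appear without any unwanted shifts of $\cM$ by integral multiples of $D_j$. The vanishing on the cover then transports to the required vanishing on $\wt Z$.

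The main technical obstacle is keeping track of the eigenspace bookkeeping in the last step; the cleanest route is to verify that our hypotheses match those of \cite[Corollary~6.11]{EsVi} directly, which encapsulates exactly this cyclic-cover computation.
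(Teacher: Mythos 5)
The paper does not give its own proof of this statement: as the sentence preceding the theorem explains, it is extracted directly from \cite[Corollary~6.11]{EsVi}. Your cyclic-cover sketch is the standard argument behind that corollary, and you note at the end that the cleanest route is to verify the hypotheses of the Esnault--Viehweg result, which is exactly what the paper does. So methodologically you are aligned with the source, modulo the fact that the paper simply cites it rather than reproving it.

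That said, the bookkeeping in your last paragraph has a slip. With $0 < a_j < N$ one has $\lceil (N-1)a_j/N \rceil = a_j$, not $1$, so $\lceil (N-1)D/N \rceil = D$, not $D_{\mathrm{red}}$; and the formula $\pi_*\omega_Y \cong \omega_{\wt Z}\otimes\bigoplus_{i=0}^{N-1}\cM^i(-\lceil iD/N\rceil)$ is not the one dual to your $\pi_*\OO_Y = \bigoplus_{i=0}^{N-1}\cM^{-i}(\lfloor iD/N\rfloor)$ (duality gives round-down, $\pi_*\omega_Y \cong \omega_{\wt Z}\otimes\bigoplus_i\cM^i(-\lfloor iD/N\rfloor)$). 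With that corrected, the summand you want after tensoring with $\cM$ and applying the projection formula is the $i=0$ piece, which is $\omega_{\wt Z}\otimes\cM$ on the nose, no shift or round-up needed. None of this threatens the argument---the desired sheaf is still a direct summand, so the vanishing on the cover does descend---but it is precisely the kind of index-chasing that \cite[Corollary~6.11]{EsVi} packages cleanly, which is presumably why the paper invokes it rather than rederiving it.
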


\section{Approximating equivariant $K$-theory}\label{s:approx}

Resume the notation from Section~\ref{s:flag}, including $X = G/P$ and
an $r$-dimensional torus \mbox{$S \subseteq T$} in~$G$, along with a
(not necessarily positive) basis $\beta_1,\ldots,\beta_r$ for the
weight lattice of~$S$.  What justifies our omission of the integer~$m$
from the notation for approximate Borel mixing spaces in
Section~\ref{sub:borel}?  The essential idea is that, in analogy with
the observation by Totaro, Edidin, and Graham \cite{totaro,EdGr}
underlying equivariant Chow theory, the $K$-theory of the approximate
mixing spaces has a well-behaved limit as $m$ increases without bound.
This analogy has been precisely formulated by Edidin and Graham
themselves in their work on equivariant Riemann--Roch \cite{EdGrK}.
For us, the required consequence is as follows.

\begin{prop}\label{p:approx}
Let $X = G/P$ and $S \subseteq T$ a torus in~$G$.  An equation holds
in~$K_S(X)$ if and only if its image holds in $K(\XX) = K_S(\EE_m S
\times X)$
for some large~$m$.
\end{prop}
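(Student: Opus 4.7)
The plan is to identify the kernel of the natural pullback map $K_S(X) \to K(\XX)$ with an ideal contained in $J^{m+1} K_S(X)$, where $J \subseteq R(S)$ is the augmentation ideal, and then invoke Krull's intersection theorem on the free $R(S)$-module $K_S(G/P)$ to conclude.

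First, because $S$ acts freely on $\EE_m S \times X$ with quotient~$\XX$, there is a natural isomorphism $K(\XX) \isom K_S(\EE_m S \times X)$, and the map in question is pullback along the second projection $\EE_m S \times X \to X$. To compute its kernel I would use the $S$-equivariant open embedding $\EE_m S \subseteq V := (\CC^{m+1})^{\times r}$, whose complement $Z = Z_1 \cup \cdots \cup Z_r$ is the union of the~$r$ linear subspaces $Z_i = \{x_i = 0\}$, each of codimension $m+1$ with $S$-weight~$\beta_i$ on each of its normal directions. Combined with $S$-equivariant homotopy invariance $K_S(V \times X) \isom K_S(X)$, the localization exact sequence
$$
  G_S(Z \times X) \too K_S(V \times X) \too K_S(\EE_m S \times X) \too 0
$$
identifies $\ker\bigl(K_S(X) \to K(\XX)\bigr)$ with the image of $G_S(Z \times X) \to K_S(X)$.

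Next, by the Koszul resolution of $Z_i$ in~$V$ and the projection formula, the image of $G_S(Z_i \times X) \to K_S(X)$ is contained in the ideal $(1-e^{\beta_i})^{m+1} K_S(X)$. Devissage on $Z$ along the stratification by the~$Z_i$ then shows that the full image is contained in $J_m K_S(X)$, where $J_m \subseteq R(S)$ is the ideal generated by $(1-e^{\beta_i})^{m+1}$ for $i = 1, \ldots, r$. Each generator lies in the $(m+1)^{\mathrm{st}}$ power of the augmentation ideal $J = \<1-e^{\beta_1}, \ldots, 1-e^{\beta_r}\>$, so $J_m \subseteq J^{m+1}$ and hence $\ker\bigl(K_S(X) \to K(\XX)\bigr) \subseteq J^{m+1} K_S(X)$.

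To conclude, I would invoke the standard fact that $K_S(G/P)$ is a free $R(S)$-module, with basis given for example by the opposite Schubert structure sheaves $\{\OO^w\}_{w \in W^P}$ of Section~\ref{sub:Kflag}. Since $R(S) \isom \ZZ[e^{\pm\beta_1}, \ldots, e^{\pm\beta_r}]$ is a Noetherian domain, Krull's intersection theorem yields $\bigcap_n J^n = 0$, and freeness propagates this to $\bigcap_m J^{m+1} K_S(X) = 0$. Therefore any nonzero element of $K_S(X)$ has nonzero image in $K(\XX)$ for all sufficiently large~$m$, which is exactly the statement of the proposition. The main technical step is the identification of $\ker\bigl(K_S(X) \to K(\XX)\bigr)$ with an ideal contained in $J^{m+1} K_S(X)$; everything else reduces to standard equivariant $K$-theory machinery of the kind appearing in the references cited just before the proposition.
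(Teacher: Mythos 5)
Your argument is correct, and it takes a genuinely different route than the paper's proof. The paper proceeds by a topological/completion argument: it first tensors with $\QQ$, observes that $R(S) \to \hat R(S)_\QQ$ is injective and that injectivity persists after tensoring with the free $R(S)$-module $K_S(X)$, then verifies that the approximations $\EE_m S \subseteq \CC^{m\times r}$ form a ``good system of representations'' in the sense of Edidin--Graham, and finally invokes \cite[Theorem~2.1]{EdGrK} to identify the topology on $K_S(X)$ defined by the kernels $\ker\bigl(K_S(X)\to K(\XX)\bigr)$ with the augmentation-ideal-adic topology. You instead compute those kernels directly: the equivariant localization sequence, combined with homotopy invariance and the Koszul resolution of the linear subspaces $Z_i \subset V$ (each of codimension $m+1$ and constant $S$-weight $\beta_i$ on the conormal), bounds the kernel by the ideal $\bigl((1-e^{\beta_i})^{m+1}\bigr)_i \subseteq J^{m+1}$, after which Krull intersection on the free module $K_S(X)$ closes the argument. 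Both proofs ultimately rest on the same two pillars---freeness of $K_S(G/P)$ over $R(S)$ and $J$-adic separatedness---but you replace the cited black box from \cite{EdGrK} with a self-contained localization/Koszul/d\'evissage calculation, and in doing so you also avoid the passage to rational coefficients. The trade-off is that your route requires spelling out a d\'evissage step (every $S$-equivariant coherent sheaf supported on $Z\times X$ has a filtration whose graded pieces are supported on single $Z_i \times X$) that the paper sidesteps; this is standard Noetherian induction, but you gloss over it, and a careful write-up would include it. One should also note, as you implicitly do in your final sentence, that the correct reading of ``for some large $m$'' is ``for all $m\gg 0$,'' since the kernels shrink as $m$ grows; your conclusion $\bigcap_m J^{m+1}K_S(X)=0$ is what makes this work.
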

\begin{proof}
Let $\hat R(S)_\QQ$ denote the completion of the rational
representation ring $R(S) \otimes_\ZZ \QQ$ at its augmentation ideal.
Concretely, the augmentation ideal of $R(S) \cong \ZZ[\Lambda]$ is
generated by the elements $1 - e^\lambda$ for all $\lambda$ in the
weight lattice~$\Lambda$.

The natural morphism $R(S) \to \hat R(S)_\QQ$ is clearly injective.
Tensoring this morphism with $K_S(X)$ yields the natural map from
$K_S(X)$ to its completion at the augmentation ideal of~$R(S)$ because
$K_S(X)$ is finitely generated as an $R(S)$-module.  Moreover, the
morphism remains injective upon this tensoring because $K_S(X)$ is
flat (in fact, free) as an $R(S)$-module by Lemma~\ref{l:duality}.

Next, observe that our system $\CC^{m \times r}$ of
$S$-representations and open subsets $\EE_m S$ constitute a ``good
system of representations'' in the sense of \cite[Section~2.1]{EdGrK}.
This ``goodness'' is easy to verify: it amounts essentially to
checking that $S$ acts freely on the open sets, the system is closed
under direct sum, and the complements of the open sets are linear
subspaces; the details are omitted.

In the presence of goodness \cite[Theorem~2.1]{EdGrK} says the
topology on $K_S(X)$ coincides with the one induced by the kernels of
the surjections $K_S(\CC^{m \times r} \times X) \to K_S(\EE_m S \times
X)$ induced by pullback.  The desired result therefore follows from
injectivity of the homomorphism to the completion.
\end{proof}

Use bars to distinguish classes in the ordinary $K$-ring $K(\XX)$ of
the mixing space from their preimages in $K_S(X)$.  Thus, we write
$\ol\OO_w = [\OO_{\XX_w}]$ and $\ol\OO^w = [\OO_{\XX^w}]$ for the
usual and opposite Schubert classes, as well as $\ol\xi^w$
and~$\ol\xi_w$ for their duals (Section~\ref{sub:Kflag}).

\begin{prop}\label{p:basis}
$K(\XX)$ is a $K(\PP)$-algebra with additive $K(\PP)$-bases
$\big\{\ol\OO_w\big\}{}_{w \in W^P}$ and $\big\{\ol\OO^w\big\}{}_{w
\in W^P}$.  The dual $K(\PP)$-bases are $\big\{\ol\xi^w\big\}{}_{w \in
W^P}$ and $\big\{\ol\xi_w\big\}{}_{w \in W^P}$, respectively.
\end{prop}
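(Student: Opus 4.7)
The projection $p\colon\XX\to\PP$ promotes $K(\XX)$ to a $K(\PP)$-algebra via $p^*$, so the task is to exhibit the claimed bases and then to verify the stated pairing duality.  My plan is to derive freeness from a cellular fibration argument, and to derive the duality from Lemma~\ref{l:duality} via compatibility of the mixed pushforward with the equivariant pushforward.

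For freeness, first observe that the Schubert stratification $G/P=\coprod_{w\in W^P}C_w$ is $B$-stable, hence $S$-stable, and that each cell $C_w\cong\CC^{\ell(w)}$ carries a linear $S$-action through the characters of the unipotent root subgroups parametrizing it.  Borel mixing then realizes each $\cC_w:=\EE_m S\times^S C_w$ as a rank-$\ell(w)$ algebraic vector bundle over~$\PP$, exhibiting $\XX\to\PP$ as a cellular fibration in the sense of Chriss--Ginzburg.  Fixing a linear extension $w_0<w_1<\cdots<w_N$ of the Bruhat order on $W^P$ and setting $\XX^{(i)}:=\coprod_{j\le i}\cC_{w_j}$, I would induct on~$i$ using the $K$-theory localization sequence
\[
  K(\XX^{(i)})\longrightarrow K(\XX^{(i+1)})\longrightarrow K(\cC_{w_{i+1}})\longrightarrow 0,
\]
together with $K(\cC_{w_{i+1}})\cong K(\PP)$ (homotopy invariance for vector bundles) and the fact that $\ol\OO_{w_{i+1}}$ restricts to $[\OO_{\cC_{w_{i+1}}}]$, that is, to $1\in K(\PP)$.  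This yields that $\{\ol\OO_w\}_{w\in W^P}$ is a $K(\PP)$-basis of $K(\XX)$; the parallel argument with opposite cells produces $\{\ol\OO^w\}_{w\in W^P}$.

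For the duality, I would identify the proper pushforward $p_*\colon K(\XX)\to K(\PP)$ with the Borel mixing of the equivariant pushforward $\pi_{X,*}\colon K_S(X)\to R(S)$ of Section~\ref{sub:Ktheory}: this follows from $\XX\to\PP$ being the mixing of $X\to\mathrm{pt}$ together with flat base change, or equivalently from the Edidin--Graham comparison theorem underlying Proposition~\ref{p:approx}.  Consequently the pairing $\langle\alpha,\beta\rangle:=p_*(\alpha\cdot\beta)$ on $K(\XX)$ is the Borel mixing of the equivariant pairing from Section~\ref{sub:Ktheory}, and Lemma~\ref{l:duality} transfers to give $\langle\ol\OO_w,\ol\xi^v\rangle=\delta_{w,v}$ and $\langle\ol\OO^w,\ol\xi_v\rangle=\delta_{w,v}$, exhibiting $\{\ol\xi^w\}$ and $\{\ol\xi_w\}$ as the stated dual bases.

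The main point requiring care is the splitting at each step of the freeness induction: one needs the $K_1$-boundary in the localization sequence to vanish so that each extension is free over $K(\PP)$ rather than merely surjective.  This is the familiar subtlety in cellular fibration arguments; I would settle it by using the explicit classes $\ol\OO_{w_{i+1}}$ as $K(\PP)$-linear right inverses to the restriction map.  Once this splitting is in hand the remaining bookkeeping is routine, and the duality step requires nothing beyond the compatibility of mixed and equivariant pushforwards already built into Proposition~\ref{p:approx}.
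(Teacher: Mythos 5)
Your proof is correct, but it takes a genuinely different route for the freeness claim.  The paper's proof is a three-line base change: by the considerations in the proof of Proposition~\ref{p:approx}, $K(\XX)$ is the quotient $K_S(X)\otimes_{R(S)}K(\PP)$ of $K_S(X)$ along the surjection $R(S)\to K(\PP)$, so the $R(S)$-freeness of $K_S(X)$ (with the $\OO$ and $\xi$ bases) and the $R(S)$-bilinear duality of Lemma~\ref{l:duality} descend at once to $K(\PP)$-freeness and $K(\PP)$-bilinear duality for $K(\XX)$.  You instead re-derive freeness from scratch by mixing the Schubert stratification into a cellular fibration $\XX\to\PP$, filtering $\XX$ by closed unions of cell bundles and running the localization/homotopy-invariance induction with the classes $\ol\OO_w$ supplying the $K(\PP)$-linear splittings; for the duality you then transfer Lemma~\ref{l:duality} via compatibility of $p_*$ with the equivariant pushforward $\pi_{X,*}$, which in spirit matches what the paper does.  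Your route is longer but more self-contained---it does not take the $R(S)$-freeness of $K_S(X)$ as an input, and it makes the geometric source of the basis visible---while the paper's route is slicker and treats freeness and duality uniformly as a single base change.  Two small points deserve explicit statement in your write-up: the intermediate spaces $\XX^{(i)}$ are singular, so $K(\XX^{(i)})$ must be read as the Grothendieck group $G_0$ of coherent sheaves, which agrees with $K_0$ only at the top of the filtration where $\XX$ is smooth; and the compatibility of $p_*$ with $\pi_{X,*}$, which you invoke somewhat loosely, is cleanest via flat base change along the faithfully flat map $\EE_m S\to\PP$ in the fiber square of Section~\ref{sub:borel}, which identifies $R^ip_*\cF$ with the vector bundle $\EE_m S\times^S H^i(X,F)$ for the mixed sheaf $\cF$ of any $S$-equivariant sheaf $F$ on $X$.
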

\begin{proof}
The corresponding statement for $K_S(X)$ as an algebra over~$R(S)$ is
a consequence of Lemma~\ref{l:duality}.  The desired result follows
from the considerations in the proof of Proposition~\ref{p:approx}:
$K_S(\EE_m S \times X) = K(\XX)$ is the quotient of $K_S(\CC^{m \times
r} \times X) = K_S(X)$
modulo the kernel of the surjective homomorphism $R(S) \to K(\PP)$.
\end{proof}

\begin{cor}\label{c:basis}
The classes $[\OO_{\PP^J}]$ are a $\ZZ$-basis for~$K(\PP)$.  Set
\mbox{$\OO^J = p^*[\OO_{\PP^J}] \in K(\XX)$}.  The ordinary $K$-theory
$K(\XX)$ has additive $\ZZ$-bases
$$%
  \big\{\OO^J \cdot \ol\OO_w\big\}_{J,w}
  \ \text{ and }\ 
  \big\{\OO^J \cdot \ol\xi_w\big\}_{J,w},
  \quad\text{where}\quad
  J \in \{0,\ldots,m\}^r \text{ and } w \in W^P.
$$
Moreover, $\OO^J \cdot \ol\OO_w = [\OO_{\XX_w^J}]$ and $\OO_J \cdot
\ol\OO^w = [\OO_{\XX^w_J}]$, where \mbox{$\OO_J = p^*[\OO_{\PP_J}] \in
K(\XX)$}.\qed
\end{cor}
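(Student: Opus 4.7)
The plan is first to pin down a $\ZZ$-basis of $K(\PP)$, then combine it with the $K(\PP)$-bases supplied by Proposition~\ref{p:basis} to read off the claimed $\ZZ$-bases of $K(\XX)$, and finally to verify the multiplicative identifications using flatness of the fiber bundle $p\colon\XX \to \PP$.

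For the first step, the classical calculation $K(\PP^m) \cong \ZZ[x]/(x^{m+1})$---via the projective bundle formula, or the cellular paving of $\PP^m$---provides the additive $\ZZ$-basis $\{[\OO_{\PP^j}]\}_{j=0}^m$. Iterating the projective bundle formula for $\PP = (\PP^m)^{\times r}$ gives the K\"unneth identification $K(\PP) \cong K(\PP^m)^{\otimes r}$, under which the external tensor products $[\OO_{\PP^J}] = [\OO_{\PP^{m-j_1}}] \boxtimes \cdots \boxtimes [\OO_{\PP^{m-j_r}}]$ form a $\ZZ$-basis indexed by $J = (j_1,\ldots,j_r) \in \{0,\ldots,m\}^r$.

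Next, Proposition~\ref{p:basis} realizes $K(\XX)$ as a free $K(\PP)$-module, with $K(\PP)$-basis $\{\ol\OO_w\}_{w \in W^P}$ or alternatively $\{\ol\xi_w\}_{w \in W^P}$. Combining either of these $K(\PP)$-bases with the $\ZZ$-basis $\{[\OO_{\PP^J}]\}$ of $K(\PP)$ produces the two $\ZZ$-bases $\{\OO^J \cdot \ol\OO_w\}_{J,w}$ and $\{\OO^J \cdot \ol\xi_w\}_{J,w}$ of $K(\XX)$.

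To identify $\OO^J \cdot \ol\OO_w$ with $[\OO_{\XX_w^J}]$, I would use that $p\colon \XX \to \PP$ is Zariski-locally trivial with smooth fiber $G/P$, hence flat; therefore $p^*\OO_{\PP^J} = \OO_{p^{-1}(\PP^J)}$, with no higher derived pullback. The restriction $p|_{\XX_w}\colon \XX_w \to \PP$ is also a Zariski-locally trivial fiber bundle (with fiber $X_w$), so $\OO_{\XX_w}$ is flat over $p^{-1}\OO_\PP$. By the standard base-change formula this flatness forces the higher terms of $p^*\OO_{\PP^J} \otimes^L_{\OO_\XX} \OO_{\XX_w}$ to vanish, and hence
\[
\OO^J \cdot \ol\OO_w \;=\; [p^*\OO_{\PP^J} \otimes_{\OO_\XX} \OO_{\XX_w}] \;=\; [\OO_{p^{-1}(\PP^J) \cap \XX_w}] \;=\; [\OO_{\XX_w^J}].
\]
The companion identity $\OO_J \cdot \ol\OO^w = [\OO_{\XX^w_J}]$ follows by the identical argument with $\XX^w$ and $\PPJ$ in place of $\XX_w$ and $\PP^J$. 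No substantive obstacle arises: once flatness of the bundle projections is invoked the argument is routine, and the only real care required is in matching the sub/superscript $J$ conventions of Section~\ref{sub:restriction}.
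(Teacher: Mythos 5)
Your proof is correct and fills in exactly the details the paper leaves to the reader (the corollary is stated with a bare \qed, so there is no written argument to compare against). The three steps you give—cellular/projective-bundle computation of $K(\PP)$, combining the resulting $\ZZ$-basis with the $K(\PP)$-bases from Proposition~\ref{p:basis}, and flatness of $p|_{\XX_w}\colon\XX_w\to\PP$ to kill the higher Tor sheaves so that $\OO^J\cdot\ol\OO_w=[\OO_{\XX_w^J}]$—are the intended ones; the only cosmetic point is that the Tor vanishing is most cleanly phrased as the projection formula $p^*\OO_{\PP^J}\otimes^L i_*\OO_{\XX_w}\cong i_*\bigl(L(p\circ i)^*\OO_{\PP^J}\bigr)$ together with flatness of $p\circ i\colon\XX_w\to\PP$, rather than a ``base-change formula,'' but the substance is the same.
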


The importance of the $K$-classes $\OO^J$ and~$\OO_J$ on the mixing
space is that they provide a geometric interpretation for monomials in
the ``variables'' $1 - e^{-\lambda}$.

\begin{lemma}\label{l:monomials}
Let $J \in \{0,\ldots,m\}^r$.  The ordinary $K$-class $\OO^J \in
K(\XX)$ is the image of the equivariant ``monomial'' class
$(1-e^{-\beta_1})^{j_1} \cdots (1-e^{-\beta_r})^{j_r} \in K_S(X)$.
\end{lemma}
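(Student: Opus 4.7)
The plan is to rewrite $1 - e^{-\beta_i}$ in $K_S(X)$ as a Koszul-theoretic class, then multiply and pull back. First I would note that the ring map $K_S(X) \to K(\XX)$ of Proposition~\ref{p:approx} restricts to a map $R(S) \to K(\PP)$ sending $e^\lambda \mapsto [\OO(\lambda)]$, since by definition $\OO(\lambda) = \EE_m S \times^S \CC_\lambda$ is the mixing of the one-dimensional $S$-representation $\CC_\lambda$. Combined with the identification $\OO(\beta_i) \cong p_i^*\OO(1)$ recorded in Section~\ref{sub:bundles}, this sends $1 - e^{-\beta_i}$ to $1 - [p^* p_i^* \OO(-1)] \in K(\XX)$.

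Next I would invoke the standard Koszul resolution on $\PP^m$: the subvariety $\PP^{m-k} \subseteq \PP^m$ is cut out by a regular sequence of $k$ sections of $\OO(1)$, so that $[\OO_{\PP^{m-k}}] = (1 - [\OO(-1)])^k$ in $K(\PP^m)$. Pulling back via $p_i$ and multiplying over $i = 1, \ldots, r$ yields
$$
  \prod_{i=1}^r \bigl(1 - [p_i^*\OO(-1)]\bigr)^{j_i} = \prod_{i=1}^r [p_i^* \OO_{\PP^{m-j_i}}] = [\OO_{\PP^J}] \quad \text{in } K(\PP),
$$
where the last equality is transverse intersection: the preimages $p_i^{-1}(\PP^{m-j_i})$ sit along distinct factors of $\PP$, and their scheme-theoretic intersection is exactly $\PP^J$. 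Pulling back along $p\colon \XX \to \PP$ produces $p^*[\OO_{\PP^J}] = \OO^J$.

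Combining the two paragraphs, $\prod_i (1 - e^{-\beta_i})^{j_i}$ has image $\OO^J$ in $K(\XX)$, which is the claim. There is no real obstacle; the only things to keep straight are the sign convention in the $S$-action on $(\CC^{m+1} \minus 0)^{\times r}$ (already fixed in Section~\ref{sub:borel}, and precisely what makes $\OO(\beta_i) \cong p_i^*\OO(1)$ rather than its dual) and the fact that the composite $R(S) \to K_S(X) \to K(\XX)$ agrees with $R(S) \to K(\PP) \xrightarrow{p^*} K(\XX)$, which is immediate from the functoriality of the mixing construction applied to the structure map $X \to \mathrm{pt}$.
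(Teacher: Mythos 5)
Your proof is correct and is essentially the same as the paper's. The paper cites the short exact sequence $0 \to \OO(-\beta_i) \to \OO_\PP \to \OO_{H^i} \to 0$ for a single hyperplane (this is the length-one case of your Koszul resolution), observes that powers give $\OO^{d\beta_i} = (1 - e^{-\beta_i})^d$, and then applies the K\"unneth formula; your transverse-intersection argument for $\prod_i [p_i^*\OO_{\PP^{m-j_i}}] = [\OO_{\PP^J}]$ is exactly what K\"unneth amounts to here. The one thing you make explicit that the paper leaves implicit --- that the composite $R(S) \to K_S(X) \to K(\XX)$ agrees with $R(S) \to K(\PP) \xrightarrow{p^*} K(\XX)$ via functoriality of the mixing construction --- is a useful clarification, but it does not change the substance of the argument.
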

\begin{proof}
Use the exact sequence $0 \to \cL_{-\beta_i} \to \OO_\PP \to \OO_{H^i}
\to 0$, where $H^i = \PP^{\beta_i}$ is the component of the boundary
$\del\PP$ having $\PP^{m-1}$ in the $i^\th$ slot.  It immediately
implies that $\OO^{\beta_i} = 1 - e^{-\beta_i}$.  Clearly
$\OO^{d\beta_i} = (1 - e^{-\beta_i})^d$; now use the K\"unneth
formula.
\end{proof}

\begin{remark}
Viewing the Chow ring as the associated graded ring of $K$-theory, $1
- e^{-\lambda}$ gives rise to the class~$\lambda$ (the lowest degree
term in the expansion of $1 - e^{-\lambda}$ as a power series).  This
is another indication that $1 - e^{-\lambda}$ should be
considered~``positive''.
\end{remark}

Since $\XX$ is compact, its ordinary $K$-theory has a pairing given by
$\<\alpha,\beta\> = \chi(\alpha \cdot \beta)$, where $\chi\colon  K(\XX) \to
\ZZ$ is the Euler \mbox{characteristic}.

\begin{lemma}\label{l:bundle-duality}
Let $I,J \in \{0,\ldots,m\}^r$ and $v,w \in W^P$.  Using $(-\del)$ as
in Section~\ref{sub:restriction},
$$%
  \big\<\OO^J \cdot \ol\OO_w,\, \OO_I(-\del) \cdot
  \ol\OO^v(-\del)\big\> = \big\<\OO^J \cdot \ol\OO_w,\,
  [\OO_{\XX^v_I}(-\del)]\big\> = \delta_{(J,w),(I,v)}.
$$
\end{lemma}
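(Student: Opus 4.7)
The plan is to use the fiber-bundle structure $p\colon \XX \to \PP$ to separate the pairing into orthogonal ``base'' and ``fiber'' factors, applying the $K(\PP)$-duality of Proposition~\ref{p:basis} in the fiber direction and the K\"unneth formula in the base.

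\textbf{Step 1 (first equality).} The class $\OO_I(-\del)$ is the pullback $p^*[\OO_{\PP_I}(-\del\PP_I)]$, while $\ol\OO^v(-\del) = \ol\xi^v = [\OO_{\XX^v}(-\del\XX^v)]$ is supported on the sub-bundle $\XX^v \to \PP$ with fiber boundary $\del\XX^v = \bigcup_{u>v}\XX^u$. Since $p|_{\XX^v}$ is a locally trivial fiber bundle, pullback from $\PP$ is exact on sheaves supported on $\XX^v$, so the two classes are homologically transverse. Their product is therefore the structure sheaf of the scheme-theoretic intersection $\XX^v_I$ twisted by the combined boundary divisor $\del\XX^v_I$ of Section~\ref{sub:restriction}, yielding the first equality. (This is the boundary-twisted analog of the product identity in Corollary~\ref{c:basis}.)

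\textbf{Step 2 (projection formula and fiber duality).} Combining the first equality with $\OO^J = p^*[\OO_{\PP^J}]$ and applying the projection formula gives
\begin{align*}
\chi_\XX\big([\OO_{\XX^J_w}] \cdot [\OO_{\XX^v_I}(-\del)]\big)
 &= \chi_\XX\big(p^*[\OO_{\PP^J}] \cdot p^*[\OO_{\PP_I}(-\del)] \cdot \ol\OO_w \cdot \ol\xi^v\big) \\
 &= \chi_\PP\big([\OO_{\PP^J}] \cdot [\OO_{\PP_I}(-\del)] \cdot p_*(\ol\OO_w \cdot \ol\xi^v)\big).
\end{align*}
The dual-basis assertion of Proposition~\ref{p:basis} is precisely that $p_*(\ol\OO_w \cdot \ol\xi^v) = \delta_{w,v} \cdot [\OO_\PP]$, so this reduces to $\delta_{w,v} \cdot \chi_\PP\big([\OO_{\PP^J}] \cdot [\OO_{\PP_I}(-\del)]\big)$.

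\textbf{Step 3 (base pairing via K\"unneth).} The subvarieties $\PP_I, \PP^J \subseteq \PP$ are products of projective subspaces; their $K$-classes are independent of the choice of subspace in each factor, so we may compute with general-position representatives. These are smooth (in particular Cohen--Macaulay) and intersect properly factor by factor, hence are homologically transverse by \cite[Lemma~1]{brion}. The K\"unneth formula then factors the Euler characteristic as $\prod_k \chi\big(\OO_{\PP^{i_k-j_k}}(-1)\big)$ (interpreting the intersection as empty when any $i_k < j_k$), and each factor equals $1$ if $i_k = j_k$ and $0$ if $i_k > j_k$, so the product is $\delta_{I,J}$. Combining with Step~2 gives the Kronecker delta $\delta_{(J,w),(I,v)}$.

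The one nontrivial input is the identity $p_*(\ol\OO_w \cdot \ol\xi^v) = \delta_{w,v} \cdot [\OO_\PP]$, but this is exactly the content of the dual-basis statement in Proposition~\ref{p:basis} and so is already at hand.
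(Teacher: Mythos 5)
Your argument is correct and does precisely what the paper's one-line proof points to: the $K(\PP)$-duality of $\{\ol\OO_w\}$ against $\{\ol\xi^v\}$ (Proposition~\ref{p:basis}, i.e., \cite[Prop.~2.1]{GrKu}) is applied in the fiber direction after factoring through $p_*$ by the projection formula, and the base pairing $\chi_\PP\bigl([\OO_{\PP^J}]\cdot[\OO_{\PP_I}(-\del)]\bigr)=\delta_{J,I}$ is computed factorwise by K\"unneth on general-position linear subspaces. The Step~1 transversality is also sound---$p|_{\XX^v}$ is flat, so $\cF\mapsto p^*\cF\otimes\OO_{\XX^v}$ is exact and all higher $\Tor^\XX$ vanish---and the combined boundary you obtain agrees with $\del\XX^v_I$ as defined in Section~\ref{sub:restriction}, which is the role of \cite[Lemma~1]{brion} here.
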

\begin{proof}
Follow \cite[Prop.~2.1]{GrKu} and \cite[Lemma~1]{brion}; the details
are omitted.
\end{proof}

\section{Main theorems}\label{s:main}

\begin{thm}\label{t:main}
Let a torus $S \subseteq T$ with a positive basis
(Section~\ref{sub:full}) act on $X = G/P$.  Fix an $S$-stable
subvariety $Y \subseteq X$ and an $S$-stable, Cohen--Macaulay
effective divisor $\del \subset Y$ that supports an ample line bundle on~$Y$.
Let $\YY \subseteq \XX$ be the corresponding mixing spaces, which are
fiber bundles over\/~$\PP$.  Using the bases for $K(\XX)$ in
Corollary~\ref{c:basis}, define $c_{J,w}^{\,Y}$ and $d_{J,w}^{\,Y}$~by
$$%
  \big[\OO_\YY\big] =
  \sum_{J,w} c_{J,w}^{\,Y}\,\OO^J \cdot \ol\OO_w
  \quad\text{and}\quad
  \big[\OO_{\YY}(-\del)\big]=\sum_{J,w} d_{J,w}^{\,Y}\,\OO^J \cdot \ol\xi_w,
$$
the equations being in~$K(\XX)$.  If\/~$Y$ has rational singularities,
then
$$%
  (-1)^{\dim Y - \ell(w) + |J|} c_{J,w}^{\,Y}
  \quad\text{and}\quad
  (-1)^{\dim Y - \ell(w) + |J|} d_{J,w}^{\,Y}
$$ 
are nonnegative integers.
\end{thm}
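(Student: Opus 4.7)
The plan is to convert each coefficient into an Euler characteristic, use homological transversality to replace a derived tensor product by an honest sheaf on a nice intersection, and then use the vanishing theorem to see that only one cohomology group survives.

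\textbf{Step 1 (Coefficients as Euler characteristics).} Using the duality of Lemma \ref{l:bundle-duality}, read off
\[
  c_{J,w}^{\,Y} = \chi\bigl(\XX,\, [\OO_\YY]\cdot [\OO_{\XX^w_J}(-\del)]\bigr),
\]
and a parallel formula for $d_{J,w}^{\,Y}$, obtained by transferring the boundary twist from $\XX^w_J$ to $\YY$ so that it pairs correctly with $\ol\xi_w$ rather than $\ol\OO_w$. This reduces the theorem to a statement about the sign of a single integer-valued Euler characteristic on~$\XX$.

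\textbf{Step 2 (Apply weak Kleiman transversality).} Invoke Theorem \ref{t:kleiman}: the mixing group~$\Gamma$ (whose existence uses the positivity hypothesis on~$S$) acts on~$\XX$ with a dense open subset~$U\subseteq\Gamma$ such that for $\gamma\in U$ the translate $\gamma\ldot\XX^w_J$ is homologically transverse to $\YY$, and also to the boundary $\del\YY$ and to $\del\XX^w_J$. Since~$\Gamma$ acts by automorphisms of $\XX$, translation does not alter $K$-classes, so we may replace $\XX^w_J$ by $\gamma\ldot\XX^w_J$ in the Euler characteristic without changing it. Homological transversality then collapses the derived product to an honest sheaf:
\[
  c_{J,w}^{\,Y} = \chi\bigl(\YY \cap \gamma\ldot\XX^w_J,\,
                             \OO(-\del)\bigr),
\]
where $\OO(-\del)$ is the structure sheaf twisted by the (pulled-back) boundary, and similarly for $d_{J,w}^{\,Y}$ with the boundary twist sitting on the $\YY$ side.

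\textbf{Step 3 (Vanishing pins down the sign).} The intersection $\YY\cap\gamma\ldot\XX^w_J$ has dimension $\dim Y - \ell(w) + |J|$ (a proper intersection in a smooth ambient space, again by transversality and the dimension formulas $\dim \YY = \dim Y + \dim\PP$ and $\codim \XX^w_J = \ell(w) + \codim\PPJ$). Apply Theorem \ref{t:vanishing}: because $Y$ has rational singularities and $\del$ supports an ample line bundle, the cohomology $H^i$ of this intersection with the twisted structure sheaf vanishes for every $i$ less than the top dimension. Hence the Euler characteristic reduces to a single term
\[
  (-1)^{\dim Y - \ell(w) + |J|}\, h^{\dim Y - \ell(w) + |J|},
\]
which is a nonnegative integer after multiplication by the indicated sign. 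The argument for $d_{J,w}^{\,Y}$ is the symmetric one, with the roles of~$Y$ and~$X^w$ interchanged in the application of Theorem \ref{t:vanishing}.

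\textbf{Main obstacle.} Step 2 (replacing the derived tensor product by an actual sheaf on an intersection) is routine once Sierra's Theorem \ref{t:sierra} applies, but it requires that the group action be transverse to \emph{all} relevant subvarieties simultaneously, including the boundary strata of $\XX^w_J$ and of~$\YY$; this is where one must verify that the mixing group~$\Gamma$ (as constructed from the positivity hypothesis) truly acts with finitely many orbits whose closures include all of these strata. The genuinely hard work lies in Step 3: Theorem \ref{t:vanishing} is a relative Kawamata--Viehweg argument via a Bott--Samelson resolution, and checking that the twists on the intersection translate into an $f$-nef, $f$-big line bundle of the form required by Theorem \ref{t:kv}---with canonical divisors and boundary divisors matching correctly on both the $\YY$ side and the $\XX^w_J$ side---is the technical heart of the paper.
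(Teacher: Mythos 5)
Your proposal is correct and mirrors the paper's proof of Theorem~\ref{t:main}: express each coefficient as an Euler characteristic via Lemma~\ref{l:bundle-duality}, invoke the mixing group and Sierra's theorem (packaged here as Theorem~\ref{t:kleiman}) to realize that Euler characteristic on an honest intersection with a generic translate, and then apply the vanishing Theorem~\ref{t:vanishing} to isolate the top cohomology. One small imprecision in your ``Main obstacle'' paragraph: Sierra's hypothesis is not that $\Gamma$-orbit closures contain the boundary strata, but rather that the sheaf being translated, $\OO_{\XX^w_J}(-\del)$, is homologically transverse to the orbit closures $\XX_v$ — this is Lemma~\ref{l:bundle-orbits}, handled via the short exact sequence relating $\OO_{\XX^w_J}(-\del)$, $\OO_{\XX^w_J}$, and $\OO_\del$.
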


Just as positivity for cohomology is an immediate consequence of
Kleiman transversality, Theorem~\ref{t:main} is an immediate
consequence of the following ``positive'' homological interpretation
of the coefficients $c_{J,w}^{\,Y}$ and $d_{J,w}^{\,Y}$ resulting from
a generic translation.

\begin{thm}[Strong equivariant homological Kleiman transversality]\label{t:strong}
Assume the situation of Theorem~\ref{t:main}.  There is an algebraic
\emph{mixing group}~$\Gamma$ acting on~$\XX$ with finitely many
orbits, the closure of each being a mixing space~$\XX_w$ of some
Schubert variety $X_w \subseteq X$.  Fix a general closed point
$\gamma \in \Gamma$, and write $\gamma\cF$ for the pushforward of any
sheaf~$\cF$ on~$\XX$ under multiplication by $\gamma \in \Gamma$.
If\/~$Y$ has rational singularities,~then
$$%
  (-1)^{\dim Y - \ell(w) + |J|} c_{J,w}^{\,Y} = \dim H^{\dim Y -
  \ell(w) + |J|}\big(\YY\,\cap\,\gamma.\XX^w_J, \OO_{\YY\,\cap\,
  \gamma.\XX^w_J}(-\del)\big),
$$
where the boundary divisor is $\del = \del(\YY\,\cap\,\gamma.\XX^w_J)
= \YY \cap \del(\gamma.\XX^w_J)$, and
$$%
  (-1)^{\dim Y - \ell(w) + |J|} d_{J,w}^{\,Y} = \dim H^{\dim Y -
  \ell(w) + |J|}\big(\YY_J\,\cap\,\gamma.\XX^w, \OO_{\YY_J\,\cap\,
  \gamma.\XX^w}(-\del_\gamma)\big),
$$
where the boundary divisor is $\del_\gamma = \big(\del\YY_J\big) \cap
\gamma.\XX^w$, with $\del\YY_J = \big(\YY|_{\del\PPJ}\big) \cup
\big(\del\YY\big)|_{\PPJ}$.
\end{thm}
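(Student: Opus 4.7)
The plan is to combine three ingredients already introduced in the excerpt: duality of $K$-theoretic bases on the mixing space, the weak equivariant homological Kleiman transversality of Theorem~\ref{t:kleiman} (which in turn rests on Sierra's Theorem~\ref{t:sierra}), and the vanishing of Theorem~\ref{t:vanishing}. Each one does a single step of the argument.

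First, I would rewrite the structure constants as Euler characteristics. By Lemma~\ref{l:bundle-duality}, the bases $\{\OO^J\cdot\ol\OO_w\}$ and $\{[\OO_{\XX^v_I}(-\del)]\}$ of $K(\XX)$ are dual under the pairing $\langle\alpha,\beta\rangle=\chi(\alpha\cdot\beta)$; a symmetric version of that lemma, proved identically (trading which factor carries the boundary twist and interchanging $\PPJ$ with $\PP^J$), shows that $\{\OO^J\cdot\ol\xi_w\}$ is dual to $\{[\OO_{\XX^v_I}]\}$. Pairing each defining expansion of the theorem against the relevant dual basis yields
$$
c_{J,w}^{\,Y} = \chi\big([\OO_\YY]\cdot[\OO_{\XX^w_J}(-\del)]\big)
\quad\text{and}\quad
d_{J,w}^{\,Y} = \chi\big([\OO_\YY(-\del)]\cdot[\OO_{\XX^w_J}]\big).
$$
Because $\Gamma$ acts on $\XX$ by automorphisms, replacing either factor by its $\gamma$-translate leaves the class in $K(\XX)$ unchanged, hence leaves these Euler characteristics unchanged.

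Second, I would apply Theorem~\ref{t:kleiman}. Sierra's Theorem~\ref{t:sierra}, applied to the action of $\Gamma$ on $\XX$ (whose orbit closures are the mixing spaces of the Schubert varieties $X_v$), produces a Zariski-dense open subset $U\subseteq\Gamma$ such that for every $\gamma\in U$ both $\gamma.\OO_\YY$ and $\gamma.\OO_\YY(-\del)$ are homologically transverse to the corresponding Schubert factors above — one needs $Y$ itself to be homologically transverse to every $\Gamma$-orbit closure, which is the substance of Theorem~\ref{t:kleiman} via the Cohen--Macaulay intersection result \cite[Lemma~1]{brion}. Homological transversality collapses the derived tensor product to the structure sheaf of the scheme-theoretic intersection, so
$$
c_{J,w}^{\,Y} = \chi\big(\YY\cap\gamma.\XX^w_J,\,\OO_{\YY\cap\gamma.\XX^w_J}(-\del)\big)
$$
with $\del = \YY\cap\gamma.\del\XX^w_J$, and a parallel computation, in which the restriction to $\PPJ$ migrates from the Schubert factor onto $\YY$, gives
$$
d_{J,w}^{\,Y} = \chi\big(\YY_J\cap\gamma.\XX^w,\,\OO_{\YY_J\cap\gamma.\XX^w}(-\del_\gamma)\big)
$$
with $\del_\gamma$ the divisor described in the statement.

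Third, I would invoke Theorem~\ref{t:vanishing}. Since $Y$ has rational singularities and $\del\subset Y$ supports an ample line bundle, that theorem asserts that on each of the intersections above the cohomology $H^i$ of the boundary-twisted structure sheaf vanishes for every $i$ strictly less than the expected dimension $\dim Y-\ell(w)+|J|$. The Euler characteristic therefore reduces to the single top-degree term $(-1)^{\dim Y-\ell(w)+|J|}\dim H^{\dim Y-\ell(w)+|J|}$ of that sheaf, which is exactly the formula claimed. The main obstacle here is the second step — constructing $\Gamma$, identifying its orbit closures, and verifying Sierra's hypothesis — but that entire package is already available to us as Theorem~\ref{t:kleiman}, so given that theorem together with Theorem~\ref{t:vanishing} the present proof is essentially the assembly just described.
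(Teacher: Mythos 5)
Your proposal is correct and follows the same route the paper takes: express the coefficients as intersection-pairing Euler characteristics via Lemma~\ref{l:bundle-duality}, collapse the derived tensor product using Sierra's theorem applied to the $\Gamma$-action with Schubert-bundle orbit closures (this is exactly the content of Theorem~\ref{t:kleiman}), and then invoke Theorem~\ref{t:vanishing} to pick out the single top-degree term. One small imprecision worth flagging: the claim that translating a sheaf by $\gamma$ leaves its $K$-class unchanged does not follow merely from $\Gamma$ acting by automorphisms (an automorphism of a variety can certainly move $K$-classes); the relevant fact, which the paper uses explicitly, is that $\Gamma$ is \emph{connected}, so translation lies in the identity component of $\Aut(\XX)$ and therefore acts trivially on $K(\XX)$.
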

\begin{proof}
The group $\Gamma$ is constructed in Section~\ref{s:group}, and the
statement about its orbits is Lemma~\ref{l:orbits}.  The construction
of~$\Gamma$ is where positivity of the basis for the weight lattice
of~$S$ is crucial, for it guarantees that a certain vector bundle
possesses enough sections.  Knowing the set of orbit closures allows
us easily to express the coefficients $c_{J,w}^{\,Y}$
and~$d_{J,w}^{\,Y}$ as Euler characteristics in
Theorem~\ref{t:kleiman}, using Sierra's homological transversality
(Theorem~\ref{t:sierra}) for group actions that are not necessarily
transitive.  The desired result follows from the more difficult
Theorem~\ref{t:vanishing}, which says that each Euler characteristic
is an alternating sum of terms in which only the last can be nonzero.
\end{proof}

Having already explained the roles of Sections~\ref{s:group},
\ref{s:transverse}, and~\ref{s:vanishing} in the proof of
Theorem~\ref{t:strong}, let us complete the discussion by explaining
the roles of Sections~\ref{s:rat-sings} and~\ref{s:bs}.  The proof of
the vanishing result in Theorem~\ref{t:vanishing} is a modification of
Brion's proof of the corresponding vanishing for ordinary $K$-theory
\cite{brion}, which is modeled on a Kleiman-type transversality
argument.  The main difficulty in extending Brion's methods to our
situation is the failure of transitivity for our group action
on~$\XX$.  It requires us to produce an intermediate result on lifting
rational singularities under smooth morphisms in
Proposition~\ref{p:rational-sings}, and an explicit construction of
such smooth morphisms via Bott--Samelson resolutions of singularities
in Proposition~\ref{p:submersion}.

\section{Applications to positivity conjectures}\label{s:applications}

\begin{cor}\label{c:GrKu}
Fix a positive basis $\beta_1,\ldots,\beta_r$ for a torus $S \subseteq
T$ acting on $X = G/P$ (Section~\ref{sub:full}).  For any $S$-stable
subvariety $Y \subseteq X$ of~$X$ with rational singularities,
$$%
  [\OO_Y] = \sum_{w \in W^P} a_w \OO_w \quad \text{with}\quad
  (-1)^{\dim Y - \ell(w)} a_w \in \NN[e^{-\beta_i} - 1]_{i=1}^r
  \quad\text{for all } w \in W^P.
$$
Write $e^{-\alpha_i}_S$ for the image in $R(S)$ of $e^{-\alpha_i} \in
R(T)$.  If the basis $\beta_1,\ldots,\beta_r$ is full
(Section~\ref{sub:full}), then positivity for $[\OO_Y]$ holds with
$\NN[e^{-\alpha_i}_S - 1]_{i=1}^n$ in place of\/
\mbox{$\NN[e^{-\beta_i} - 1]_{i=1}^r$}.
\end{cor}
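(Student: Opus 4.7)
The plan is to deduce the corollary directly from Theorem~\ref{t:main} using the approximation machinery of Proposition~\ref{p:approx} together with the identification in Lemma~\ref{l:monomials}. First I would fix $m$ sufficiently large and apply Theorem~\ref{t:main} to obtain the finite expansion
\[
  [\OO_\YY] \;=\; \sum_{J,w} c_{J,w}^{\,Y}\,\OO^J \cdot \ol\OO_w
\]
in $K(\XX)$, in which each coefficient satisfies $(-1)^{\dim Y - \ell(w) + |J|} c_{J,w}^{\,Y} \in \NN$.

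Next I would assemble a candidate polynomial in $R(S)$ for each Schubert class by setting
\[
  A_w \;=\; \sum_J c_{J,w}^{\,Y}\,(1-e^{-\beta_1})^{j_1}\cdots(1-e^{-\beta_r})^{j_r},
\]
which is a genuine polynomial in the variables $1-e^{-\beta_i}$ because only finitely many $c_{J,w}^{\,Y}$ are nonzero. By Lemma~\ref{l:monomials}, each $A_w$ maps to $\sum_J c_{J,w}^{\,Y}\,\OO^J \in K(\PP)$ (provided $m$ exceeds every $j_i$ occurring), so the difference $[\OO_Y] - \sum_w A_w\,\OO_w \in K_S(X)$ has zero image in $K(\XX)$. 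Proposition~\ref{p:approx} then upgrades this to an equality in $K_S(X)$, and uniqueness of the Schubert expansion forces $a_w = A_w$. In particular, $a_w$ is genuinely a polynomial in the $1-e^{-\beta_i}$.

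To finish the first assertion I would rewrite $(1-e^{-\beta_i})^{j_i} = (-1)^{j_i}(e^{-\beta_i}-1)^{j_i}$, absorbing $(-1)^{|J|}$ into the sign, so that
\[
  (-1)^{\dim Y - \ell(w)} a_w \;=\; \sum_J (-1)^{\dim Y - \ell(w) + |J|} c_{J,w}^{\,Y}\,\prod_i (e^{-\beta_i}-1)^{j_i},
\]
whose coefficients are nonnegative integers by Theorem~\ref{t:main}. The full-basis statement is then immediate: if each $\beta_i$ is the restriction of some simple root $\alpha_{k_i}$, the set $\{e^{-\beta_i}-1\}$ is a subset of $\{e^{-\alpha_j}_S-1\}$, hence $\NN[e^{-\beta_i}-1]_{i=1}^r \subseteq \NN[e^{-\alpha_j}_S-1]_{j=1}^n$, and the same positivity statement holds verbatim in the larger polynomial ring.

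Modulo Theorem~\ref{t:main} no serious obstacle remains; the only delicate point is choosing $m$ large enough that no monomial in $A_w$ is truncated when mapped to $K(\PP)$, which is automatic once $m$ bounds all exponents appearing. The substantive content has already been accomplished by the Kleiman transversality and vanishing theorems upstream, and this corollary is essentially the translation of $K(\XX)$-positivity back into $R(S)$.
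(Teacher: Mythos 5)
Your proof is correct and follows essentially the same route as the paper's: apply Theorem~\ref{t:main} in $K(\XX)$, translate the $\OO^J$ coefficients into monomials in $1-e^{-\beta_i}$ via Lemma~\ref{l:monomials}, lift the identity to $K_S(X)$ by Proposition~\ref{p:approx}, and use $(1-e^{-\beta_i})^{j_i} = (-1)^{j_i}(e^{-\beta_i}-1)^{j_i}$ together with fullness for the last claim. The paper states this in a single sentence; you have simply written out the intermediate step of matching $a_w$ against the candidate polynomial $A_w$ via uniqueness of the Schubert expansion, which is implicit there.
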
  

\begin{proof}
Apply Proposition~\ref{p:approx} and Lemma~\ref{l:monomials} to the
statement of Theorem~\ref{t:main}, noting that $(e^{-\beta_i} - 1)^J =
(-1)^{|J|}(1 - e^{-\beta_i})^J$.  When the basis is full,
every monomial in $1 - e^{-\beta_1},\ldots,1 - e^{-\beta_r}$ is a
monomial in $1 - e^{-\alpha_1}_S,\ldots,1 - e^{-\alpha_n}_S$ by
definition.
\end{proof}

The previous corollary is a special case of
\cite[Conjecture~7.1]{GrKu}.  It suffices for the applications to
Schubert calculus, such as the following; we do not know if our
methods extend to handle the general case, where the subtorus~$S$ is
arbitrary.

\begin{cor}[{\cite[Conjecture~3.1]{GrKu}}]\label{c:GrKu'}
Let $X = G/P$.  Using the dual classes $\xi^w$
(Section~\ref{sub:Kflag}), the Laurent polynomials \mbox{$p_{uv}^w \in
R(T)$} defined~by
$$%
  \xi^u \xi^v = \sum_{w \in W^P} p_{uv}^w \xi^w
$$
have alternating coefficients when written in terms of the variables
$e^{-\alpha_i} - 1$:
$$%
  (-1)^{\ell(w)-\ell(u)-\ell(v)} p_{uv}^w \in
  \NN[e^{-\alpha_i}-1]_{i=1}^n.
$$
\end{cor}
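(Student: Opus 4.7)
The plan is to reduce Corollary~\ref{c:GrKu'} to Corollary~\ref{c:GrKu} through the classical diagonal trick. Consider the diagonal embedding $\Delta\colon G/P \hookrightarrow G/P \times G/P$ and view the target as the partial flag variety $(G \times G)/(P \times P)$ for the adjoint semisimple group $G \times G$, whose maximal torus is $T \times T$. Take the diagonal subtorus $S = T_\mathrm{diag} \subseteq T \times T$; it is isomorphic to $T$, and $\Delta(G/P)$ is $S$-stable. On the product, $W^{P \times P} = W^P \times W^P$ with length additive, and the Schubert and opposite Schubert varieties factor as $X_{(u,v)} = X_u \times X_v$ and $X^{(u,v)} = X^u \times X^v$. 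Since the boundary satisfies $\del X^{(u,v)} = (\del X^u \times X^v) \cup (X^u \times \del X^v)$ as an effective Weil divisor, the K\"unneth factorizations $\OO_{(u,v)} = \OO_u \boxtimes \OO_v$ and $\xi^{(u,v)} = \xi^u \boxtimes \xi^v$ both hold.

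First, I would identify the desired constant $p_{uv}^w$ with the coefficient of $\OO_u \boxtimes \OO_v$ in the Schubert expansion of $[\OO_{\Delta(X_w)}]$ in $K_S(G/P \times G/P)$. By Lemma~\ref{l:duality}, $p_{uv}^w = \langle \xi^u \cdot \xi^v,\, \OO_w\rangle_S$. Since $\xi^u \cdot \xi^v = \Delta^*(\xi^u \boxtimes \xi^v)$, the projection formula for the proper closed embedding $\Delta$, combined with $\pi'_* \Delta_* = \pi_*$, gives
$$
  p_{uv}^w \;=\; \langle \xi^u \boxtimes \xi^v,\; \Delta_*\OO_{X_w}\rangle_S.
$$
Writing the Schubert expansion $[\OO_{\Delta(X_w)}] = \sum_{u',v'} b_{u',v'}^{\,w}\, \OO_{u'} \boxtimes \OO_{v'}$ and pairing with $\xi^u \boxtimes \xi^v$ using the K\"unneth duality $\langle \xi^u \boxtimes \xi^v,\, \OO_{u'} \boxtimes \OO_{v'}\rangle_S = \delta_{u,u'}\,\delta_{v,v'}$ then yields $p_{uv}^w = b_{u,v}^{\,w}$.

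To finish, I would apply the full version of Corollary~\ref{c:GrKu} to the subvariety $Y = \Delta(X_w) \cong X_w$ of $(G \times G)/(P \times P)$. This $Y$ has rational singularities (Schubert varieties do, as recorded in Section~\ref{sub:flag}) and dimension $\ell(w)$. The subtorus $S = T_\mathrm{diag}$ admits a full basis given by $\beta_i = \alpha_i$, the $i$-th simple root of $G$ viewed as a character of $S$: each simple root of $G \times G$, being of the form $(\alpha_i, 0)$ or $(0, \alpha_j)$, restricts on $S$ to some $\alpha_k$, so the basis is both positive and full. Corollary~\ref{c:GrKu} therefore gives
$$
  (-1)^{\ell(w) - \ell(u) - \ell(v)}\, b_{u,v}^{\,w} \;\in\; \NN[e^{-\alpha_i} - 1]_{i=1}^n,
$$
which is the claim, since $p_{uv}^w = b_{u,v}^{\,w}$.

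The main obstacle is essentially bookkeeping rather than new geometry: correctly matching the product conventions (Bruhat order, boundary divisors, K\"unneth factorization of both $\OO_{(u,v)}$ and $\xi^{(u,v)}$), applying the projection formula with the right equivariance, and verifying fullness of $T_\mathrm{diag}$, which as the paper remarks in Section~\ref{sub:full} is the one and only place fullness is invoked. Beyond that, all positivity is supplied by Corollary~\ref{c:GrKu}, and all geometric input by the rational singularities of Schubert varieties.
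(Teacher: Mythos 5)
Your proof is correct and follows exactly the same route as the paper: apply Corollary~\ref{c:GrKu} to $X\times X$ with $S$ the diagonal subtorus of $T\times T$ and $Y = \Delta(X_w)$, using fullness of the diagonal torus. The paper's proof is a two-line pointer to this argument (attributed to a remark in Graham--Kumar); you have simply filled in the bookkeeping---the K\"unneth factorizations of $\OO_{(u,v)}$ and $\xi^{(u,v)}$, the projection-formula identification $p_{uv}^w = \langle \xi^u\boxtimes\xi^v,\,[\OO_{\Delta(X_w)}]\rangle_S$, and the verification that $\{\alpha_1,\dots,\alpha_n\}$ is a full basis for the weight lattice of $T_{\mathrm{diag}}$---all of which is accurate.
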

\begin{proof}
As Graham and Kumar remark before their Conjecture~7.1, apply
Corollary~\ref{c:GrKu} to $X \times X$, with $S$ the diagonal subtorus
of $T\times T$ and $Y$ the diagonal embedding of~$X_w$; it is key here
that this $S$ possesses a full basis for its weight lattice.
\end{proof}

Corollary~\ref{c:GrKu'} is dual to a positivity conjecture, formulated
previously by Griffeth and Ram, for the structure constants with
respect to the opposite Schubert class basis.  There does not seem to
be a direct way to derive one conjecture from the other: the formulas
expressing one set of structure constants in terms of the other
involve M\"obius inversion and are not manifestly positive.

\begin{cor}[{\cite[Conjecture~5.1]{GrRa}}]\label{c:GrRa}
Let $X = G/P$.  The Laurent polynomials \mbox{$c_{uv}^w \in R(T)$}
defined by
$$%
  \OO^u \cdot \OO^v = \sum_{w \in W^P} c_{uv}^w \OO^w \quad\text{for }
  u,v \in W^P
$$
have alternating coefficients when written in terms of the variables
$1 - e^{-\alpha_i}$:
$$%
  (-1)^{\ell(w)-\ell(u)-\ell(v)} c_{uv}^w \in
  \NN[e^{-\alpha_i}-1]_{i=1}^n.
$$
\end{cor}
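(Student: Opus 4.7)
The plan is to parallel the derivation of Corollary \ref{c:GrKu'}: reduce the problem to $X \times X$ under the diagonal torus action and apply Theorem \ref{t:main}. Let $\Delta \colon X \to X \times X$ be the diagonal embedding and $S = \mathrm{diag}(T) \subset T \times T$ the diagonal torus, acting on $(G \times G)/(P \times P) = X \times X$. Each simple root $(\alpha_i, 0)$ or $(0, \alpha_i)$ of $G \times G$ restricts to $\alpha_i$ on $S$, so the basis $\beta_i = \alpha_i$ of the weight lattice of $S$ is full in the sense of Section \ref{sub:full}; this is what will let the conclusion of Theorem \ref{t:main} be transcribed in terms of the $e^{-\alpha_i}$ for simple roots of $G$.

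The key duality step is this: by the Künneth formula and Lemma \ref{l:duality}, the bases $\{\OO^u \boxtimes \OO^v\}$ and $\{\xi_u \boxtimes \xi_v\}$ of $K_S(X \times X)$ are dual under the equivariant pairing. The push--pull identity therefore gives
$$c^w_{uv} \;=\; \langle \OO^u \cdot \OO^v,\, \xi_w \rangle_S \;=\; \langle \Delta^*(\OO^u \boxtimes \OO^v),\, \xi_w \rangle_S \;=\; \langle \OO^u \boxtimes \OO^v,\, \Delta_*\xi_w \rangle_S,$$
so that the structure constants $c^w_{uv}$ are precisely the coefficients in the Künneth $\xi$-basis expansion
$$\Delta_*\xi_w \;=\; \big[\OO_{\Delta(X_w)}(-\del)\big] \;=\; \sum_{u,v \in W^P} c^w_{uv}\, \xi_u \boxtimes \xi_v.$$

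It then remains to apply the $d$-version of Theorem \ref{t:main} on $X \times X$ to $Y = \Delta(X_w)$ with $\del Y = \Delta(\del X_w)$. Since $\Delta$ is a closed immersion, $Y \isom X_w$ has rational singularities; the divisor $\del X_w$ is Cohen--Macaulay and supports an ample line bundle on $X_w$ (standard for Schubert varieties, compatible with the canonical-sheaf description of Section \ref{sub:bundles}). With $\dim Y = \ell(w)$ and $\ell(u,v) = \ell(u) + \ell(v)$ in $W^{P \times P} = W^P \times W^P$, the theorem yields $(-1)^{\ell(w) - \ell(u) - \ell(v) + |J|}\, d^Y_{J,(u,v)} \in \NN$. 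Transferring back to $K_S(X \times X)$ via Proposition \ref{p:approx} and Lemma \ref{l:monomials}, each $\OO^J$ becomes $\prod_{i=1}^n (1 - e^{-\alpha_i})^{j_i}$; absorbing $(-1)^{|J|}$ through $1 - e^{-\alpha_i} = -(e^{-\alpha_i} - 1)$ converts the sign condition into the desired $(-1)^{\ell(w) - \ell(u) - \ell(v)}\, c^w_{uv} \in \NN[e^{-\alpha_i} - 1]_{i=1}^n$.

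The main obstacle is verifying that $\del X_w$ supports an ample line bundle on $X_w$, as this is the one geometric hypothesis of the $d$-version of Theorem \ref{t:main} beyond rational singularities, and the essential distinction between the $\OO$-basis structure constants treated here and the $\xi$-basis constants of Corollary \ref{c:GrKu'} is exactly that the boundary-twisted statement is required. The remaining pieces are routine: the push--pull identity, the Künneth duality of the $\OO$ and $\xi$ bases, and the sign bookkeeping for the substitution between the $(1 - e^{-\alpha_i})$ and $(e^{-\alpha_i} - 1)$ variables.
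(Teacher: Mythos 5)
Your proof is correct, but it follows a genuinely different path from the paper's. You parallel Corollary~\ref{c:GrKu'}: pass to $X\times X$ with the diagonal torus $S=\mathrm{diag}(T)$, take $Y=\Delta(X_w)$, and use the push--pull identity $c^w_{uv}=\langle\OO^u\boxtimes\OO^v,\Delta_*\xi_w\rangle$ together with K\"unneth duality to identify the structure constants with the $d$-coefficients of $\Delta_*\xi_w=[\OO_{\Delta(X_w)}(-\del)]$. The paper instead stays on $G/P$ itself with $S=T$: it writes $c^w_{uv}=\langle\OO^u,\,\OO^v\xi_w\rangle$, observes that $\OO^v\xi_w=[\OO_{X^v_w}(-X^v\cap\del X_w)]$ is the boundary-twisted structure sheaf of the Richardson variety $Y=X^v_w$ (dimension $\ell(w)-\ell(v)$), and applies Theorem~\ref{t:main} directly to this $Y$. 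The paper's route is more economical --- no passage to $X\times X$, no need to check fullness of the diagonal basis (positivity of the simple-root basis for $T$ itself is immediate) --- and it isolates the Richardson variety as the geometrically meaningful object whose twisted cohomology computes each $c^w_{uv}$. Your route has the virtue of being structurally identical to the proof of Corollary~\ref{c:GrKu'}, so the two corollaries are visibly twins under the exchange $\OO\leftrightarrow\xi$. In both approaches the remaining geometric input is the same: the complement of the boundary in $Y$ is an affine open subset (a Schubert cell in your case, $X^v\cap C_w$ in the paper's), so the boundary supports an ample divisor on a normal projective variety, and the sign bookkeeping with $(-1)^{|J|}$ is carried out identically.

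One small remark on your ampleness justification: the canonical-sheaf formula $\omega_{X_w}\isom e^{-\rho}\cL_{-\rho}\otimes\OO_{X_w}(-\del)$ does not by itself give ampleness of a divisor supported on $\del X_w$; the right reason is that $X_w\setminus\del X_w=C_w$ is affine and $X_w$ is normal projective, so an effective ample divisor supported on $\del X_w$ exists. This is the same fact the paper tacitly invokes for the Richardson variety.
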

\begin{proof}
The coefficient $c_{uv}^w$ is the pushforward of the product
\mbox{$\OO^u \OO^v \xi_w \in K_T(X)$} to a point.  The equivariant
class $\OO^v \xi_w = \big[X^v_w(-X^v\cap\del X_w)\big]$ is that of a
reflexive sheaf on a Richard\-son variety, with $Y = X^v_w$ and $\del
= \del X_w$ satisfying the hypotheses of Theorem~\ref{t:main}.  Now
apply the results in Section~\ref{s:approx} to the statement of
Theorem~\ref{t:main} (with $S = T$ and $w$ there replaced by~$u$
here), noting that \mbox{$(e^{-\alpha_i} - 1)^J = (-1)^{|J|}(1 -
e^{-\alpha_i})^J$}.
\end{proof}

\begin{remark}
As pointed out in \cite[Proposition~3.13]{GrKu},
Corollary~\ref{c:GrRa} is equivalent to ``signless'' positvity for
products in the basis of dualizing sheaves: writing
$$%
  [\omega_{X^u}]\cdot [\omega_{X^v}] = \sum_{w\in W^P} d_{uv}^w 
  [\omega_{X^w}]\cdot [\omega_{G/P}],
$$
the Laurent polynomials $d_{uv}^w \in R(T)$ satisfy
$$%
  d_{uv}^w \in \NN[e^{\alpha_i}-1]_{i=1}^n.
$$
\end{remark}

\begin{remark}
The positivity results in Corollaries~\ref{c:GrKu'} and~\ref{c:GrRa}
hold when restricted to arbitrary subtori $S \subseteq T$, even though
we can only show Theorem~\ref{t:main} for subtori with positive bases
for their weight lattices.  The reason is simply that the statements
of the corollaries restrict without obstacle to arbitrary subtori,
regardless of the proofs of the corollaries.  In particular, sign
alternation in ordinary $K$-theory follows from these equivariant
results.
\end{remark}

\section{A group action on the mixing space}\label{s:group}

For the duration of this section, set $X = G/P$, and fix a positive
basis $\{\beta_1,\ldots,\beta_r\}$ for the weight lattice of a
subtorus $S \subseteq T$ (Section~\ref{sub:full}).
 
The mixing space functor applied to the quotient map $G \to X$, where $S$
acts on~$G$ by left multiplication, expresses the mixing space $\XX$
as the quotient of the principal $G$-bundle $\cG$ by the action of the
parabolic subgroup~$P$ on the right.

On the other hand, let $\GG = \EE_m S \times^S G$, with $S$ acting on
$G$ by conjugation.  
Since $S$ acts by group automorphisms, $\GG$ is a group scheme over
$\PP$ with fiber~$G$.  Indeed, the evident multiplication map
$$%
  (\EE_m S \times G)\times_{\EE_m S}(\EE_m S \times G) \to \EE_m S\times G
$$
descends to $\GG\times_\PP \GG \to \GG$; the inverse map and identity
section are defined similarly and satisfy appropriate commutative
diagrams.  
Moreover, the action of $G$ on itself by left multiplication induces an action of 
the group scheme $\GG$ on the principal bundle $\cG$, and hence on the mixing space $\XX$.  
Note, however, that $\GG$ itself is \emph{not} a principal bundle,
since there is no right action of~$G$.

Let $B=TU$ be the Levi decomposition of $B$, with $U$ the maximal
unipotent group in $B$, and consider the corresponding group scheme
$$%
  \BB = \EE_m S \times^S B \subseteq \GG 
$$
over~$\PP$, where again $S$ acts on $B$ by conjugation.

Let $\Gamma_0 = \Hom(\PP,\GG)$ be the group of global sections
of~$\GG$, i.e., the $\PP$-points of this group scheme.  Write
$\Gamma_0(\BB) = \Hom(\PP,\BB)$; this is a connected algebraic group
over $\CC$.  The following asserts that the group scheme $\BB$ is
``generated by sections''.  It requires that the basis
$\{\beta_1,\ldots,\beta_r\}$ be positive.

\begin{lemma}\label{l:sections}
For any $x\in\PP$ and $p\in\BB$ in the fiber over $x$, there is a
section $\gamma\in\Gamma_0(\BB)$ such that $p=\gamma(x)$.
\end{lemma}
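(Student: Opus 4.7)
The plan is to exploit the Levi decomposition $B = TU$ to reduce the problem to two transparent pieces, on which positivity lets one write down sections by hand.

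First, the multiplication map $T \times U \to B$, $(t,u) \mapsto tu$, is an isomorphism of varieties that is $S$-equivariant when $S$ acts on $T$ trivially (a torus acts trivially on itself by conjugation, and $S \subseteq T$) and on $U$ by conjugation: indeed $s(tu)s^{-1} = t\bigl(sus^{-1}\bigr)$. Since the mixing construction takes products of $S$-schemes to fibre products over $\PP$, this yields an isomorphism $\BB \isom \cT \times_\PP \cU$ of $\PP$-schemes, writing $\cT = \EE_m S \times^S T$ and $\cU = \EE_m S \times^S U$. A section of $\BB$ through $p = (t_0,u_0) \in \BB_x$ is then the same data as a section of $\cT$ through $t_0$ together with a section of $\cU$ through $u_0$. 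For $\cT$ the $S$-action is trivial, so $\cT = \PP \times T$ is a trivial bundle, and the constant map $y \mapsto t_0$ works. It therefore suffices to handle $\cU$.

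Next, I fix an ordering $\alpha_1,\ldots,\alpha_N$ of the positive roots, for instance by non-decreasing height. Then multiplication yields a variety isomorphism $U_{\alpha_1} \times \cdots \times U_{\alpha_N} \isom U$ which is $S$-equivariant, because each $U_{\alpha_\ell}$ is stable under conjugation by $T \supseteq S$ and $S$ acts on $U$ by group automorphisms. Applying the mixing functor, and identifying $U_{\alpha_\ell} \isom \CC_{\alpha_\ell|_S}$ as a one-dimensional $S$-representation, we obtain
$$
  \cU \;\isom\; \OO(\alpha_1|_S) \times_\PP \cdots \times_\PP \OO(\alpha_N|_S),
$$
so a section of $\cU$ through $u_0$ is precisely a tuple of global sections $s_\ell \in H^0(\PP,\OO(\alpha_\ell|_S))$ with prescribed values at $x$.

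Finally, positivity of the basis $\{\beta_1,\ldots,\beta_r\}$ produces such sections. Writing $\alpha_\ell|_S = \sum_i c_{\ell,i}\beta_i$ with $c_{\ell,i} \in \NN$, the line bundle $\OO(\alpha_\ell|_S) = \bigotimes_i p_i^*\OO(c_{\ell,i})$ is a tensor product of nonnegative Serre twists pulled back from the factors $\PP^m$ of $\PP$, and is therefore globally generated. Hence the evaluation map $H^0\bigl(\PP,\OO(\alpha_\ell|_S)\bigr) \to \OO(\alpha_\ell|_S)_x$ is surjective, so any fibre value can be realized. Combining the resulting sections of the line bundles with the constant $T$-section from the first step yields the desired $\gamma \in \Gamma_0(\BB)$ with $\gamma(x) = p$. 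The one step where positivity is genuinely essential is this last one: without it some $\OO(\alpha_\ell|_S)$ could fail to be globally generated (indeed might have no global sections at all), which would be the real obstacle; the preceding reductions are purely formal.
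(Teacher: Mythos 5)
Your proof is correct and follows essentially the same route as the paper's: reduce via the Levi decomposition $B = TU$ to the torus part (where the $S$-action is trivial, so $\cT = \PP \times T$ and constant sections suffice) and the unipotent part (where $\UU$ splits as a direct sum of line bundles $\OO(\alpha|_S)$ over positive roots, which are globally generated precisely by the positivity hypothesis on the basis $\{\beta_i\}$). The only cosmetic difference is that you phrase the reduction via the $S$-equivariant variety isomorphism $T \times U \to B$, whereas the paper invokes the group structure on $\Gamma_0(\BB)$ to reduce directly to the cases $p \in \TT$ or $p \in \UU$; these are interchangeable.
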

\begin{proof}
Write $\TT = \EE_m S \times^S T$ and $\UU = \EE_m S \times^S U$ for
the corresponding groups over~$\PP$.  We may assume $p \in \TT$ or $p
\in \UU$.

A section of~$\TT$ is an $S$-equivariant map $\EE_m S \to T$.  Since
$S$ acts trivially on~$T$, this is the same as a map $\EE_m S/S = \PP
\to T$.  These are exactly the constant maps, since $\PP$ is
projective and $T$ is affine, so sections of~$\TT$ are identified
with~$T$; in particular, every point of every fiber of~$\TT$ is in the
image of some section.

Forgetting the group structure, upon fixing a parametrization for each
root subgroup $\UU$ becomes a vector bundle on~$\PP$ which splits as a
sum of line bundles: $\UU = \bigoplus_\alpha \OO(\alpha)$, where the
sum runs over the subset of positive roots that are non-trivial upon
restriction to $S$.  The positive roots $\alpha \in R^+$ restrict to
nonnegative integer linear combinations of $\beta_1,\dots,\beta_r$, by
our positivity assumption, and it follows that $\UU$ is generated by
sections as a vector bundle.  The lemma follows from this.
\end{proof}

The action of $(GL_{m+1})^r$ on $\PP$ induces a natural action on
$\Gamma_0(\BB)$, by precomposition with the projection to~$\PP$.

\begin{defn}\label{d:mixing}
The \emph{mixing group} is the semidirect product
$$%
  \Gamma = \Gamma_0(\BB) \rtimes (GL_{m+1})^r.
$$
\end{defn}
Thus there is an exact sequence \mbox{$1 \to \Gamma_0(\BB) \to \Gamma
\to (GL_{m+1})^r \to 1$}.  As a semidirect product of connected groups, 
$\Gamma$ is also a connected algebraic group.

\begin{lemma}\label{l:orbits}
The mixing group $\Gamma$ acts on the mixing space~$\XX$ of $X = G/P$
with finitely many orbits, the closure of each orbit being a
bundle~$\XX_w$ over~$\PP$ associated to some Schubert variety $X_w
\subseteq X$.
\end{lemma}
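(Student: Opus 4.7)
The plan is to identify the $\Gamma$-orbits on $\XX$ with the cell bundles $\cC_w = \EE_m S \times^S C_w$ for $w \in W^P$. First, I will unpack the action of $\Gamma$. Each section $\gamma \in \Gamma_0(\BB)$ acts on $\XX$ fiberwise: via the $\GG$-action on $\XX$ restricted to $\BB$, the element $\gamma(x) \in \BB_x$ acts on the fiber $\XX_x$ over each $x \in \PP$. The second factor $(GL_{m+1})^r$ acts on $\EE_m S = (\CC^{m+1} \smallsetminus 0)^{\times r}$ linearly on each coordinate, hence commutes with the scalar $S$-action, and so descends to $\XX = \EE_m S \times^S X$ via $h \cdot [e,y] = [h \cdot e, y]$, covering the transitive action on $\PP$. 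These combine into the semidirect product action of $\Gamma$.

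Next, since $S \subseteq B$ preserves each Schubert cell $C_w$ under left multiplication, the locally closed subvariety $\cC_w = \EE_m S \times^S C_w \subseteq \XX$ is well-defined, and the cell decomposition $X = \coprod_{w \in W^P} C_w$ yields $\XX = \coprod_{w \in W^P} \cC_w$. Each $\cC_w$ is visibly $\Gamma$-stable: the $(GL_{m+1})^r$-factor preserves the second coordinate in $[e,y]$, while $\Gamma_0(\BB)$ acts fiberwise as $B$ on $G/P$, which stabilizes each $C_w$.

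The key step is to show that $\Gamma$ acts transitively on each $\cC_w$. Given $y, y' \in \cC_w$ projecting to $x, x' \in \PP$, transitivity of $(GL_{m+1})^r$ on $\PP$ supplies $h$ with $h \cdot x = x'$, so $h \cdot y$ and $y'$ lie in the common fiber $\XX_{x'}$ and both represent points of $C_w$. Choosing any preimage of $x'$ in $\EE_m S$ trivializes $\XX_{x'} \isom X$ and $\BB_{x'} \isom B$ compatibly with the actions, and because $B$ acts transitively on $C_w \subseteq G/P$, some $b \in \BB_{x'}$ sends $h \cdot y$ to $y'$. Lemma~\ref{l:sections} lifts $b$ to a global section $\gamma \in \Gamma_0(\BB)$ with $\gamma(x') = b$, and then $\gamma \cdot h \in \Gamma$ sends $y$ to $y'$. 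Hence the $\Gamma$-orbits are precisely the finitely many $\cC_w$ indexed by $W^P$.

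Finally, since the quotient $\EE_m S \to \PP$ is a principal $S$-bundle, taking associated bundles commutes with taking closures, so $\overline{\cC_w} = \EE_m S \times^S \overline{C_w} = \XX_w$, the mixing space of the Schubert variety $X_w$, as required. The main obstacle is the transitivity step: passing from the fiberwise transitivity of $B$ on $C_w$ to transitivity realized by \emph{global} sections of $\BB$. This is precisely where Lemma~\ref{l:sections}---and therefore the positivity hypothesis on the basis $\beta_1,\ldots,\beta_r$---is indispensable, since without positivity the unipotent bundle $\UU$ would fail to be generated by global sections and only partial lifts of $b$ over open subsets of $\PP$ would be available.
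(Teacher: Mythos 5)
Your proof is correct and follows essentially the same approach as the paper's (which is considerably more terse): identify the orbits with the cell bundles, use the transitive $(GL_{m+1})^r$-action on the base $\PP$ to reduce to a single fiber, and invoke Lemma~\ref{l:sections} to realize the fiberwise $B$-action on $C_w$ by global sections of $\BB$. You have merely spelled out in detail what the paper compresses into the sentence ``Lemma~\ref{l:sections} implies that the fiber of a $\Gamma$-orbit over a point $p\in\PP$ is a $B$-orbit.''
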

\begin{proof}
The action of $\Gamma_0(\BB)$ is clear, and $(GL_{m+1})^r$ acts via
its action on $\EE_m S$ (lifting the action on $\PP$).
Lemma~\ref{l:sections} implies that the fiber of a $\Gamma$-orbit over
a point $p \in \PP$ is a $B$-orbit.  The result follows from this and
the definition of~$\XX_w$.
\end{proof}

\begin{remark}
Let $Q \subseteq G$ be the parabolic subgroup generated by~$B$ and the
centralizer of~$S$, so the Levi decomposition of~$Q$ is $LU_Q$ with $L
= C_G(S)$ the centralizer of~$S$, and $U_Q$ the unipotent radical.
Let $\QQ = \EE_m S \times^S Q$ be the corresponding group scheme
over~$\PP$.  Then $\BB\subseteq \QQ \subseteq \GG$, and the above
discussion applies with $\BB$ replaced by $\QQ$, noting that $S$ acts
trivially on $L$.  The orbits of $\Gamma_0(\QQ)\rtimes(GL_{m+1})^r$
are the bundles associated to the $Q$-orbits in $X$.

One can show that $\Gamma_0(\QQ) = \Gamma_0$ is the largest group
generated by sections in the sense of Lemma~\ref{l:sections}.  If the
torus $S$ is regular, i.e., $C_G(S) = T$, then $Q = B$ and~$\QQ =
\BB$.
\end{remark}

\section{Generic homological transversality}\label{s:transverse}

This section reduces the computation of the coefficients from
Theorem~\ref{t:main} to an Euler characteristic using an equivariant
homological Kleiman transversality principle in
Theorem~\ref{t:kleiman}.  Again let $\XX$ be the mixing space of $X =
G/B$, with the action of~$\Gamma$.  By Lemma~\ref{l:orbits}, the orbit
closures of the $\Gamma$-action are the Schubert bundles~$\XX_v$.

\begin{lemma}\label{l:bundle-orbits}%
The coherent sheaves $\OO_{\XX^w_J}$ and $\OO_{\XX^w_J}(-\del)$
on~$\XX$ are homologically transverse to the orbit closures~$\XX_v$ of
the $\Gamma$-action on~$\XX$.
\end{lemma}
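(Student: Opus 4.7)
The plan is to apply \cite[Lemma~1]{brion} (recalled in Section~\ref{sub:transversality}): two Cohen--Macaulay subvarieties of a smooth variety that meet properly are homologically transverse. The ambient $\XX$ is smooth, and by Lemma~\ref{l:orbits} the $\Gamma$-orbit closures are the Schubert bundles $\XX_v$; each is Cohen--Macaulay because it is Zariski-locally a product of the smooth base $\PP$ with the Cohen--Macaulay fiber $X_v$. The subvariety $\XX^w_J$ and its boundary $\del\XX^w_J$ are Cohen--Macaulay by \cite[Lemma~4]{brion}, as already noted in Section~\ref{sub:restriction}. It therefore suffices to check that $\XX^w_J$ and $\del\XX^w_J$ each meet $\XX_v$ properly in~$\XX$.

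For the intersection $\XX^w_J \cap \XX_v$, observe that scheme-theoretically this is the restriction to $\PPJ$ of the mixing space of the Richardson variety $X_v^w = X_v \cap X^w$, i.e., a bundle over $\PPJ$ with fiber $X_v^w$. In $X = G/B$, the Richardson variety $X_v^w$ is empty unless $v \geq w$, in which case it is irreducible of codimension $\ell(w) + (\dim X - \ell(v)) = \codim_X X^w + \codim_X X_v$. Hence $\XX^w_J \cap \XX_v$ has codimension
$$
  (\dim\PP - |J|) + \ell(w) + (\dim X - \ell(v)) = \codim_\XX \XX^w_J + \codim_\XX \XX_v
$$
in $\XX$, so the intersection is proper. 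Applying \cite[Lemma~1]{brion} yields $\Tor^\XX_j(\OO_{\XX^w_J}, \OO_{\XX_v}) = 0$ for all $j \geq 1$, proving the first assertion.

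For the twist $\OO_{\XX^w_J}(-\del)$, the approach is to use the short exact sequence
$$
  0 \to \OO_{\XX^w_J}(-\del) \to \OO_{\XX^w_J} \to \OO_{\del\XX^w_J} \to 0.
$$
From the explicit description in Section~\ref{sub:restriction}, every irreducible component of $\del\XX^w_J$ is again of the form $\XX^{w'}_{J'}$, with either $w'$ covering $w$ and $J' = J$, or $w' = w$ and $J'$ obtained from $J$ by decreasing one coordinate by~$1$. Applied componentwise, the codimension count of the previous paragraph shows that $\del\XX^w_J$ meets $\XX_v$ properly in $\XX$. Invoking \cite[Lemma~1]{brion} once more gives $\Tor^\XX_j(\OO_{\del\XX^w_J}, \OO_{\XX_v}) = 0$ for $j \geq 1$, and the long exact sequence of Tors against $\OO_{\XX_v}$ propagates this vanishing to $\OO_{\XX^w_J}(-\del)$. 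The only subtlety is verifying that proper intersection survives the passage to the reducible divisor $\del\XX^w_J$; this is immediate because every component has the same codimension in $\XX$ and each intersects $\XX_v$ with the expected codimension by the first paragraph's count.
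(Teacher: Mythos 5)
Your proof is correct and follows essentially the same route as the paper: both reduce to the Cohen--Macaulay-plus-proper-intersection criterion (Brion's Lemma~1), identify $\XX^w_J \cap \XX_v$ as a Richardson bundle restricted to $\PPJ$, and deal with the twist $\OO_{\XX^w_J}(-\del)$ via the short exact sequence involving $\OO_{\del\XX^w_J}$. You simply spell out the codimension count and the component structure of $\del\XX^w_J$ more explicitly than the paper does.
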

\begin{proof}
Consider the mixing spaces $\XX^w_J$ and~$\XX_v$.  These bundles
over~$\PP_J$ and~$\PP$, respectively, intersect in the bundle that is
the restriction to~$\PP_J$ of the mixing space~$\XX^w_v$ of a
Richardson variety.  All of these spaces are Cohen--Macaulay, and the
intersections are proper, so Section~\ref{sub:transversality} applies,
and we see that $\OO_{\XX^w_J}$ is homologically transverse to orbit
closures.  Similarly, $\del = \del\XX^w_J$ is Cohen--Macaulay and
intersects $\XX_v$ properly, so $\OO_\del$ is homologically transverse
to orbit closures.  The claim for $\OO_{\XX^w_J}(-\del)$ follows from
the exact sequence $0 \to \OO_{\XX^w_J}(-\del) \to \OO_{\XX^w_J} \to
\OO_\del \to 0$.
\end{proof}

\begin{thm}[Equivariant homological Kleiman transversality]\label{t:kleiman}
Fix notation as in Theorems~\ref{t:main} and~\ref{t:strong}, but
weaken the hypotheses to allow the Cohen--Macaulay subvariety $Y\!$
not to have rational singularities.  Then
$$%
  c_{J,w}^{\,Y} = \chi\big(\YY\,\cap\,\gamma.\XX^w_J, \OO_{\YY\,\cap\,
  \gamma.\XX^w_J}(-\del)\big)
  \quad\text{and}\quad
  d_{J,w}^{\,Y} = \chi\big(\YY_J\,\cap\,\gamma.\XX^w, \OO_{\YY_J\,\cap\,
  \gamma.\XX^w}(-\del_\gamma)\big).
$$
\end{thm}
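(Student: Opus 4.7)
My plan is to extract $c_{J,w}^Y$ and $d_{J,w}^Y$ from $[\OO_\YY]$ and $[\OO_\YY(-\del)]$ via the duality of Lemma~\ref{l:bundle-duality}, translate one factor by a generic point $\gamma \in \Gamma$, and then invoke Sierra's homological transversality (Theorem~\ref{t:sierra}) to replace the $K$-theoretic product with the class of an honest scheme-theoretic intersection.

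For the first identity, pairing $[\OO_\YY] = \sum_{J,w} c_{J,w}^Y\,\OO^J \cdot \ol\OO_w$ against the dual basis element $[\OO_{\XX^w_J}(-\del)]$ furnished by Lemma~\ref{l:bundle-duality} gives
$$
  c_{J,w}^Y = \chi\bigl([\OO_\YY] \cdot [\OO_{\XX^w_J}(-\del)]\bigr).
$$
For the second, the dual of $\OO^J \cdot \ol\xi_w$ is $\OO_J(-\del) \cdot \ol\OO^w$, obtained by combining the $K(\PP)$ half of Lemma~\ref{l:bundle-duality} with the Schubert duality $\langle \ol\OO^w, \ol\xi_v \rangle = \delta_{w,v}$ from Lemma~\ref{l:duality}. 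This class factors as $p^*[\OO_{\PPJ}(-\del)] \cdot [\OO_{\XX^w}]$, so
$$
  d_{J,w}^Y = \chi\bigl([\OO_\YY(-\del)] \cdot p^*[\OO_{\PPJ}(-\del)] \cdot [\OO_{\XX^w}]\bigr).
$$

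Next I would move the rightmost factor by a closed point $\gamma \in \Gamma$, translating all of $\XX^w_J$ in the first case and just $\XX^w$ in the second. Because $\Gamma$ is a connected algebraic group (Section~\ref{s:group}), these translations preserve $K$-classes, so neither coefficient changes. Lemma~\ref{l:bundle-orbits} says that both $\OO_{\XX^w_J}(-\del)$ and (after factoring out $p^*\OO_{\PPJ}(-\del)$ against $\OO_\YY(-\del)$) $\OO_{\XX^w}$ are homologically transverse to all $\Gamma$-orbit closures, which by Lemma~\ref{l:orbits} are precisely the Schubert bundles $\XX_v$. Sierra's theorem then produces a dense open $U \subseteq \Gamma$ such that for every $\gamma \in U$ all higher Tor sheaves between $\OO_\YY$ (respectively $\OO_\YY(-\del)$) and the translated class vanish.

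With higher Tors killed, each $K$-theoretic product becomes the class of a genuine tensor product of coherent sheaves. Tensoring the short exact sequence $0 \to \OO_{\XX^w_J}(-\del) \to \OO_{\XX^w_J} \to \OO_{\del\XX^w_J} \to 0$ against $\OO_\YY$ identifies the first product with $[\OO_{\YY \cap \gamma.\XX^w_J}(-\del)]$, where the boundary is the restriction $\YY \cap \gamma.\del\XX^w_J = \del(\YY \cap \gamma.\XX^w_J)$. For the $d$ formula, performing the product $\OO_\YY(-\del) \otimes p^*\OO_{\PPJ}(-\del)$ first produces the intermediate sheaf $\OO_{\YY_J}(-\del\YY_J)$ with $\del\YY_J = (\YY|_{\del\PPJ}) \cup (\del\YY)|_{\PPJ}$ as in the statement of Theorem~\ref{t:strong}, after which tensoring against $\OO_{\gamma.\XX^w}$ delivers $[\OO_{\YY_J \cap \gamma.\XX^w}(-\del_\gamma)]$. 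Taking Euler characteristics yields both formulas. The main obstacle I foresee is the boundary-divisor bookkeeping: one must verify that each stage of the iterated tensor product cuts out exactly the claimed Weil divisor rather than introducing embedded components or discarding pieces of the ambient boundary. This should follow from properness of all intersections (by the genericity of $\gamma$) together with the Cohen--Macaulay hypotheses of Lemma~\ref{l:bundle-orbits} applied on each piece $\XX^w_J$, $\del\XX^w_J$, $\YY$, and $\del\YY$.
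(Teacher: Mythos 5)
Your proof is correct and takes essentially the same approach as the paper: pair against the dual basis element from Lemma~\ref{l:bundle-duality}, use connectedness of $\Gamma$ so that translating by a generic $\gamma$ preserves the Euler characteristic, and then invoke Sierra's theorem together with Lemma~\ref{l:bundle-orbits} to identify the $K$-theoretic product with the class of the scheme-theoretic intersection sheaf. You actually spell out the $d$-coefficient case (factoring the dual class as $p^*[\OO_{\PPJ}(-\del)]\cdot[\OO_{\XX^w}]$ and performing the two-stage tensor product) in more detail than the paper, which merely asserts that argument is essentially the same.
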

\begin{proof}
By Lemma~\ref{l:bundle-duality}, we have
$$%
\Big\<\big[\OO_\YY\big], \big[\OO_{\XX^w_J}(-\del)\big]\Big\>
  = \Big\<\sum_{I,v} c_{I,v}^{\,Y} \OO^I\cdot\ol\OO_v,
    \big[\OO_{\XX^w_J}(-\del)\big]\Big\>
  = c_{J,w}^{\,Y}.
$$
On the other hand, Theorem~\ref{t:sierra} and
Lemma~\ref{l:bundle-orbits} guarantee that for general $\gamma \in
\Gamma$,
$$%
  \big[\OO_\YY\big] \cdot \big[\gamma\OO_{\XX^w_J}(-\del)\big] = 
  \big[\OO_{\YY \,\cap\, \gamma.\XX^w_J}(-\del)\big].
$$
Since $\Gamma$ is connected, we have 
$$%
\big\<\big[\OO_\YY\big],\, \big[\OO_{\XX^w_J}(-\del)\big]\big\>
= \chi\big(\big[\OO_\YY\big] \cdot
\big[\OO_{\XX^w_J}(-\del)\big]\big) = \chi\big(\big[\OO_\YY\big] \cdot
\big[\gamma\OO_{\XX^w_J}(-\del)\big]\big),
$$
and the theorem for $c_{J,w}^{\,Y}$
follows.  The proof for $d_{J,w}^{\,Y}$ is essentially the same.
\end{proof}

\section{Rational singularities}\label{s:rat-sings}

A \emph{desingularization} of a variety $X$ is a nonsingular
variety~$\wt{X}$ together with a proper birational map $f\colon  \wt{X} \to
X$.  As is well known, desingularizations exist for any complex
variety $X$.  Moreover, if $X$ is equipped with the action of an
algebraic group, the desingularization may be chosen so that the
action extends to $\wt{X}$ and the map $f$ is equivariant.  If
$D\subseteq X$ is a divisor (invariant for the group action), one can
also arrange that $f^{-1}D$ be a normal crossings divisor in $\wt{X}$.

If $X$ is a possibly non-reduced scheme, a desingularization of $X$ 
is a desingularization of the underlying variety $X_\mathit{red}$.

A variety $X$ has \emph{rational singularities} if $X$ is normal and
it has a desingularization $f\colon  \wt{X} \to X$ such that 
 $\RR^i f_*(\OO_\wt{X}) = 0$  for all $i > 0$. 
Equivalently, $X$ has rational singularities if it is Cohen--Macaulay
and
 $f_*\omega_\wt{X} \isom \omega_X$ 
for a desingularization \mbox{$f\colon \wt{X} \to X$}.  In fact, if either of these
conditions holds for some desingularization of $X$, then it holds for
all of them.

A morphism of nonsingular varieties $f\colon  X\to Y$ is \emph{smooth} if
the differential \linebreak $df_x \colon T_x X \to T_{f(x)} Y$ is surjective for all $x
\in X$.  (In differential geometry, this is the same as a submersion.)
A smooth morphism is flat (see e.g.\ \cite[III.10,
Theorem~3$'$]{Mum}), and is an open map.

Our proof of the vanishing result in Theorem~\ref{t:vanishing}
requires the following fact.

\begin{prop}\label{p:rational-sings}
Fix a nonsingular complex variety~$X$.  Let $W$ and $Y$ be varieties
with rational singularities, with morphisms $u\colon W\to X$ and $v\colon Y\to X$.
Let \mbox{$\phi\colon  \wt{W} \to W$} and \mbox{$\psi\colon \wt{Y} \to Y$} be
desingularizations.  If\/ $W \to X$ is flat with reduced fibers, and $\wt{W} \to X$ is smooth, then  
\mbox{$\wt{W} \times_X \wt{Y} \to W \times_X Y$} is a desingularization and 
$W \times_X Y$ has rational singularities.
\end{prop}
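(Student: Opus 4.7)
The plan is to factor the map $\wt Z := \wt W \times_X \wt Y \to W \times_X Y =: Z$ through the intermediate variety $Z_1 := \wt W \times_X Y$ and show each leg is a rational resolution. As easy preliminaries, $\wt Z$ is nonsingular because the projection $\wt Z \to \wt Y$ is smooth (base change of the smooth $\wt W \to X$) with nonsingular target; and $\wt Z \to Z$ is proper as a composite of base changes of proper morphisms. Reducedness of $Z$ itself follows from $Z \to Y$ being flat with reduced fibers (base change of $u$) over the reduced variety $Y$.

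For the first leg, the projection $Z_1 \to Y$ is smooth (base change of $\wt W \to X$), and $\wt Z = Z_1 \times_Y \wt Y \to Z_1$ is the base change of $\psi$ along this smooth morphism. Flat base change applied to the identity $R\psi_*\OO_{\wt Y} = \OO_Y$ (which holds since $Y$ has rational singularities) yields $R(\wt Z \to Z_1)_* \OO_{\wt Z} = \OO_{Z_1}$. Since $\wt Z \to Z_1$ is proper surjective with equal dimensions and $\pi_*\OO = \OO$, its generic fibers are reduced of degree one, so the map is birational; hence $\wt Z \to Z_1$ is a desingularization, and $Z_1$ has rational singularities.

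The crux is the second leg $Z_1 \to Z$, which is the base change of $\phi: \wt W \to W$ along $g: Z \to W$---a map that is \emph{not} flat in general, so classical flat base change does not apply. The key is to establish Tor-independence of $\OO_{\wt W}$ and $\OO_Z$ over $\OO_W$. Working locally with $W = \spec B$, $X = \spec A$, $Y = \spec C$, and $\wt W = \spec \wt B$, flatness of $u$ gives $\OO_Z = B \otimes_A C$ with no higher $\Tor$ over $A$; associativity of the derived tensor product then yields
\[
  \Tor^B_i(\wt B,\, B \otimes_A C) \cong \Tor^A_i(\wt B, C),
\]
which vanishes for $i>0$ because $\wt B$ is $A$-flat (from smoothness of $\wt W \to X$). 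Tor-independent base change then gives $R(Z_1 \to Z)_* \OO_{Z_1} = Lg^* R\phi_* \OO_{\wt W} = \OO_Z$, using the rational singularities of $W$ to identify $R\phi_*\OO_{\wt W} = \OO_W$.

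Composing the two legs, $\wt Z \to Z$ is proper and surjective with $R(\wt Z \to Z)_*\OO_{\wt Z} = \OO_Z$; combined with $\dim \wt Z = \dim Z$ and reducedness of $Z$, this forces the generic fibers to be single reduced points, so $\wt Z \to Z$ is birational and hence a desingularization. Normality of $Z$ follows from $\wt Z$ normal together with $(\wt Z \to Z)_*\OO_{\wt Z} = \OO_Z$ (so $\OO_Z$ equals its integral closure in its function field), and the higher direct image vanishing then gives rational singularities for $Z$. The main obstacle is the Tor-independence verification in the second leg: neither the flatness of $u$ nor the smoothness of $\wt W \to X$ alone suffices---one is needed to trivialize $\OO_W \otimes^L_{\OO_X} \OO_Y$, the other to flatten $\wt B$ over $A$---and the non-flatness of $v\colon Y \to X$ precludes any direct application of classical flat base change.
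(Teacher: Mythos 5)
Your proof is correct, but it takes a genuinely different route from the paper's. The paper's argument appeals to Elkik's deformation theory of rational singularities: it observes that $\wt{W}\to W$ is a \emph{simultaneous resolution} over $X$ (i.e., fiberwise a desingularization), applies Elkik's Th\'eor\`eme~3 to deduce that every fiber $W_x$ has rational singularities, and then applies Elkik's Th\'eor\`eme~5 to the flat family $W\times_X Y\to Y$—whose fibers coincide with the $W_x$—over the base $Y$, which has rational singularities. You instead factor $\wt{W}\times_X\wt{Y}\to W\times_X Y$ through the intermediate $Z_1=\wt{W}\times_X Y$ and compute the derived pushforward of the structure sheaf along each leg directly: flat base change on the $\wt{Y}\to Y$ side (using the smooth projection $Z_1\to Y$), and Tor-independent cohomological base change on the $\wt{W}\to W$ side, with an explicit derived-tensor-product verification of Tor-independence that exploits flatness of $u$ and of $\wt{W}\to X$. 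Composing the two gives $R(\wt{Z}\to Z)_*\OO_{\wt{Z}}=\OO_Z$, from which birationality, normality, and rational singularities all follow at once. Your route avoids Elkik's theorems entirely and makes visible exactly where each hypothesis is used, at the cost of invoking the Tor-independent base change theorem; the paper's route is shorter but leans on Elkik's machinery. One small simplification you could make: birationality of $\wt{W}\times_X\wt{Y}\to W\times_X Y$ also follows directly from birationality of $\phi$ and $\psi$ together with openness of the smooth map $\wt{W}\to X$, as the paper notes, without passing through the $R(\cdot)_*\OO=\OO$ computation.
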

\begin{proof}
Since $\wt{W}$ and $\wt{Y}$ are nonsingular and $\wt{W} \to X$ is a
smooth map, $\wt{W} \times_X \wt{Y}$ is nonsingular.  Since $\phi$ and
$\psi$ are proper, so is $\phi \times \psi\colon\wt{W} \times_X \wt{Y} \to
W \times_X Y$.  Birationality follows from that of
$\phi$ and~$\psi$, using the openness of $\wt{W} \to X$.

Next we observe that $\wt{W} \to W$ is a simultaneous resolution over $X$, in the sense of \cite{Elk}; that is, the maps $\wt{W}_x \to W_x$ are desingularizations for each $x$.  Indeed, for each $x$, the map $\wt{W}_x \to W_x$ is a proper, surjective morphism of reduced varieties of the same dimension, with connected fibers (using normality of $W$ and Zariski's main theorem).  Thus it is generically bijective, and hence birational. 

Since $u\colon W \to X$ is flat, $X$ is nonsingular, $W$ has rational singularities, and $\wt{W} \to \nolinebreak W$ is a simultaneous resolution, \cite[Th\'eor\`eme~3]{Elk} says that each fiber $W_x$ has rational singularities.  Therefore, the same is true of each fiber of $W \times_X Y \to Y$.  Since $Y$ has rational singularities, it follows from \cite[Th\'eor\`eme~5]{Elk} that $W\times_X Y$ does, as well.
\end{proof}

\section{Bott--Samelson varieties}\label{s:bs}

We will need some basic facts about Bott--Samelson varieties.  With
the exception of Lemma~\ref{l:smooth} and
Proposition~\ref{p:submersion}, the following can be found in standard
references; see e.g.\ \cite[Chapter 13]{Jan} or \cite{Mag}.

Let $P_i = B s_i B \cup B$ be the minimal parabolic subgroup generated
by $B$ and $s_i$.  Let $\underline{w} =
(s_{i_1},s_{i_2},\ldots,s_{i_r})$ be a (not necessarily reduced) word
in the simple reflections.  The corresponding \emph{Bott--Samelson
variety} is
\begin{eqnarray*}
\wt{X}_{\underline{w}}
  &=& P_{i_1}\times^B P_{i_2} \times^B \cdots \times^B P_{i_r}\times^B\{pt\}
\\&=& (P_{i_1} \times P_{i_2} \times \cdots \times P_{i_r})/B^r,
\end{eqnarray*}
where $B^r$ acts by
$$%
  (b_1,b_2,\dots,b_r).(p_1,p_2,\dots,p_r) = (p_1 b_1^{-1},b_1 p_2
  b_2^{-1},\dots,b_{r-1} p_r b_r^{-1}).
$$
This is a nonsingular variety of dimension $r$, with $B$ acting by
left multiplication.  It comes with a $B$-equivariant map
$\wt{X}_{\underline{w}} \to X=G/B$, sending the class of
$(p_1,\ldots,p_r)$ to the coset $p_1 \cdots p_rB$; this map has
image~$X_w$, where $w$ is the Demazure product (obtained by using the
relations $s_i^2=s_i$ in place of $s_i^2=1$) of the reflections
$s_{i_1}, \ldots, s_{i_r}$.

When $\underline{w}$ is a reduced word for $w$, the map
$\wt{X}_{\underline{w}} \to X_w$ is a desingularization; if $w \in
W^P$ is a minimal length coset representative the same is true of the
map to $X_w \subseteq G/P$.  Fix such desingularizations by choosing a
reduced word for each $w\in W$, and simply write $\wt{X}_w$ for the
corresponding Bott--Samelson variety.

The desingularization map $\wt{X}_w \to X_w$ is an isomorphism over
the Schubert cell~$C_w$, identifying $\big\{[p_1,\ldots,p_\ell]\in
\wt{X}_w \mid p_j \not\in B \text{ for all }j\big\}$ with~$C_w$.  The
complement of~$C_w$ in $\wt X_w$ is the boundary divisor
$$%
  \del\wt X_w = \wt X_1 \cup \cdots \cup \wt X_\ell,
$$
where
$$%
  \wt X_j = \{[p_1,\ldots,p_\ell] \in \wt X_w \mid p_j\in B\}.
$$
Evidently, $\wt{X}_j$ is isomorphic to the Bott--Samelson variety
$\wt{X}_{\underline{w}(\hat\jmath)}$, where
$$%
  \underline{w}(\hat\jmath) =
  (s_{i_1},\ldots,\widehat{s_{i_j}},\ldots,s_{i_\ell});
$$
in particular, $\del\hspace{-.5pt}\wt X_{\hspace{-.51pt}w}$ is a
transverse union of smooth $B$-stable codimension~$1$
\mbox{subvarieties}.

\begin{lemma}\label{l:smooth}
The following map is smooth:
$$%
\begin{array}{rcl}
  B^- \times (P_{i_1}\times\cdots\times P_{i_r})/B^r &  \too  & G/P
\\                       \big(b,(p_1,\dots,p_r)\big) &\mapstoo& bp_1\cdots p_r P.
\end{array}
$$
\end{lemma}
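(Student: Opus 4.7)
The plan is to verify that the differential of $\mu$ is surjective at every point, which is the definition of smoothness from Section~\ref{s:rat-sings}. First I would observe that $\mu$ is equivariant for the action of $B^-$ by left multiplication on both the first factor of the source and on $G/P$, and each of these actions is by automorphisms of the underlying variety. This reduces the verification to points of the form $(1,\bar p_0)$ with $\bar p_0\in\wt X_{\underline{w}}$ arbitrary.

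Fix such a point and a lift $(p_1^0,\ldots,p_r^0)\in P_{i_1}\times\cdots\times P_{i_r}$ of $\bar p_0$. Set $g = p_1^0\cdots p_r^0$, so that $\mu(1,\bar p_0) = gP$, and identify $T_{gP}(G/P)\isom \mathfrak{g}/\mathrm{Ad}(g)\mathfrak{p}$ via left translation, where a class $X\in\mathfrak{g}/\mathrm{Ad}(g)\mathfrak{p}$ corresponds to the velocity of $t\mapstoo\exp(tX)gP$. Under this identification, varying the $b$-coordinate at $b=1$ in the direction $Y\in\mathfrak{b}^-$ gives the curve $t\mapstoo\exp(tY)gP$, contributing the subspace $\mathfrak{b}^-\subseteq\mathfrak{g}/\mathrm{Ad}(g)\mathfrak{p}$ to the image of $d\mu_{(1,\bar p_0)}$. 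Lifting tangent vectors at $\bar p_0$ through the principal $B^r$-bundle $P_{i_1}\times\cdots\times P_{i_r}\to\wt X_{\underline{w}}$ and varying only the first coordinate $p_1^0$ by $Y_1\in\mathfrak{p}_{i_1}$ (with the remaining coordinates held fixed) gives the curve $t\mapstoo\exp(tY_1)gP$, so $\mathfrak{p}_{i_1}$ also lies in the image.

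The proof then concludes with the Lie-theoretic identity $\mathfrak{g} = \mathfrak{b}^- + \mathfrak{p}_{i_1}$, which is immediate from $\mathfrak{p}_{i_1}\supseteq\mathfrak{b}$ together with $\mathfrak{g} = \mathfrak{b}^- + \mathfrak{b}$. The degenerate case of an empty word ($r=0$) is handled identically, using $\mathfrak{g}=\mathfrak{b}^- + \mathfrak{p}$ in place of the previous identity. The only real obstacle I anticipate is the bookkeeping of left versus right translations in identifying the various tangent spaces; the $B^-$-equivariance reduction to $b_0 = 1$ is what keeps this clean, because afterwards the $\mathfrak{b}^-$ contribution sits inside $\mathfrak{g}/\mathrm{Ad}(g)\mathfrak{p}$ without any adjoint twist.
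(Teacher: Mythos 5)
Your proof is correct, but it takes a different route from the paper's.  The paper establishes smoothness of the multiplication map $B^- \times P_{i_1}\times\cdots\times P_{i_q}\to G$ by induction on $q$: the base case $q=1$ is a torsor argument (the domain is a $B^-\times P_{i_1}$-torsor and the map is equivariant, so surjectivity of the differential at one point, which follows from $\fg=\mathrm{Lie}(B^-)+\mathrm{Lie}(P_{i_1})$, propagates to all points), and the inductive step factors the map as $B^-\times P_{i_1}\times\cdots\times P_{i_q}\to G\times P_{i_q}\to G$.  Only after smoothness into $G$ is established does the paper compose with $G\to G/P$ and descend to $B^-\times\wt X_{\underline w}\to G/P$.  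Your argument bypasses the induction entirely: after reducing to $b=1$ via $B^-$-equivariance, you observe directly that the $\mathfrak{b}^-$-variations and the variations of the first coordinate $p_1^0$ already sweep out $\mathfrak{b}^- + \mathfrak{p}_{i_1}=\fg$ modulo $\mathrm{Ad}(g)\pp$, so you never touch the factors $P_{i_2},\dots,P_{i_r}$.  Both arguments hinge on the same identity $\fg=\bbb^-+\pp_{i_1}$; yours is shorter and more hands-on (a one-shot tangent computation), while the paper's isolates a cleaner intermediate statement, the smoothness of $B^-\times P_{i_1}\times\cdots\times P_{i_q}\to G$, which keeps the torsor/equivariance reasoning separated from the descent to the Bott--Samelson quotient and to $G/P$.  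One small point worth making explicit in your write-up: since the $B^r$-quotient $P_{i_1}\times\cdots\times P_{i_r}\to\wt X_{\underline w}$ is a smooth surjection, the image of the differential of $B^-\times\wt X_{\underline w}\to G/P$ equals that of $B^-\times P_{i_1}\times\cdots\times P_{i_r}\to G/P$, which is what licenses computing with the lifted coordinates at all.
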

\begin{proof}
Consider, for $q \geq 1$, the map
\begin{equation}\label{multmap}
  B^- \times P_{i_1} \times \cdots \times P_{i_q} \to G
\end{equation}
given by multiplication.  When $q=1$ its differential is surjective
because of the following: the domain is a homogeneous space for $B^-
\times P_{i_1}$, with action $(b,p).(b',p')=(bb',p'p^{-1}))$; the map
is equivariant for the natural action of $B^- \times P_{i_1}$ on the
domain and target; and
$\text{Lie}(G)=\text{Lie}(B^-)+\text{Lie}(P_{i_1})$.  For $q > 1$ we
use induction.  The map~\eqref{multmap} can be written as the
composition of two multiplication maps
$$%
  B^- \times P_{i_1} \times\cdots\times P_{i_q} \too G \times P_{i_q} \too G.
$$
By induction the differential of the first map is surjective at all
points of the domain, and the second map obviously has the same
property.  It follows that \eqref{multmap} also has surjective
differential everywhere.  Upon composing \eqref{multmap} with the
projection from $G$ to $G/P$ we see that
$$%
\begin{array}{rcl}
  f\colon B^- \times P_{i_1}\times\cdots\times P_{i_q} &  \too  & G/P
\\                   \big(b,(p_1,\dots,p_q)\big) &\mapstoo& bp_1\cdots p_q P
\end{array}
$$
has surjective differential everywhere.  On the other hand $f$ factors
through the map $B^- \times \wt{X}_{w} \to G/P$, proving our
claim.
\end{proof}

The \emph{opposite Bott--Samelson varieties} $\wt{X}^{\underline{w}}$
are defined similarly.  To be precise, given a word
$\underline{w}=(s_{i_1},\ldots,s_{i_r})$ set
$$%
  \wt{X}^{\underline{w}} = P_{i_1}^-\times^{B^-} P_{i_2}^-
  \times^{B^-} \cdots \times^{B^-} P_{i_r}^-\times^{B^-}\{pt\},
$$
where $B^-$ is the opposite Borel, and the $P_i$ are the opposite
minimal parabolics.  This maps to $X=G/B$ via
$(p_{i_1},\ldots,p_{i_r}) \mapsto p_{i_1}\cdots p_{i_r} w_\circ B$.
When $\underline{w}$ is a reduced word for $ww_\circ$,
$\wt{X}^{\underline{w}} \to X^w$ is a resolution of the opposite
Schubert variety.  As before, fix desingularizations $\wt{X}^w$ for
opposite Schubert varieties by choosing reduced words.
Lemma~\ref{l:smooth} applies to opposite Bott--Samelson varieties,
exchanging $B$ and $B^-$.

Let $\wt\XX^{\underline{w}}$ be the approximate mixing space bundle
over~$\PP$ corresponding to $\wt{X}^{\underline{w}}$, and let
$\wt\XX^{\underline{w}}_J$ be its restriction to~$\PPJ$
(Section~\ref{sub:restriction}).

\begin{prop}\label{p:submersion}
The map $\Gamma \times \wt\XX^{\underline{w}}_J \xrightarrow{\wt{m}}
\XX$ is smooth.
\end{prop}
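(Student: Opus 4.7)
The plan is to exploit the characterization stated just above the proposition: a morphism between smooth varieties is smooth iff its differential is surjective at every closed point. Both source and target are smooth ($\Gamma$ is a connected algebraic group, $\wt\XX^{\underline w}_J$ is a bundle over the smooth base $\PPJ$ with smooth fiber $\wt X^{\underline w}$, and $\XX$ is a bundle over~$\PP$ with fiber $G/P$), so it suffices to check surjectivity of $d\wt m$ at each point. The map $\wt m$ is $\Gamma$-equivariant when $\Gamma$ acts on the first factor of the source by left multiplication and on~$\XX$ in the usual way, so by translation we may restrict attention to points of the form $(e,\tilde x)$, where $e\in\Gamma$ is the identity.

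Let $p\in\PPJ$ be the image of $\tilde x$ and $x=\wt m(e,\tilde x)\in\XX_p$. I will split the tangent space via the projection $\XX\to\PP$, using the short exact sequence
\begin{equation*}
  0\too T_x\XX_p\too T_x\XX\too T_p\PP\too 0,
\end{equation*}
and hit the horizontal and vertical pieces separately. For the horizontal piece, the factor $(GL_{m+1})^r\subseteq\Gamma$ acts transitively on~$\PP$, so its infinitesimal action at the identity already surjects onto $T_p\PP$.

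For the vertical piece, trivialize $\XX\to\PP$ over a neighborhood of~$p$ to identify $T_x\XX_p$ with the tangent space $T_{x'}(G/P)$ of a local lift $x'$ of $x$, and similarly lift $\tilde x$ to $\tilde x'\in\wt X^{\underline w}$. The tangent space from the Bott--Samelson factor maps via $d\pi$ to a subspace of $T_{x'}(G/P)$. The tangent directions from $\Gamma_0(\BB)$ at the identity contribute, through evaluation at~$p$, the image of $\mathrm{Lie}(\BB_p)=\mathrm{Lie}(B)$ under the infinitesimal $B$-action on $G/P$ at~$x'$; the argument of Lemma~\ref{l:sections}, which realizes $\BB$ as a group scheme built from sheaves generated by global sections, shows that the evaluation is surjective already at the Lie algebra level (not just on points). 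The opposite version of Lemma~\ref{l:smooth}, applied to the smooth multiplication map $B\times\wt X^{\underline w}\to G/P$, says precisely that $\mathrm{Lie}(B)\cdot x' + d\pi(T_{\tilde x'}\wt X^{\underline w}) = T_{x'}(G/P)$. So the vertical part is hit as well.

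The main point to verify carefully is the passage from surjectivity of the set-theoretic evaluation $\Gamma_0(\BB)\to\BB_p$ (Lemma~\ref{l:sections}) to surjectivity of the Lie algebra evaluation used above, i.e., ensuring that generation-by-sections of~$\UU$ (and trivially of~$\TT$) gives surjectivity of sections onto fibers in the linearized sense. This is where the positivity hypothesis on $\{\beta_1,\dots,\beta_r\}$ is actually being used, since it is what guarantees that each line summand $\OO(\alpha)$ of $\UU$ has enough global sections to surject onto its fiber at~$p$; the rest is a combination of Lemma~\ref{l:smooth} and transitivity of $(GL_{m+1})^r$ on~$\PP$.
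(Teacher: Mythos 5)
Your proof is correct and follows essentially the same path as the paper's: reduce to the identity via $\Gamma$-equivariance, split $T\XX$ into horizontal and vertical parts via the bundle $\XX\to\PP$, handle the horizontal part by transitivity of $(GL_{m+1})^r$ on $\PP$, and handle the vertical part by combining the opposite version of Lemma~\ref{l:smooth} for $B\times\wt X^{\underline w}\to G/P$ with surjectivity of the evaluation $\Gamma_0(\BB)\to B$ on Lie algebras. The paper packages the same block-triangular argument slightly differently, by observing that $\wt m$ is a map of fiber bundles covering the smooth map $m''\colon(GL_{m+1})^r\times\PPJ\to\PP$ with fiber map $m'\colon\Gamma_0(\BB)\times\wt X^{\underline w}\to X$, and then checking $m'$ and $m''$ are smooth; it dispatches the Lie-algebra surjectivity you carefully flag by noting that the evaluation $\Gamma_0(\BB)\to B$ is a surjective morphism of algebraic groups over $\CC$, hence has surjective differential everywhere.
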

\begin{proof} 
Since the map in question is a map of fiber bundles
\begin{diagram}
\Gamma_0(\BB)\times\wt X^{\underline{w}}
                          &\rTo&
                  \Gamma\times\wt\XX^{\underline{w}}_J&\rTo&(GL_{m+1})^r\times\PPJ\\
           \dTo_{m'}      &    &   \dTo_{\wt m}       &    &\dTo_{m''}\\
               X          &\rTo&        \XX           &\rTo&\PP,
\end{diagram}
and smoothness is local on the source and on the target, it suffices
to prove that $m'$ and~$m''$ are smooth.  It is easy to see $m''$ is
smooth: indeed, $(GL_{m+1})^r$ acts transitively on $\PP$ and $m''$ is
equivariant, so it is a locally trivial fiber bundle with smooth
fiber.

The group $\Gamma_0(\BB) = \Hom(\PP,\BB)$ acts on the fiber over
$x\in\PP$ by first evaluating at~$x$, via a surjective group
homomorphism $\Gamma_0(\BB) \to B$.  Therefore the map $m'$ factors
through $B \times \wt{X}^{\underline{w}} \to X$, and the latter map is
smooth by Lemma~\ref{l:smooth} applied to opposite Bott--Samelson and
Schubert varieties.  Since the group homomorphism from $\Gamma_0(\BB)
\rightarrow B$ has surjective differential everywhere, $m'$ is also
smooth.
\end{proof}

\begin{prop}\label{p:flat}
The map $\Gamma \times \XX^w_J \rightarrow
\XX$ is flat and has normal fibers.
\end{prop}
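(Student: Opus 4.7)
The plan is to combine the smoothness of the Bott--Samelson map $\wt m\colon \Gamma \times \wt\XX^{\underline w}_J \to \XX$ from Proposition~\ref{p:submersion} with the lifting principle in Proposition~\ref{p:rational-sings}. Write $\pi = \mathrm{id} \times \phi\colon \Gamma \times \wt\XX^{\underline w}_J \to \Gamma \times \XX^w_J$ for the proper birational desingularization induced by the Bott--Samelson resolution $\wt X^w \to X^w$, so that $\wt m = m \circ \pi$.

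For flatness, I would invoke miracle flatness. The source $\Gamma \times \XX^w_J$ is Cohen--Macaulay, since $\Gamma$ is a smooth algebraic group and $\XX^w_J$ is Cohen--Macaulay (Section~\ref{sub:restriction}), and the target $\XX$ is smooth. It therefore suffices to show that every non-empty fiber of $m$ has the expected dimension $e := \dim \Gamma + \dim \XX^w_J - \dim \XX$. Upper semi-continuity of fiber dimension on the dominant morphism $m$ between irreducible varieties gives $\dim m^{-1}(x) \geq e$ for every $x$ in the image. For the upper bound, smoothness of $\wt m$ gives $\dim \wt m^{-1}(x) = e$ (using $\dim \wt X^w = \dim X^w$ from birationality of $\phi$), and $\pi$ restricts to a proper surjection $\wt m^{-1}(x) \to m^{-1}(x)$, forcing $\dim m^{-1}(x) \leq e$.

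With flatness in hand, I would obtain normality of fibers from Proposition~\ref{p:rational-sings}, applied with $W = \Gamma \times \XX^w_J$, $\wt W = \Gamma \times \wt\XX^{\underline w}_J$, $Y = \wt Y = \{x\}$, and ambient nonsingular variety $\XX$. The variety $W$ has rational singularities as the product of the smooth $\Gamma$ with $\XX^w_J$ (a bundle over the smooth $\PPJ$ with rational-singularity fiber $X^w$); $Y$ is a point; $\wt m$ is smooth; and $m$ is flat from the previous step. The conclusion will be that each non-empty fiber $m^{-1}(x)$ has rational singularities, and hence is normal.

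The main obstacle is the remaining hypothesis of Proposition~\ref{p:rational-sings}, namely that $m$ has \emph{reduced} fibers. Serre's $S_1$ is automatic from flatness with Cohen--Macaulay source and regular target, so by Serre's criterion it remains to establish $R_0$ at each generic point of each irreducible component of $m^{-1}(x)$. I would verify this by showing that the isomorphism locus of $\pi$ (the preimage of the opposite Schubert cell $C^w$) meets every irreducible component of $\wt m^{-1}(x)$ in a dense subset, so that each component of $m^{-1}(x)$ inherits a dense smooth open subset from the corresponding smooth component of $\wt m^{-1}(x)$. By $\Gamma$-equivariance it suffices to check a single representative in each of the finitely many $\Gamma$-orbits of $\XX$ (Lemma~\ref{l:orbits}); the verification should reduce to a genericity argument combining the action of $\Gamma_0(\BB)$ on fibers with the transitivity of $(GL_{m+1})^r$ on $\PP$.
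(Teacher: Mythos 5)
Your flatness argument is essentially the paper's: reduce to a map on fibers, invoke miracle flatness (Cohen--Macaulay source, regular target, equidimensional fibers onto the open image), and bound the fiber dimension via the proper surjection from the fibers of the smooth Bott--Samelson map $\wt m$. For normality of the fibers, however, the paper takes a much more direct route. After reducing (as you do) to the action map $a\colon B\times X^w\to X$, it writes down an explicit isomorphism of the fiber over a $T$-fixed point $vP$ with $B_v\times(C_v\cap X^w)$, where $B_v=\mathrm{Stab}_B(vP)$ and $C_v\cap X^w$ is an open subset of the Richardson variety $X^w_v$. This is manifestly normal, so reducedness and normality are obtained at one stroke, with no need to pass through Proposition~\ref{p:rational-sings}.

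Your route through Proposition~\ref{p:rational-sings} (applied with $Y$ a point) is plausible in outline, but you flag the sticking point and then do not resolve it, so there is a genuine gap. That proposition takes as a \emph{hypothesis} that $m$ is flat with reduced fibers, so reducedness must be established first, and the $R_0$ part of your verification is only a sketch. What is actually needed is a dimension count: show that the part of $\wt m^{-1}(x)$ lying over the boundary $\del\wt\XX^w_J$ has strictly smaller dimension than $\wt m^{-1}(x)$ itself, so that the isomorphism locus of $\pi$ is dense and pushes down to a dense smooth open in the (pure-dimensional, CM) fiber $m^{-1}(x)$. This can be done, because each boundary component of $\wt\XX^w_J$ is again a Bott--Samelson bundle $\wt\XX^{\underline w(\hat\imath)}_J$ or a restriction $\wt\XX^w_{J'}$ over a smaller $\PPJ$, so Proposition~\ref{p:submersion} applies to each and the corresponding fibers over $x$ drop in dimension by at least one. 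But your appeal to a ``genericity argument combining the action of $\Gamma_0(\BB)$ with transitivity of $(GL_{m+1})^r$'' does not make this count, and as written the proposal leaves a hole at precisely the step you yourself identified as the main obstacle.
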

\begin{proof}
As in the proof of Proposition~\ref{p:submersion} it suffices to prove
that the action map $B \times X^w \xrightarrow{a} X$ is flat and has
normal fibers.

We begin with flatness.  Note that its image is the union of the
Schubert cells $C_v$ such that $v \geq w$; this is an open subset $U$
of $X$.  Since $U$ is nonsingular and $X^w$ is Cohen--Macaulay, by
\cite[Exercise~III.10.9]{Har} it suffices to show that the non-empty
fibers of $m$ have constant dimension equal to $\dim(B \times
X^w)-\dim(X)$.  We will now show that that the fibers of $B \times
\wt{X}^w \xrightarrow{\wt{a}} X$ map birationally to the fibers of
$a$; since $\wt{a}$ is smooth this completes the proof.  Note that the
image of the restriction $B \times C^w \rightarrow X$ of $a$ contains
$C_v$ for all $v \geq w$ since $C_v \cap C^w \neq \emptyset$ if $v
\geq w$.  Therefore every non-empty fiber of $a$ meets $B \times C^w$,
and hence is birational to the corresponding fiber of $\wt{a}$.

For normality of the fibers, observe that by $B$-equivariance, the
fibers of the action map $a$ over the points in $C_v$ are all
isomorphic to the fiber over $vP \in X$.  Write
$B_v=\text{Stab}_B(vP)$ for the stabilizer in $B$ of $vP$, and $U_v$
for the subgroup of $B$ generated by those root subgroups not
stabilizing $vP$; thus the action map gives an isomorphism $U_v \cong
C_v$.  Let $h\colon C_v \rightarrow U_v$ be the inverse isomorphism, with
$h(uvP)=u$.  Then one checks by computing its inverse that the map
$B_v \times (C_v \cap X^w) \rightarrow B \times X^w$ given by $(b,x)
\mapsto (b h(x)^{-1},x)$ is an isomorphism onto the fiber over $vP$,
\mbox{which is therefore normal}.
\end{proof}

\section{A vanishing theorem for flag bundles}\label{s:vanishing}

In this final section, we prove the vanishing theorems required to
complete the proof of Theorem~\ref{t:main}.  We need some preliminary
results.

\begin{lemma}[{\cite[Lemma, page~108]{FuPr}}]\label{l:FP}
Let $f\colon  \WW \to \XX$ be a morphism from a pure-dimensional
scheme~$\WW$ to a nonsingular variety~$\XX$, and let $\YY$ be a
Cohen--Macaulay closed subscheme of~$\XX$.  Set $Z = f^{-1}(\YY)$.
If~$\WW$ is Cohen--Macaulay and $\codim(Z,\WW) \geq \codim(\YY,\XX)$,
then equality holds and $Z$ is Cohen--Macaulay.\qed
\end{lemma}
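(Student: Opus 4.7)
The plan is to realize $Z = f^{-1}(\YY)$ as the fiber product $\WW \times_{\XX} \YY$ and exploit the fact that the diagonal of the nonsingular variety~$\XX$ is regularly embedded. Concretely, $Z$ sits inside the product $\WW \times \YY$ as the preimage of $\Delta_{\XX} \subseteq \XX \times \XX$ under the map $(f,\iota)\colon \WW \times \YY \to \XX \times \XX$, where $\iota\colon \YY \hookrightarrow \XX$ is the inclusion. Since $\XX$ is nonsingular, $\Delta_{\XX}$ is locally cut out in $\XX \times \XX$ by a regular sequence of length $\dim \XX$, so pulling back these equations cuts out $Z$ locally in $\WW \times \YY$ by $\dim \XX$ elements.

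Next I would do the dimension bookkeeping. The product $\WW \times \YY$ is Cohen--Macaulay, since both factors are. Using the pure-dimensionality of~$\WW$, the hypothesis $\codim(Z,\WW) \geq \codim(\YY,\XX) = \dim \XX - \dim \YY$ yields
$$
  \dim Z \leq \dim \WW + \dim \YY - \dim \XX,
$$
so $\codim(Z, \WW \times \YY) \geq \dim \XX$. On the other hand, imposing $\dim \XX$ local equations can drop codimension by at most $\dim \XX$, so $\codim(Z, \WW \times \YY) = \dim \XX$ exactly; this in turn forces equality in the original codimension inequality as well.

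The final step invokes a standard characterization in Cohen--Macaulay local rings: a sequence of $n$ elements whose vanishing locus has codimension exactly~$n$ is automatically a regular sequence. Applying this to our $\dim \XX$ local equations in $\OO_{\WW \times \YY}$, they form a regular sequence, and $Z$, being the zero scheme of a regular sequence inside a Cohen--Macaulay scheme, is itself Cohen--Macaulay. The principal subtlety of the argument is the codimension bookkeeping that guarantees the pullback of defining equations of $\Delta_{\XX}$ remains regular; once this is in place, Cohen--Macaulayness of $Z$ and the equality of codimensions both fall out automatically, avoiding any appeal to local complete intersection hypotheses on~$\YY$.
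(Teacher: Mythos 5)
Your argument is correct, and it is essentially the classical proof of this lemma.  The paper itself does not give a proof: the statement carries a \texttt{\textbackslash qed} and simply cites [FuPr, Lemma p.~108], so there is no in-text argument to compare against.  Your route --- identify $Z$ with the fiber product $\WW\times_\XX\YY$, view it as the preimage of the diagonal $\Delta_\XX\subseteq\XX\times\XX$ (a regularly embedded subvariety of codimension $\dim\XX$, since $\XX$ is nonsingular), pull back those $\dim\XX$ local equations to the Cohen--Macaulay scheme $\WW\times\YY$, pin down the codimension with Krull's height theorem on one side and the given inequality on the other, and then invoke unmixedness in a CM local ring to conclude the pulled-back equations form a regular sequence --- is exactly the standard Fulton--Pragacz mechanism, and all the dimension bookkeeping checks out (pure-dimensionality of $\WW$ and $\YY$ makes $\codim(Z,\WW\times\YY)=\dim\YY+\codim(Z,\WW)$, so the hypothesis gives $\geq\dim\XX$, and Krull gives $\leq\dim\XX$).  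The only background facts you lean on implicitly but should be aware of are (i) that a product of CM schemes over a field is CM (via flatness of $\WW\times\YY\to\YY$ with CM fibers), and (ii) that $\YY$ being CM (and connected, or handled componentwise) makes $\codim(\YY,\XX)$ unambiguous for the arithmetic; both are standard.  Your closing observation --- that the argument sidesteps any lci hypothesis on $\YY$ precisely because the regularity of the ambient $\XX$ is what gets exploited, via the diagonal --- is the right takeaway.
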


Recall the spaces $\mathcal{X}_J^w$ from Section~\ref{sub:restriction}
and the notation from Section~\ref{s:transverse}, and consider the diagram
\begin{equation}\label{eq:Z}
\begin{diagram}
         &           &           Z          & \rTo^\mu  &  \YY   \\
         & \ldTo^\pi &     \dInto_\iota     & \boxtimes & \dInto \\
  \Gamma &   \lTo    & \Gamma\times \XX^w_J &  \rTo_m   &  \XX
\end{diagram}
\end{equation}
in which $Z$ is defined by the fiber square.  Note that
$$%
  \pi^{-1}(\gamma) \isom \YY \cap \gamma.\XX^w_J.
$$
When this fiber is nonempty for general $\gamma\in\Gamma$, it is
nonempty for all $\gamma$, so $\pi$ is surjective.  We shall assume
surjectivity below, since all the statements are trivial if $Z$ is
empty.

\begin{lemma} \label{l:z-rational-sings}
With notation as above, if $Y$ has rational singularities then $Z$
does, too.  In particular, $Z$ is Cohen--Macaulay, so it has a
dualizing sheaf $\omega_Z$.
\end{lemma}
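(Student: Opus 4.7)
The plan is to realize $Z$ as a fibered product and apply Proposition~\ref{p:rational-sings} directly. From the diagram~\eqref{eq:Z}, we have
\[
  Z \isom (\Gamma \times \XX^w_J) \times_\XX \YY,
\]
so I set $W = \Gamma \times \XX^w_J$ and will apply Proposition~\ref{p:rational-sings} with this $W$, ambient variety~$\XX$, and $\YY$ in place of~$Y$. The ambient space $\XX$ is nonsingular, since it is a locally trivial bundle over the smooth base $\PP$ with smooth fiber $G/B$.

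Next, I verify that both $W$ and $\YY$ have rational singularities. For $\YY$, this follows from local triviality of the bundle $\YY\to\PP$ with fiber $Y$: the base is smooth and the fiber has rational singularities by hypothesis, so the total space does too. For $W = \Gamma\times\XX^w_J$, the factor $\Gamma$ is a smooth connected algebraic group (Section~\ref{s:group}), and $\XX^w_J$ is a bundle over the smooth variety $\PPJ$ with fiber $X^w$, which has rational singularities by Section~\ref{sub:flag}; hence $W$ has rational singularities. A desingularization of $W$ is furnished by $\wt{W} = \Gamma\times\wt\XX^{\underline{w}}_J$, where the Bott--Samelson desingularization $\wt{X}^{\underline{w}}\to X^w$ (Section~\ref{s:bs}) induces $\wt\XX^{\underline{w}}_J\to\XX^w_J$ via the mixing-space functor and restriction to $\PPJ$.

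The remaining hypotheses of Proposition~\ref{p:rational-sings} are supplied exactly by Propositions~\ref{p:flat} and~\ref{p:submersion}. Indeed, Proposition~\ref{p:flat} says that $W\to\XX$ is flat with normal fibers, and normality implies reducedness; Proposition~\ref{p:submersion} says precisely that the composition $\wt{W}\to W\to\XX$, namely $\Gamma\times\wt\XX^{\underline{w}}_J\to\XX$, is smooth. Applying Proposition~\ref{p:rational-sings} then concludes that $Z$ has rational singularities. In particular, $Z$ is Cohen--Macaulay and therefore admits a dualizing sheaf~$\omega_Z$.

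The main (and quite mild) obstacle is the bookkeeping: one must check that the Bott--Samelson construction behaves well under the mixing-space functor and under restriction to~$\PPJ$, so that $\wt\XX^{\underline{w}}_J\to\XX^w_J$ is genuinely a desingularization, and that forming the product with the smooth algebraic group $\Gamma$ preserves both the rational-singularities property of $W$ and the desingularization property of $\wt{W}\to W$. Once these compatibilities are noted, every other step is a direct invocation of results assembled in the preceding sections.
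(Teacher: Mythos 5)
Your proof is correct and follows essentially the same route as the paper: both realize $Z$ as the fibered product $(\Gamma\times\XX^w_J)\times_\XX\YY$ and invoke Propositions~\ref{p:submersion}, \ref{p:flat}, and~\ref{p:rational-sings}. You are merely more explicit than the paper in spelling out why $\YY$ and $\Gamma\times\XX^w_J$ themselves have rational singularities (local triviality over the smooth base $\PP$, smoothness of $\Gamma$) and in noting that normal fibers are in particular reduced, which is what Proposition~\ref{p:rational-sings} requires.
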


\begin{proof}
Let $\wt{Y} \to Y$ be
an $S$-equivariant desingularization of~$Y$, and let $\wt{X}^w \to
X^w$ be the Bott--Samelson desingularization of~$X^w$, which is also
$S$-equivariant.  Let $\phi\colon \wt\YY \to \nolinebreak \YY$ and $\psi\colon \wt\XX^w_J \to
\XX^w_J$ be the induced desingularizations of bundles, and define
notation by the diagram
\begin{equation}\label{eq:wZ}
\begin{diagram}
         &                  &          \wt Z          & \rTo^{\wt\mu} & \wt\YY\\
         & \ldTo^{\wt{\pi}} &   \dTo_{\tilde\iota}    &   \boxtimes   &  \dTo \\
  \Gamma &       \lTo       & \Gamma\times \wt\XX^w_J & \rTo_{\wt m}  &  \XX
\end{diagram}
\end{equation}
mapping to \eqref{eq:Z}.  By Propositions~\ref{p:submersion}, \ref{p:flat}, and~\ref{p:rational-sings}, the
map $f\colon \wt{Z} \to Z$ is a desingularization, and $Z$ has rational singularities.  (The
maps to $\Gamma$ do not arise until the proof of
Theorem~\ref{t:vanishing}.)  
\end{proof}

The proof of Lemma~\ref{l:z-rational-sings} also shows the following.

\begin{lemma}\label{l:dim}
For general $\gamma \in \Gamma$,
$$%
  \dim(\YY \,\cap\, \gamma.\XX^w_J) = \dim\YY - \codim\XX^w_J
  = \dim Y - \ell(w) + |J|.
$$
\end{lemma}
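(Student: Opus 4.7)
The plan is to read off the dimension count from the desingularization diagram~\eqref{eq:wZ} already constructed in the proof of Lemma~\ref{l:z-rational-sings}, since the smoothness of the ambient action map there yields strong control on fiber dimensions.

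First I would note that, by Proposition~\ref{p:submersion}, the map $\wt m\colon \Gamma \times \wt\XX^w_J \to \XX$ is smooth and hence open onto its image. We may assume this image meets $\wt\YY$, since otherwise the generic intersection $\YY \cap \gamma.\XX^w_J$ is empty and the claim is vacuously true. Consequently $\wt m$ has constant relative dimension
$$e \;=\; \dim\Gamma + \dim\wt\XX^w_J - \dim\XX$$
over its open image.

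Second, base change by $\wt\YY \to \XX$ produces $\wt\mu\colon \wt Z \to \wt\YY$, which is smooth of relative dimension $e$ onto an open subset of the irreducible variety $\wt\YY$ (here I am treating $Y$ component-by-component if necessary). In particular $\wt Z$ is equidimensional with $\dim\wt Z = \dim\wt\YY + e$. Now apply upper-semicontinuity of fiber dimension to the dominant map $\wt\pi\colon \wt Z \to \Gamma$: since $\Gamma$ is irreducible, the general fiber has dimension $\dim\wt Z - \dim\Gamma = \dim\wt\YY + \dim\wt\XX^w_J - \dim\XX$. Because $f\colon \wt Z \to Z$ is a proper birational morphism commuting with the projections to $\Gamma$, the general fibers of $\wt\pi$ and of $\pi$ share this dimension, and $\pi^{-1}(\gamma) = \YY \cap \gamma.\XX^w_J$ by construction of~$Z$.

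Finally, substituting $\dim\wt\YY = \dim Y + \dim\PP$, $\dim\wt\XX^w_J = \dim X - \ell(w) + |J|$, and $\dim\XX = \dim X + \dim\PP$ collapses the expression to $\dim Y - \ell(w) + |J|$, and the identity with $\dim\YY - \codim\XX^w_J$ is a routine rearrangement. The main subtlety to check is that $f$ does not collapse positive-dimensional loci inside generic fibers over $\Gamma$; this follows from birationality and properness of $f$ together with dominance of both $\wt\pi$ and $\pi$, so generic fibers are identified birationally.
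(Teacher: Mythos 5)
Your proof is correct and takes essentially the same route as the paper. The paper computes the general fiber dimension of $\pi\colon Z\to\Gamma$ directly as $\dim Z - \dim\Gamma$, asserting $\dim Z = \dim\YY + \dim(\Gamma\times\XX^w_J) - \dim\XX$ by appealing to what was shown in the proof of Lemma~\ref{l:z-rational-sings} (flatness of $m$ and smoothness of $\wt m$); you carry out the same count one level up on $\wt Z$ and then descend via the proper birational map $f$. One small remark: your worry that $f$ might collapse positive-dimensional loci inside generic fibers is unnecessary --- since $f$ is birational, $\dim\wt Z = \dim Z$, and applying the general-fiber dimension formula directly to the dominant map $\pi\colon Z\to\Gamma$ already yields $\dim Z - \dim\Gamma$ without any comparison between the fibers of $\pi$ and $\wt\pi$.
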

\begin{proof}
A general fiber of $\pi\colon Z \to \Gamma$ has dimension
\begin{eqnarray*}
\dim Z - \dim \Gamma
  &=& \dim \YY + \dim ( \Gamma \times \XX^w_J ) - \dim\XX - \dim\Gamma \\
  &=& \dim\YY + \dim \XX^w_J - \dim\XX \\
  &=& \dim\YY - \codim(\XX^w_J,\XX),
\end{eqnarray*}
as claimed.
\end{proof}

Since sheaf cohomology can only be nonzero in cohomological degrees
between zero and the dimension of the ambient scheme, the following
vanishing theorem places the final nails in the proof of
Theorem~\ref{t:main}.  
Parts~1 and~2 are, respectively, the statements
needed for positivity of the $c$~coefficients and the
$d$~coefficients.  Part~1 is based on the diagram \eqref{eq:Z}, where
$Z$ has a boundary divisor arising from a given boundary on $\Gamma
\times \XX^w_J$.  Part~2 simply swaps the roles of $\Gamma \times
\XX^w$ and~$\YY$: the boundary divisor on~$Z$ is pulled back from the
boundary of~$\YY_J$, which also carries the restriction to~$\PP_J$.

\begin{thm}\label{t:vanishing}
Assume the hypotheses and notation from Theorems~\ref{t:main}
and~\ref{t:strong}, including the hypothesis that\/ $Y$ has rational
singularities.  Fix a general element~\mbox{$\gamma \in \Gamma$}.  
\begin{enumerate}[1.]
\item
For all $w \in W$ and $i < \dim(\YY \cap \gamma.\XX^w_J) = \dim Y -
\ell(w) + |J|$,
$$%
  H^i\big(\YY \cap \gamma.\XX^w_J, \OO(-\del)\big) = 0.
$$
Equivalently, for all $w \in W$ and $i > 0$,
$$%
  H^i\big(\YY \cap \gamma.\XX^w_J,
  \omega_{\YY\cap\gamma.\XX^w_J}(\del)\big) = 0.
$$
\item
For all $w \in W$ and $i < \dim(\YY_J \cap \gamma.\XX^w) = \dim Y -
\ell(w) + |J|$,
$$%
  H^i\big(\YY_J \cap \gamma.\XX^w, \OO(-\del_\gamma)\big) = 0.
$$
Equivalently, for all $w \in W$ and $i > 0$,
$$%
  H^i\big(\YY_J\cap\gamma.\XX^w,\omega_{\YY_J\cap\gamma.\XX^w}(\del_\gamma)\big) = 0.
$$
\end{enumerate}
\end{thm}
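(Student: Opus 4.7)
The plan is to adapt Brion's geometric argument from \cite{brion} to the mixing-space setting, invoking Kawamata--Viehweg vanishing. I describe Part~1 in detail; Part~2 is handled by the parallel strategy after swapping the roles of the two boundaries. Since the general fiber $V_\gamma := \YY\cap\gamma.\XX^w_J$ of $\pi\colon Z\to\Gamma$ inherits rational singularities from Lemma~\ref{l:z-rational-sings}, it is in particular Cohen--Macaulay, and Serre duality on $V_\gamma$ gives the equivalence of the two displayed vanishings in Part~1.

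I next lift to a smooth desingularization by restricting $f\colon\wt Z\to Z$ from \eqref{eq:wZ} over the general point $\gamma\in\Gamma$. Generic smoothness of $\wt\pi$ guarantees that the resulting $\wt V_\gamma\to V_\gamma$ is a desingularization with $\wt V_\gamma$ smooth and projective. Rational singularities of $V_\gamma$ combined with the projection formula yield $f_*\omega_{\wt V_\gamma}(\wt\del)=\omega_{V_\gamma}(\del)$, where $\wt\del\subset\wt V_\gamma$ is the appropriate preimage boundary. Thus the problem reduces to two vanishings on $\wt V_\gamma$: the relative $R^{>0}f_*\omega_{\wt V_\gamma}(\wt\del)=0$ of Theorem~\ref{t:kv}, and the absolute $H^{>0}(\wt V_\gamma,\omega_{\wt V_\gamma}(\wt\del))=0$. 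Combining these via the Leray spectral sequence gives $H^{>0}(V_\gamma,\omega_{V_\gamma}(\del))=0$, which is the second (equivalent) form in Part~1.

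Both vanishings follow from Theorem~\ref{t:kv} once the line bundle $\OO_{\wt V_\gamma}(\wt\del)$ is shown to be (the restriction of) a nef and big bundle. Using the canonical sheaf formulas of Section~\ref{sub:bundles}, in particular $\omega_{\wt X^w}\cong\phi^* e^{\rho}\cL_{-\rho}\otimes\OO(-\del\wt X^w)$ for the opposite Bott--Samelson variety and the analogous identity for $\omega_{\wt Y}$ coming from rational singularities of $Y$, together with adjunction along the fibered product $\wt V_\gamma\cong\wt\YY\times_{\XX}\gamma.\wt\XX^w_J$, I expect $\OO_{\wt V_\gamma}(\wt\del)$ to decompose as the restriction of the tensor product of three positive contributions: the pullback of $\cL_{2\rho}$ from $G/B$ (obtained by cancellation of the two $\cL_{-\rho}$ factors from the canonical-bundle formulas), the pullback of the ample line bundle on $Y$ supplied by the hypothesis that $\del\subset Y$ supports an ample line bundle, and a nonnegative twist on each $\PP^m$-factor of the mixing-space base (nonnegativity via the positive basis hypothesis of Section~\ref{sub:full}). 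The result is nef, and is big because the ample contribution from $Y$ is strictly positive on the generic fiber~$\wt V_\gamma$.

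The main obstacle will be this last computation: collating every boundary contribution on $\wt V_\gamma$ arising from $\del\wt\XX^w$, $\del\PPJ$, and $\del\wt\YY$, checking the cancellation of the $e^{\pm\rho}$ twists, and confirming nefness and bigness uniformly in generic $\gamma$. It is exactly in order to make this uniform computation work that Proposition~\ref{p:submersion} (supplying the smooth map needed for smoothness of $\wt V_\gamma$ and for the rational-singularities lifting in Proposition~\ref{p:rational-sings}) and the ample-boundary hypothesis on $\del\subset Y$ enter. Part~2 is then proved by the same plan with the boundaries interchanged: the ample input on the $\YY$-side comes from $\del\YY$ and from the $\OO(1)$-factors on the $\PP^m$ components appearing in $\del\YY_J=(\YY|_{\del\PPJ})\cup(\del\YY)|_{\PPJ}$, while the opposite Schubert side contributes only through its canonical sheaf.
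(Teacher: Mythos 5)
Your high-level strategy coincides with the paper's: relativize over $\Gamma$ (or, equivalently after base change, work on a general fiber), invoke Kawamata--Viehweg on a Bott--Samelson-based desingularization, and push down using rational singularities. You also correctly flag the canonical-sheaf bookkeeping as the crux and correctly anticipate the roles of Propositions~\ref{p:submersion} and~\ref{p:rational-sings}.  However, your description of the nef/big source for Part~1 is wrong in a way that matters. In Part~1 the boundary on $\YY\cap\gamma.\XX^w_J$ is $\YY\cap\del(\gamma.\XX^w_J)$, i.e.\ it comes entirely from the opposite Schubert side; the ample line bundle needed for Theorem~\ref{t:kv} is supported on the Bott--Samelson boundary $\del\wt\XX^w_J$ and is supplied by Lemma~\ref{l:ample} (ultimately Example~\ref{ex:rho}, $\cL_\rho$ very ample), not by the hypothesized ample $\del\subset Y$. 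That hypothesis only enters in Part~2, where the roles are swapped and $\del\YY_J$ carries the ampleness. Your proposed three-factor decomposition mixes these two cases, and the ``cancellation of two $\cL_{-\rho}$ factors to give $\cL_{2\rho}$'' does not match the actual computation: only one $\cL_{-\rho}$ (from $\omega_{\wt X^w}$) cancels against $\omega_{\XX/\PP}^{-1}\cong\cL_{2\rho}$, leaving $e^\rho\cL_\rho$, and $\omega_{\wt\YY}$ is left intact (not decomposed).

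The bigger gap is your treatment of the pushforward. The step $f_*\omega_{\wt V_\gamma}(\wt\del)=\omega_{V_\gamma}(\del)$ is not a consequence of ``projection formula plus rational singularities'': the boundary divisor is in general only Weil, so $\OO(\wt\del)$ is not obviously $f^*\OO(\del)$ and the projection formula does not directly apply. The paper handles this differently in the two parts. For Part~1 it proves \eqref{eqn:z-tilde-dualizing}, expressing $\omega_{\wt{Z}}(\del\wt{Z})$ as the $\wt\mu$-pullback of $\omega_{\wt\YY}\otimes\phi^*e^\rho\cL_\rho(c\cdot\del\PP)$, then pushes down through the factorization $f=\phi'\circ f'$ of \eqref{eq:f-prime} (normality of $Z'$, flat base change along $\mu$, rational singularities of $\YY$) and matches against \eqref{eqn:z-dualizing}. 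For Part~2 the argument is genuinely different: $\OO_Z(\del)$ is only reflexive, and one must pass to $\cM=f^*\OO_Z(\del)/\mathit{torsion}$ and use Grothendieck duality to get $f_*(\omega_{\wt{Z}}\otimes\cM)=\omega_Z(\del)$. Your proposal collapses both into a single ``projection formula'' step, which would not survive scrutiny. You also omit the reduction from $G/P$ to $G/B$, which the paper needs to make Corollary~\ref{c:wtZvanishing} go through and handles by a separate argument via $\hat{X}=G/B\to X=G/P$ at the end of the proof.
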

\begin{proof}
The statements beginning ``Equivalently'' follow from Serre duality,
using the fact that $\YY\cap\gamma.\del\XX^w_J$ and
$\del\YY_J\cap\gamma.\XX^w$ are Cohen--Macaulay to get degeneration of
the local-to-global spectral sequence (cf.\ \cite[Lemma~4]{brion}).

The rest of the proof follows that of \cite[Theorem~3]{brion}.  We
will assume \mbox{$X = G/B$} until the very end of this section; in
fact, the entire proof works verbatim for general $G/P$ except the
verification of Corollary~\ref{c:wtZvanishing}.

Recall the notation defined by the diagram \eqref{eq:Z}.  Define the
boundary divisor
$$%
  \del Z = \YY \times_\XX (\Gamma \times \del \XX^w_J)
$$
of~$Z$.  For general $\gamma \in \Gamma$, we have
$$%
  \omega_Z(\del)|_{\pi^{-1}(\gamma)} \isom
  \omega_{\YY\cap\gamma.\XX^w_J}(\del),
$$
so it will suffice to prove that
\begin{equation}\label{eqn:z-vanishing}
  R^i \pi_*\omega_Z(\del) = 0 \quad\text{for } i > 0.
\end{equation}
We shall accomplish this by applying the Kawamata--Viehweg theorem, in
the form of Theorem~\ref{t:kv}, to the desingularization of $Z$
constructed in the proof of Lemma~\ref{l:z-rational-sings}.

Recall the diagram \eqref{eq:wZ}.  We have seen that $f\colon \wt{Z} \to Z$
is a desingularization, and $Z$ has rational singularities.  Let
$\wt\XX_1,\ldots,\wt\XX_\ell$ be the bundles over $\PP$ corresponding
to the components of the boundary divisor $\del \wt X^w = \wt X_1
\cup\cdots\cup \wt X_\ell$.

\begin{lemma}\label{l:ample}
The boundary divisor
$$%
  \del\wt\XX^w_J = \wt\XX^w|_{\del\PPJ} \cup \bigcup_{i=1}^\ell
  \wt\XX_i|_\PPJ
$$
of $\wt\XX^w_J$ supports an ample line bundle on~$\wt\XX^w_J$.
\end{lemma}
\begin{proof}
This follows in a straightforward manner from Example~\ref{ex:rho} by
pulling back very ample line bundles.  The details are omitted.
\end{proof}

The divisor in Lemma~\ref{l:ample} gives rise to a boundary divisor
$$%
  \del\wt Z = \wt\YY \times_\XX (\Gamma \times \del \wt\XX^w_J)
$$
that is a union of nonsingular irreducible divisors intersecting
transversally---that is, with normal crossings---by
Proposition~\ref{p:submersion} applied to the components of $\del
\wt\XX^w_J$ and all of their intersections, each of which is still a
Bott--Samelson fibration.

Our next goal is to prove vanishing on~$\wt{Z}$.
\begin{prop}\label{p:wtZvanishing}
$\displaystyle R^i \wt\pi_*\omega_{\wt{Z}}(\del) = 0$ for $i > 0$.
\end{prop}
\begin{proof}
For this, let $b_0\wt\XX_0 + b_1\wt\XX_1 + \cdots + b_\ell\wt\XX_\ell$
be the divisor of a very ample line bundle supported on
$\del\wt\XX^w_J = \bigcup_{i=0}^\ell \wt\XX_i$, as in
Lemma~\ref{l:ample}, and let $\wt{Z}_i = \wt\YY \times_\XX (\Gamma
\times \wt\XX_i)$.  Fix an integer~$N$ greater than every~$b_i$, and
write $a_i = N-b_i$.  Set $\cM = \OO_{\wt{Z}}(\del\wt{Z})$ and $D =
a_0 \wt{Z}_0 + \cdots + a_\ell \wt{Z}_\ell$.  Then
\begin{align*}
\cM^{\otimes N}(-D)
  &= \OO_{\wt{Z}}(b_0\wt{Z}_0 + \cdots + b_\ell\wt{Z}_\ell)
\\&= \wt\iota^*\OO_{\Gamma\times\wt\XX^w_J}\big(\Gamma \times
     (b_0\wt\XX_0+\cdots+b_\ell\wt\XX_\ell)\big)
\end{align*}
is the pullback under the map $\wt\iota$ of a very ample sheaf on
$\Gamma\times\XX^w_J$, so it is nef (i.e.,~its intersection with every
curve is nonnegative).  In particular, it is $\wt\pi$-nef and $f$-nef.
It is $\wt\pi$-big, because a general fiber $\wt\pi^{-1}(\gamma) =
\wt\YY \times \gamma.\wt\XX^w_J$ maps birationally onto its image
under $\wt\iota$.  This verifies the hypotheses of Theorem~\ref{t:kv},
whose conclusion says that $R^i\wt\pi_*(\cM\otimes\omega_{\wt{Z}})=0$
for $i > 0$, concluding the proof of the proposition.
\end{proof}

\begin{cor}\label{c:wtZvanishing}
$R^i f_*\omega_{\wt{Z}}(\del\wt{Z}) = 0$ for $i > 0$.
\end{cor}
\begin{proof}
Continuing notation as in the proof of
Proposition~\ref{p:wtZvanishing}, $\cM^{\otimes N}(-D)$ is $f$-nef and
$f$-big, the latter because $f$ is birational.
\end{proof}

The final constituent in the proof of part~1 is the following.
\begin{prop}\label{p:f-equality}
$f_*\omega_{\wt{Z}}(\del\wt{Z}) = \omega_Z(\del Z)$.
\end{prop}
\begin{proof}
Consider the factorization of $f\colon \wt{Z}\to Z$ given by
\begin{equation}
\begin{diagram}\label{eq:f-prime}
\wt{Z} & \rTo^{f'}      &  Z'         & \rTo^{\phi'} & Z        \\
\mbox{}& \rdTo_{\wt\mu} & \dTo_{\mu'} & \ \boxtimes  & \dTo_\mu \\
       &                & \wt\YY      & \rTo_\phi    & \YY
\end{diagram}
\end{equation}
in which the $\boxtimes$ denotes a fiber square.  Note that all fibers of 
the flat morphism $\mu'$ are normal, since they are the same as those of 
$m\colon \Gamma\times\XX^w_J \to \XX$, which are normal by Proposition \ref{p:flat}.  
Therefore $Z'$ is normal by \cite[Corollary 23.9]{Matsumura}.

We will establish the following:
\begin{align}
\label{eqn:z-tilde-dualizing}
\omega_{\wt{Z}}(\del\wt{Z}) &\isom
\wt\mu^*(\omega_{\wt\YY}\otimes\phi^*e^\rho\cL_\rho(c\cdot\del\PP))
\\
\label{eqn:z-dualizing}
\omega_{Z}(\del Z) &\isom
\mu^*(\omega_{\YY}\otimes e^\rho\cL_\rho(c\cdot\del\PP)),
\end{align}
where $c = (c_1,\ldots,c_r)$ is a multi-index, with
$$%
c_i =
  \begin{cases}
   m-j_i+1 &\text{if } j_i<m
  \\
   0 &\text{if } j_i = m
  \end{cases}
$$
being the coefficient of the corresponding component of~$\del\PP$, so
that $\omega_{\PP^J} \isom \OO_{\PP^J}(-c\cdot\del)$.  Granting these
isomorphisms for the moment, we have
\begin{align*}
f'_*\omega_{\wt{Z}}(\del\wt{Z})
  &= f'_* f'^* \mu'^*\left(\omega_{\wt\YY} \otimes
     \phi^*e^\rho\cL_\rho(c\cdot\del\PP)\right)
\\&= \mu'^*\left(\omega_{\wt\YY} \otimes
     \phi^*e^\rho\cL_\rho(c\cdot\del\PP)\right),
\end{align*}
using the projection formula and the fact that $f'_*\OO_{\wt{Z}} =
\OO_{Z'}$, since $Z'$ is normal.  Therefore
\begin{align*}
f_*\omega_{\wt{Z}}(\del\wt{Z})
  &= \phi'_*\mu'^*\left(\omega_{\wt\YY} \otimes
     \phi^*e^\rho\cL_\rho(c\cdot\del\PP)\right)
\\&= \mu^*\phi_*\left(\omega_{\wt\YY} \otimes
     \phi^*e^\rho\cL_\rho(c\cdot\del\PP)\right),
\end{align*}
because $\mu$ is flat (by Proposition \ref{p:flat}).  Finally, using
the projection formula, rational singularities of $\YY$,
and~\eqref{eqn:z-dualizing}, we obtain
\begin{align*}
f_*\omega_{\wt{Z}}(\del\wt{Z})
  &= \mu^*\left(\omega_\YY \otimes e^\rho\cL_\rho(c\cdot\del\PP)\right)
\\&= \omega_Z(\del Z),
\end{align*}
proving the proposition.

It remains to check \eqref{eqn:z-tilde-dualizing} and
\eqref{eqn:z-dualizing}.  The morphism $\wt\mu$ is smooth, by
\eqref{eq:wZ} and Proposition~\ref{p:submersion}.  Therefore,
$$%
  \omega_\wt{Z} \isom \wt\mu^*\omega_\wt\YY\otimes\omega_{\wt{Z}/\wt\YY}.
$$
Moreover, since the projection $\wt\XX^w_J \to \PP_J$ is a locally
trivial fibration, $\omega_{\wt\XX^w_J/\PP_J}$ is isomorphic to the
line bundle on $\wt\XX^w_J$ induced by the equivariant line bundle
$\omega_{\wt{X}^w}$ on the (non-mixing space) Bott--Samelson variety
$\wt{X}^w$.  The latter bundle is $e^{\rho}\cL_{-\rho}\otimes
\OO_{\wt{X}^w}(-\del)$,~so
\begin{eqnarray*}
\omega_{\wt\XX^w_J/\PP_J}
  &\isom&e^\rho\cL_{-\rho}\otimes\OO_{\wt\XX^w_J}(-\wt\XX_1-\cdots-\wt\XX_\ell)
\\&\isom&e^\rho\cL_{-\rho}\otimes\OO_{\wt\XX^w_J}(-\del\wt\XX^w_J+\del\PP_J)
\end{eqnarray*}

Finally, using the formula $\omega_X \isom \cL_{-2\rho}$ (here we use
$X=G/B$) and
suppressing notation for some obvious pullbacks, we have
\begin{align*}
\omega_{\wt{Z}/\wt\YY}
  &= \wt\iota^*\omega_{(\Gamma\times \wt\XX^w_J)/\XX}
\\&= \wt\iota^*\hspace{-1.5pt}\big(\omega_{\Gamma\times\wt\XX^w_J}\otimes
     \wt{m}^*\omega_\XX^{-1} \big)
\\&= \wt\iota^*\hspace{-1.5pt}\big(\omega_\Gamma \otimes
     \omega_{\wt\XX^w_J/\PP_J}\otimes\omega_{\PP_J} \otimes
     \wt{m}^*\omega_{\XX/\PP}^{-1} \otimes \wt{m}^*\omega_{\PP}^{-1}
     \big)
\\&= \wt\iota^*\hspace{-1.5pt}\big(\OO_{\Gamma}
     \hspace{-1.5pt}\otimes\hspace{-1.5pt} \wt{m}^*e^\rho\cL_{-\rho}
     \hspace{-1.5pt}\otimes\hspace{-1.5pt}
     \OO_{\wt\XX^w_J}(-\del\wt\XX^w_J\hspace{-1.5pt}
                      +\hspace{-1.5pt}\del\PP_{\!J}\hspace{-1pt})
     \hspace{-1.5pt}\otimes\hspace{-1.5pt}
     \OO_{\PP_{\!J}}(-c'\hspace{-.75ex}\cdot\hspace{-.5ex}\del\PP_{\!J}\hspace{-1pt})
     \hspace{-1.5pt}\otimes\hspace{-1.5pt}\wt{m}^*\cL_{2\rho}
     \hspace{-1.5pt}\otimes\hspace{-1.5pt}
     \wt{m}^*\OO_{\PP}(c''\hspace{-.8ex}\cdot\hspace{-.5ex}\del\PP)
     \hspace{-1pt}\big)
\\&= \wt\iota^*\hspace{-1.5pt}
     \big(\OO_{\Gamma\times\wt\XX^w_J}(-\del\wt\XX^w_J)
     \otimes \wt{m}^*e^\rho\cL_\rho \otimes
     \wt{m}^*\OO_{\PP}((1_J-c'+c'')\cdot\del\PP) \big)
\\&= \OO_{\wt{Z}}(-\del\wt{Z}) \otimes
     \wt\iota^*\wt{m}^*e^\rho\cL_\rho(c\cdot\del\PP)
\\&= \OO_{\wt{Z}}(-\del\wt{Z}) \otimes
     \wt\mu^*\phi^*e^\rho\cL_\rho(c\cdot\del\PP).
\end{align*} 
Here
\begin{eqnarray*}
c'_i &=&
  \begin{cases}
   j_i+1 &\text{if } j_i>0
  \\
   0 &\text{if } j_i = 0,
  \end{cases} \\
(1_J)_i &=&
  \begin{cases}
   1 &\text{if } j_i>0
  \\
   0 &\text{if } j_i = 0,
  \end{cases}
\end{eqnarray*}
and $c'' = (m+1,\ldots,m+1)$, so $\OO_{\PP_J}(-c'\cdot\del\PP_J) \isom
\omega_{\PP_J}$ and $\OO_{\PP}(-c''\cdot\del\PP) \isom \omega_{\PP}$.
Thus $1_J - c' + c'' = c$.

This proves \eqref{eqn:z-tilde-dualizing}.  As in Brion's proof,
\eqref{eqn:z-dualizing} is proved similarly, by restricting to the
smooth locus of the normal variety $Z$.  Thus
Proposition~\ref{p:f-equality} is proved.
\end{proof}

Proposition~\ref{p:f-equality}, together with
Proposition~\ref{p:wtZvanishing}, Corollary~\ref{c:wtZvanishing}, and
the Leray spectral sequence for $\wt\pi = \pi\circ f$,
implies~\eqref{eqn:z-vanishing}, which completes the proof of part~1.

For part~2, the restriction to $\PPJ$ now appears as~$\YY_J$ instead
of~$\XX^w_J$.  Using $\boxtimes$ to denote a pullback square, define
notation by the diagram
\begin{equation}\label{eq:Z2}
\begin{diagram}
         &           &           Z        &\rTo^{\mu\ \ }&  \YY_J \\
         & \ldTo^\pi &     \dInto_\iota   &\!\!\boxtimes & \dInto \\
  \Gamma &   \lTo    & \Gamma\times \XX^w &   \rTo_m     &  \XX
\end{diagram}
\end{equation}
and let $\del Z = \del\YY_J \times_\XX (\Gamma\times\XX^w)$.
Lemma~\ref{l:z-rational-sings} holds verbatim in this notation, with
the same proof, mutatis mutandis.  Similarly, the analogue of
Lemma~\ref{l:dim} still holds.  The proof of vanishing for part~2,
however, is somewhat different from the proof of~part~1.

Since $\mu$ is flat, we have $\OO_Z(\del Z) =
\mu^*\OO_{\YY_J}(\del\YY_J)$.  Choose (using \cite{Vil92}, say) an
$S$-equivariant resolution of singularities $\wt Y \to Y$ so that
$\del\wt\YY = \phi^{-1}\del\YY$ is a normal crossings divisor with
ideal sheaf $\II(\del\YY)\cdot\OO_{\wt\YY}$, where $\phi\colon \wt\YY \to
\YY$.  The analogue of~\eqref{eq:wZ} has the subscript $J$ on~$\wt\YY$
instead of~$\wt\XX^w$; using that notation, the divisor $\del\wt{Z} =
\del\wt\YY_J \times_\XX (\Gamma\times\wt\XX^w)$ also has normal
crossings, and $\OO_{\wt{Z}}(\del\wt{Z}) =
\wt\mu^*\OO_{\wt\YY}(\del\wt\YY)$.  Write $\del\wt{Z} =
D_1+\cdots+D_\ell$, with each $D_i$ a (nonsingular) irreducible
component.

The sheaf $\OO_Z(\del)$ is reflexive of rank $1$ on the normal
variety~$Z$.  Writing $f\colon  \wt Z \to Z$,
$$%
  \shfHom(f^*\OO_Z(-\del),\OO_{\wt{Z}}) =
  f^*\OO_Z(\del)/\mathit{torsion} =
  f^* \mu^*  \OO_{\YY_J}(\del \YY)/\mathit{torsion}
$$
is a reflexive rank $1$ sheaf~$\cM$ on the smooth variety $\wt{Z}$;
therefore it is a line bundle.  Note that $f_*\cM \isom \OO_Z(\del)$.
Since $\wt{\mu}$ is smooth and $\phi \wt{\mu}=\mu f$,
\begin{align*}
\OO_{\wt{Z}}(\del\wt{Z})&=\wt{\mu}^* \OO_{\wt\YY_J}(\phi^{-1} \del \YY)
=\wt{\mu}^* \big( (\phi^* \OO_{\YY_J}(\del \YY)) / \mathit{torsion} \big) \\
&=\big( (\phi \wt{\mu})^* \OO_{\YY_J}(\del \YY) \big)/ \mathit{torsion}=\cM.
\end{align*}
Since $Z$ has rational singularities,
$$%
  f_*(\omega_{\wt{Z}} \otimes \cM) =
  f_*\shfHom\!\big(f^*\OO_Z(-\del),\omega_{\wt{Z}}\big) = \omega_Z(\del).
$$
So it will suffice to prove
the analogues of Proposition~\ref{p:wtZvanishing} and
Corollary~\ref{c:wtZvanishing}.
\begin{lemma}
$R^i \wt\pi_*(\omega_{\wt{Z}}\otimes\cM) = 0$ for $i > 0$, and
$R^i f_*(\omega_{\wt{Z}}\otimes\cM) = 0$ for $i > 0$.
\end{lemma}
\begin{proof}
It suffices to find a divisor $D$ such that $\cM^{\otimes N}(-D)$ is
$\wt\pi$-big, $f$-big, $\wt\pi$-nef, and $f$-nef.  Fix an ample line
bundle~$\cL$ supported on~$\del\YY$.  Writing $\wt\mu^*\phi^*\cL \isom
\OO_{\wt{Z}}(b_1 D_1 +\cdots+ b_\ell D_\ell)$, let $a_i = N-b_i$ for
an integer $N$ greater than all the $b_i$'s, and let $D = a_1 D_1
+\cdots+ a_\ell D_\ell$.  Thus $\cM^{\otimes N}(-D) \isom
\wt\mu^*\phi^*\cL$.  This is nef, since it is a pullback of the ample
line bundle $\cL$; hence it is $\wt\pi$-nef and $f$-nef.  It is
$\wt\pi$-big and $f$-big for the same reasons as in part~1.  This
concludes the proof of the lemma, and with it part~2.
\end{proof}

To finish the proof of Theorem~\ref{t:vanishing}, it remains to treat
the general $G/P$ case, as opposed to the $G/B$ case we have been
assuming until now.  We proceed as in \cite[Lemma~4]{brion}.  As noted
earlier, our entire $G/B$ proof works verbatim for general $G/P$
except for the verification of Corollary~\ref{c:wtZvanishing}.  In
particular, the proof for part~2 is the same, so we may assume the
situation of part~1.

For the rest of this proof, write $X = G/P$ and $\hat{X}=G/B$, and
similarly for Schubert varieties and mixing spaces.  (Thus we have a
proper birational map $\hat{\XX}^w_J \to \XX^w_J$ of Schubert
varieties, with $w$ a maximal-length coset representative.)  Given an
$S$-invariant subvariety $Y\subseteq X$, let $\hat{Y}$ be its inverse
image in $\hat{X}$.  Note that the projection $\hat{X}\to X$ is a
locally trivial fiber bundle, with fiber $P/B$, so the same is true of
$\hat{\XX} \to \XX$, $\hat{\YY} \to \YY$, and $\hat{\YY}_J \to \YY_J$.
Define notation by the diagram
\begin{equation}\label{eq:Z-hat}
\begin{diagram}
         &           &           \hat{Z}     &\rTo^{\hat\mu\ \ }&  \hat\YY \\
         & \ldTo^{\hat\pi} &     \dInto_{\hat\iota}   &\!\!\boxtimes & \dInto \\
  \Gamma &   \lTo    & \Gamma\times \hat\XX^w_J &   \rTo_{\hat{m}}     &  \hat\XX,
\end{diagram}
\end{equation}
and let $\zeta\colon \hat{Z}\to Z$ be the induced map.  It is easy to see
that $\zeta$ is proper and birational, and in fact the resolution
$f\colon \wt{Z} \to Z$ factors as $f = \zeta\circ \hat{f}$, where
$\hat{f}\colon \wt{Z} \to \hat{Z}$ is the resolution for the $G/B$ case.
Since we know $\hat{f}_*\omega_{\wt{Z}}(\del\wt{Z}) =
\omega_{\hat{Z}}(\del\hat{Z})$, it will suffice to show that
\begin{equation}\label{z-hat-canonical}
  \zeta_*\omega_{\hat{Z}}(\del\hat{Z}) = \omega_Z(\del Z).
\end{equation}
For this, first note that $\zeta_*\OO_{\hat{Z}}=\OO_Z$, since $Z$ is
normal, and $\zeta^{-1}(\del Z) = \del\hat{Z}$ from the definitions.
Therefore $\zeta_*\OO_{\hat{Z}}(-\del) = \OO_Z(-\del)$.  Also, we have
$\zeta_*\omega_{\hat{Z}} = \omega_Z$, since $f$ and $\hat{f}$ are
rational resolutions, so $f_*\omega_{\wt{Z}} = \omega_Z$ and
$\hat{f}_*\omega_{\wt{Z}} = \omega_{\hat{Z}}$.  Now we compute:
\begin{eqnarray*}
 \zeta_*\omega_{\hat{Z}}(\del\hat{Z})
 &=& \zeta_*(\shfHom(\OO_{\hat{Z}}(-\del\hat{Z}),\omega_{\hat{Z}}) \\
 &=& \shfHom(\zeta_*\OO_{\hat{Z}}(\del\hat{Z}),\zeta_*\omega_{\hat{Z}}) \\
 &=& \shfHom(\OO_Z(\del Z), \omega_Z) \\
 &=& \omega_Z(\del Z).
\end{eqnarray*}
This proves \eqref{z-hat-canonical}, completing the proof of
Theorem~\ref{t:vanishing}.
\end{proof}

\raggedbottom


\begin{thebibliography}{EdGr98}

\bibitem[And07]{and}
Dave Anderson, \emph{Positivity in the cohomology of flag bundles (after
  Graham)}.  \textsf{arXiv: math.AG/0711.0983}

\bibitem[Bor91]{Bor}
Armand Borel, \emph{Linear Algebraic Groups}, Graduate Texts in
  Mathematics vol.~126, Springer-Verlag, 1991.

\bibitem[Bri02]{brion}
Michel Brion, \emph{Positivity in the Grothendieck group of complex flag
  varieties}, J. Algebra \textbf{258} (2002), 137--159.

\bibitem[Buc02]{buchLR}
Anders Skovsted Buch, \emph{A Littlewood--Richardson rule for the
  $K$-theory of Grassmannians}, Acta Math. \textbf{189} (2002), 37--78.

\bibitem[EdGr98]{EdGr}
Daniel Edidin and William Graham, \emph{Equivariant intersection
  theory}, Invent. Math. \textbf{131} (1998), 595--634.

\bibitem[EdGr00]{EdGrK}
Daniel Edidin and William Graham, \textit{Riemann--Roch for
  equivariant Chow groups}, Duke Math. J. \textbf{102} (2000), no.~3,
  567--594.

\bibitem[Ehr34]{Ehr}
Charles Ehresmann, \emph{Sur la topologie de certains espaces
  homog{\`{e}}nes}, Ann. of Math.~(2) \textbf{35} (1934), 396--443.

\bibitem[Elk78]{Elk}
Ren\'ee Elkik, \textit{Singularit\'es rationnelles et d\'eformations}, Invent. Math. {\bf 47} (1978), no. 2, 139--147. 

\bibitem[EsVi92]{EsVi}
H\'el\`ene Esnault and Eckart Viehweg, \emph{Lectures on vanishing
  theorems}, DMV Seminar vol.~20, Birkh\"auser, Basel, 1992.

\bibitem[Ful07]{eq}
William Fulton, \emph{Equivariant cohomology in algebraic geometry},
  lectures at Columbia University, notes by D.\ Anderson, 2007.  
  \textsf{http:/$\!$/www.math.osu.edu/\~{}anderson.2804/eilenberg}

\bibitem[FuLa94]{fulton-lascoux}
William Fulton and Alain Lascoux, \emph{A Pieri formula in the
  Grothendieck ring of a flag bundle}, Duke Math. J. \textbf{76}
  (1994), no.~3, 711--729.

\bibitem[FuPr98]{FuPr}
William Fulton and Piotr Pragacz, \emph{Schubert varieties and
  degeneracy loci}, Lecture Notes in Mathematics vol.~1689,
  Springer-Verlag, 1998.

\bibitem[Gra01]{graham}
William Graham, \emph{Positivity in equivariant Schubert calculus},
  Duke Math. J. \textbf{109} (2001), no.~3, 599--614.

\bibitem[GrKu08]{GrKu}
William Graham and Shrawan Kumar, \emph{On positivity in
  $T$-equivariant $K$-theory of flag varieties}.
  \textsf{arXiv:math.AG/0801.2776}

\bibitem[GrRa04]{GrRa}
Stephen Griffeth and Arun Ram, \textit{Affine Hecke algebras and the
  Schubert calculus}, European J. Combin. \textbf{25} (2004), no.~8,
  1263--1283.

\bibitem[Har77]{Har}
Robin Hartshorne, \emph{Algebraic geometry}. Graduate Texts in
  Mathematics vol 52, Springer-Verlag, 1977.

\bibitem[Jan87]{Jan}
Jens Carsten Jantzen, \emph{Representations of algebraic groups}, Pure 
  and Applied Mathematics vol.~131, Academic Press, Boston, 1987.

\bibitem[Kle74]{Kle}
Stephen Kleiman, \emph{The transversality of a generic translate},
  Compositio Math. \textbf{28} (1974), 287--297.

\bibitem[Mag98]{Mag}
Peter Magyar, \emph{Schubert polynomials and Bott-Samelson varieties}, 
  Comment. Math. Helv. \textbf{73} (1998), no.~4, 603--636.

\bibitem[Mat00]{Mat}
Olivier Mathieu, \emph{Positivity of some intersections in $K\sb 0(G/B)$}, 
  J. Pure Appl. Algebra \textbf{152} (2000), 231--243.

\bibitem[Mat89]{Matsumura}
Hideyuki Matsumura, \emph{Commutative ring theory}. Cambridge Studies in 
   Advanced Mathematics vol~8, Cambridge University Press, 1989.

\bibitem[Mih06]{mihalcea}
Leonardo Mihalcea, \emph{Positivity in equivariant quantum Schubert
  calculus}, Amer. J. Math. \textbf{128} (2006), no.~3, 787--803.

\bibitem[MiSp08]{torVanish}
Ezra Miller and David Speyer, \emph{A Kleiman--Bertini theorem for
  sheaf tensor products}, Journal of Algebraic Geometry \textbf{17}
  (2008), 335--340.  DOI: S 1056-3911(07)00479-1.
  \textsf{arXiv:math.AG/0601202}

\bibitem[Mum99]{Mum}
David Mumford, \emph{The red book of varieties and schemes}.  Second,
  expanded, edition.   Includes the Michigan lectures (1974) on curves
  and their Jacobians.  With contributions by Enrico Arbarello.
  Lecture Notes in Math.\ vol. 1358.  Springer-Verlag, 1999.

\bibitem[PiRa99]{pittie-ram}
Harsh Pittie and Arun Ram, \emph{A Pieri--Chevalley formula in
  the $K$-theory of a $G/B$-bundle}, Electron. Res. Announc. Amer.
   Math. Soc. \textbf{5} (1999), 102--107.

\bibitem[Ram85]{ramanathan}
Annamalai Ramanathan, \emph{Schubert varieties are arithmetically
  Cohen--Macaulay}, Invent. Math. \textbf{80} (1985), 283--294.

\bibitem[Ram87]{ramanathan2}
Annamalai Ramanathan, \emph{Equations defining Schubert varieties and 
  Frobenius splitting of diagonals}, Inst. Hautes \'Etudes Sci. Publ. Math. 
  {\bf 65} (1987), 61--90.

\bibitem[Sie07]{sierra}
Susan Sierra, \emph{A general homological Kleiman--Bertini theorem}.
  \textsf{arXiv:math.AG/0705.0055}

\bibitem[Spe88]{speiser}
Robert Speiser, \emph{Transversality theorems for families of maps},
  Algebraic geometry (Sundance, UT, 1986), 235--252, Lecture Notes in
  Math.\ vol. 1311, Springer-Verlag, 1988.

\bibitem[Tot99]{totaro}
Burt Totaro, \emph{The Chow ring of a classifying space}, Algebraic
  $K$-theory (Seattle, WA, 1997), American Mathematical Society,
  Providence, RI, 1999, pp.~249--281.

\bibitem[Vil92]{Vil92}
Orlando E. Villamayor U., \emph{Patching local uniformizations}, Ann.\
Sci.\ \'Ecole Norm.\ Sup.\ (4) \textbf{25} (1992), no.~6, 629--677.


\end{thebibliography}
\end{document}